\documentclass[a4paper,11pt]{amsart}
\title[Generalized Donaldson-Thomas invariants]{Generalized Donaldson-Thomas invariants on the local projective plane}
\date{}
\author{Yukinobu Toda}

\usepackage{amscd}
\usepackage{amsmath}
\usepackage{amssymb}
\usepackage{amsthm}
\usepackage{float}
\usepackage[dvips]{graphicx}

\usepackage[all,ps,dvips]{xy}

\usepackage{array}
\usepackage{amscd}
\usepackage[all]{xy}

\DeclareFontFamily{U}{rsfs}{%
\skewchar\font127}
\DeclareFontShape{U}{rsfs}{m}{n}{%
<-6>rsfs5<6-8.5>rsfs7<8.5->rsfs10}{}
\DeclareSymbolFont{rsfs}{U}{rsfs}{m}{n}
\DeclareSymbolFontAlphabet
{\mathrsfs}{rsfs}
\DeclareRobustCommand*\rsfs{%
\@fontswitch\relax\mathrsfs}

\theoremstyle{plain}
\newtheorem{thm}{Theorem}[section]
\newtheorem{prop}[thm]{Proposition}
\newtheorem{lem}[thm]{Lemma}

\newtheorem{defi}[thm]{Definition}
\newtheorem{rmk}[thm]{Remark}
\newtheorem{cor}[thm]{Corollary}

\newtheorem{prop-defi}[thm]{Proposition-Definition}
\newtheorem{thm-defi}[thm]{Theorem-Definition}
\newtheorem{lem-defi}[thm]{Lemma-Definition}

\newdimen\argwidth
\def\db[#1\db]{
 \setbox0=\hbox{$#1$}\argwidth=\wd0
 \setbox0=\hbox{$\left[\box0\right]$}
  \advance\argwidth by -\wd0
 \left[\kern.3\argwidth\box0 \kern.3\argwidth\right]}

\newcommand{\bB}{\mathcal{B}}

\newcommand{\hH}{\mathcal{H}}

\newcommand{\mM}{\mathcal{M}}

\newcommand{\oO}{\mathcal{O}}

\newcommand{\uU}{\mathcal{U}}

\newcommand{\xX}{\mathcal{X}}
\newcommand{\yY}{\mathcal{Y}}

\newcommand{\Hom}{\mathop{\rm Hom}\nolimits}

\newcommand{\NS}{\mathop{\rm NS}\nolimits}

\newcommand{\Pic}{\mathop{\rm Pic}\nolimits}

\newcommand{\ch}{\mathop{\rm ch}\nolimits}

\newcommand{\Ext}{\mathop{\rm Ext}\nolimits}
\newcommand{\sgn}{\mathop{\rm sgn}\nolimits}

\newcommand{\rank}{\mathop{\rm rank}\nolimits}
\newcommand{\Coh}{\mathop{\rm Coh}\nolimits}

\newcommand{\cneq}{\mathrel{\raise.095ex\hbox{:}\mkern-4.2mu=}}
\newcommand{\eqcn}{\mathrel{=\mkern-4.5mu\raise.095ex\hbox{:}}}

\newcommand{\wP}{\widehat{\mathbb{P}}^2}

\newcommand{\DT}{\mathop{\rm DT}\nolimits}
\newcommand{\Eu}{\mathop{\rm Eu}\nolimits}

\newcommand{\Imm}{\mathop{\rm Im}\nolimits}

\newcommand{\cl}{\mathop{\rm cl}\nolimits}

\begin{document}
\maketitle

\begin{abstract}
We show that the generating series of 
generalized Donaldson-Thomas invariants 
on the local projective plane with any positive 
rank is 
described in terms of 
modular forms 
and theta type series for
indefinite lattices.  
In particular it absolutely converges to give a 
holomorphic function on the upper half plane. 
\end{abstract}

\section{Introduction}
\subsection{Motivation}
Let 
\begin{align*}
\pi \colon X \to \mathbb{P}^2
\end{align*}
be the total space of the canonical line bundle on $\mathbb{P}^2$. 
The space $X$ is a non-compact Calabi-Yau 3-fold, and 
the enumerative invariants (e.g. Gromov-Witten invariants, Donaldson-Thomas invariants) on $X$ have drawn attention 
in connection with string theory. 
Among such invariants, we focus 
on the \textit{generalized Donaldson-Thomas (DT) invariants}
introduced by Thomas~\cite{Thom}, Joyce-Song~\cite{JS}
and Kontsevich-Soibelman~\cite{K-S}. 
Given an element 
\begin{align*}
(r, l, \Delta) \in \mathbb{Z}^{\oplus 3}
\end{align*}
the generalized DT
invariant
\begin{align}\label{intro:DT}
\DT(r, l, \Delta) \in \mathbb{Q}
\end{align}
counts 
 semistable\footnote{The invariant (\ref{intro:DT}) is independent of 
a choice of a stability, i.e. slope stability 
or Gieseker stability. See Lemma~\ref{lem:compare}. } sheaves $E$ on $X$ 
supported on the zero section\footnote{Indeed such sheaves
are scheme theoretically supported on the zero section. See Lemma~\ref{rmk:standard}.} of $\pi$, 
satisfying 
\begin{align}\label{intro:cond}
\rank(\pi_{\ast}E)=r, \ 
c_1(\pi_{\ast}E)=l, \ 
\Delta(\pi_{\ast}E)=\Delta. 
\end{align}
 Here $\Delta(\pi_{\ast}E)$
is the discriminant
\begin{align*}
\Delta(\pi_{\ast}E)
=l^2 -2r \ch_2(\pi_{\ast}E). 
\end{align*}
We are interested in the generating series:
\begin{align}\label{intro:gen}
\DT(r, l) \cneq \sum_{\Delta \in \mathbb{Z}_{\ge 0}}
\DT(r, l, \Delta)(-q^{\frac{1}{2r}})^{\Delta}. 
\end{align}
If $r$ and $l$ are coprime, 
then the 
series (\ref{intro:gen}) is the generating 
series of Euler numbers of 
moduli spaces of stable sheaves on $\mathbb{P}^2$, 
which 
has been explicitly computed up to rank three in several 
literatures~\cite{Got}, \cite{Kly}, \cite{YosB}, \cite{Yo1}, 
\cite{GoTheta}, \cite{Man2}, 
\cite{BrMan}, \cite{Kool}, \cite{Wei}.
If $r$ and $l$ are not coprime, 
then the definition of the 
invariant (\ref{intro:DT}) involves the logarithm 
of the moduli stack in the Hall algebra, and its
explicit computation is more subtle. 
Nevertheless there exist works~\cite{Man1}, \cite{GS1}
in which the series (\ref{intro:gen}) is studied 
for non-coprime $(r, l)$
up to rank three. 
In any case, 
the resulting closed formula is quite complicated 
even in the rank three case\footnote{For instance,
a formula in the rank three case
occupies 1.5 pages in~\cite[Section~4.3]{Kool}.}, 
and it seems hopeless to obtain a neat closed formula 
for an arbitrary rank. 

On the other hand, 
by Vafa-Witten's S-duality conjecture~\cite{VW}, the series (\ref{intro:gen})
is expected to have a certain modular invariance property. 
The computation of the series (\ref{intro:gen}) in 
the rank two case~\cite{Kly}, \cite{BrMan}
indicates that (\ref{intro:gen}) is not 
a modular form in a strict sense, but may be so 
in a broad sense including \textit{mock modular forms}~\cite{Zweg}, 
\cite{Zagier}. 
In order to approach the S-duality
 conjecture, we may not have to worry 
about the complexity of the explicit closed formula:
it is enough to know that the series (\ref{intro:gen}) is a finite
linear combination of modular forms of the same weight
in a broad sense.  
The purpose of this paper is to show that the series (\ref{intro:gen}) 
for any $r\ge 1$ is always written in terms of 
modular forms and certain theta type series for indefinite lattices,  
which converge and hopefully have a modular invariance property
in a broad sense. 

\subsection{Main result}
We construct theta type series from data
\begin{align}\label{intro:xi}
\xi=(\Gamma, B, \overline{\nu}, c_1, c_2, \cdots, c_b, c_1', 
c_2', \cdots, c_b', \alpha_1, \cdots, \alpha_k). 
\end{align}
Here $(\Gamma, B(-, -))$ is a non-degenerate lattice 
with index $(a, b)$, 
and 
$\overline{\nu}, c_i, c_i', \alpha_i$
are elements of $\Gamma_{\mathbb{Q}}$, satisfying 
certain conditions described in Subsection~\ref{subsec:theta}.  
Given data (\ref{intro:xi}), 
we construct 
the series
\begin{align}\label{intro:Theta}
&\Theta_{\xi}(q) 
\cneq \\
\notag
&\sum_{\nu \in \overline{\nu} + \Gamma}
\prod_{i=1}^{b}\left( \sgn (B(c_i, \nu))-
\sgn (B(c_i', \nu)) \right)
\prod_{j=1}^{k} B(\alpha_j, \nu) \cdot q^{Q(\nu)}. 
\end{align}
Here $Q(\nu)=B(\nu, \nu)/2$ and $\sgn(x)$
is defined by 
\begin{align*}
\sgn(x)=\left\{ \begin{array}{cl}
x/\lvert x \rvert & \text{ if }x\neq 0 \\
0 & \text{ if } x=0. 
\end{array}\right. 
\end{align*}
The series (\ref{intro:Theta})
is a generalization of known theta type series, 
and our conditions in Subsection~\ref{subsec:theta} allow us to show 
the convergence  (cf.~Lemma~\ref{lem:converge}) of (\ref{intro:Theta})
after the substitution 
 $q=e^{2\pi i \tau}$, where $\tau \in \hH \subset \mathbb{C}$
and $\hH$ is the upper half plane. 
For example if $b=k=0$, 
then $Q$ is a positive definite quadratic form on $\Gamma$, and 
the series (\ref{intro:Theta}) is nothing but the 
classical theta series. 
In this case, we call 
data (\ref{intro:xi}) as \textit{classical data}. 
If $b=1$ and $k=0$, then the series (\ref{intro:Theta})
is a mock theta series studied by Zwegers~\cite{Zweg}. 
Some more detail on the series (\ref{intro:Theta}) will 
be discussed in Subsection~\ref{subsec:theta}. 
The following is the main result in this paper: 

\begin{thm}\emph{(Corollary~\ref{cor:main}.)} \label{intro:main}
For any $r\in \mathbb{Z}_{\ge 1}$ and $l\in \mathbb{Z}$, there is 
a finite number of data $\xi_1, \cdots, \xi_n$ as in (\ref{intro:xi}),
classical data $\xi'$, $a_1, \cdots, a_n \in \mathbb{Q}$
 and $N\in \mathbb{Z}_{\ge 1}$
such that the following holds: 
\begin{align*}
\DT(r, l)
=q^{\frac{r}{8}}\eta(q)^{-3r} \cdot \Theta_{\xi'}(q)^{-1}
\cdot \left(\sum_{i=1}^{n} a_i \Theta_{\xi_i}(q^{\frac{1}{N}})  \right).
\end{align*} 
Here $\eta(q)$ is the Dedekind eta function
\begin{align}\label{eta}
\eta(q)=q^{\frac{1}{24}}\prod_{m\ge 1}(1-q^m). 
\end{align}
\end{thm}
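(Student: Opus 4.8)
The plan is to reduce the computation to the surface $\mathbb{P}^2$, to express $\DT(r,l)$ through wall-crossing as an explicit lattice sum, and finally to recognize that sum as a finite combination of the series $\Theta_\xi$ of Subsection~\ref{subsec:theta}. First I would carry out the reduction: by Lemma~\ref{rmk:standard} every semistable sheaf $E$ contributing to $\DT(r,l,\Delta)$ is scheme-theoretically supported on the zero section $i\colon \mathbb{P}^2\hookrightarrow X$, so $E=i_\ast F$ with $F$ a sheaf on $\mathbb{P}^2$ of rank $r$, first Chern class $l$, and discriminant $\Delta=l^2-2r\ch_2$. By Lemma~\ref{lem:compare} the invariant is independent of the choice of slope or Gieseker stability, and since $\Pic(\mathbb{P}^2)$ has rank one I may fix slope (Mumford) stability. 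Thus $\DT(r,l)$ is the generating series, graded by $\Delta$, of the generalized (rational) invariants of $\mu$-semistable sheaves on $\mathbb{P}^2$ with fixed $(r,l)$.

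The heart of the argument is then to compute this series by wall-crossing in the space of weak stability conditions on $D^b(\mathbb{P}^2)$. Deforming from the large-volume region, where the invariants coincide with the slope-semistable ones above, and crossing walls, the Joyce--Song and Kontsevich--Soibelman formulas~\cite{JS},~\cite{K-S} express $\DT(r,l,\Delta)$ as a finite signed sum over ordered decompositions of $(r,l)$ into rank-one classes, weighted by products of rank-one invariants. Each rank-one constituent is a line bundle twisted by an ideal sheaf of points, whose contribution, summed over the number of points, is the G\"ottsche series $\sum_n e(\Hilb^n(\mathbb{P}^2))q^n=\prod_{m\ge 1}(1-q^m)^{-3}=q^{1/8}\eta(q)^{-3}$~\cite{Got}; the $r$ constituents together produce the universal prefactor $q^{r/8}\eta(q)^{-3r}$.

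Next I would recognize the binding terms as theta series. Writing a split type as an $r$-tuple of rank-one Chern characters with fixed total $(r,l)$, the free parameters are the relative fluxes $\nu$, ranging over a coset of the lattice $\Gamma$ of differences of those characters; the Euler form induces the pairing $B$, and $Q(\nu)=\frac{1}{2}B(\nu,\nu)$ is the relative discriminant, which is \emph{indefinite} on $\Gamma$. The slope inequalities fixing an ordering of the constituents translate into the sign conditions, producing the factors $\sgn(B(c_i,\nu))-\sgn(B(c_i',\nu))$; the refinement from the stacky (rational) invariants to the integer ones, together with the $\mathbb{C}^\ast$-weights, yields the polynomial factors $\prod_j B(\alpha_j,\nu)$ and the rescaling $q^{1/N}$; and factoring out the overall center-of-mass direction accounts for the classical theta denominator $\Theta_{\xi'}(q)^{-1}$. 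Matching term by term with~(\ref{intro:Theta}) then produces finitely many data $\xi_1,\dots,\xi_n$ and the asserted identity, the finiteness of $n$ following from the boundedness of the wall-crossing kernel in a fixed total class.

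The hard part will be to show that these sums genuinely satisfy the hypotheses of Subsection~\ref{subsec:theta}, so that Lemma~\ref{lem:converge} applies. Concretely, one must verify that on the cone cut out by each system of sign conditions the indefinite form $Q$ is positive, so that the series converges, and that the $\sgn$-difference structure is of exactly the admissible form; here the Bogomolov inequality $\Delta\ge 0$ on $\mu$-semistable sheaves is the geometric input confining the fluxes to the region of positivity. Controlling this interplay between the stability chambers and the signature of $Q$ --- rather than the routine bookkeeping of the wall-crossing coefficients --- is where the real work lies.
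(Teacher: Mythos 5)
There is a genuine gap at the core of your plan: your wall-crossing takes place on $\mathbb{P}^2$ itself (``weak stability conditions on $D^b(\mathbb{P}^2)$, deforming from the large-volume region''), and there is no established formula of that kind. Since $\mathbb{P}^2$ has Picard rank one, there is no polarization to vary, and neither Joyce--Song nor Kontsevich--Soibelman provides a finite signed sum expressing the slope-semistable invariants of class $(r,l)$ as a sum over decompositions into \emph{rank-one} classes; if such a one-step formula existed, the whole problem would be essentially trivial. The paper's engine is different and is exactly what your proposal is missing: one blows up a point, $f\colon \wP\to\mathbb{P}^2$, obtaining Picard rank two and a $\mathbb{P}^1$-fibration, and then wall-crosses the polarization $H_t=f^{\ast}H-tC$ from $H_0$ toward the fiber class $F=H_1$. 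The geometric input making rank reduction possible is Lemma~\ref{lem:F}: for $t\to 1-0$ there are no $H_t$-semistable sheaves with rank $\ge 2$ and $c_1\cdot F=1$. Combined with Joyce's formula (Theorem~\ref{thm:Joy4}) this yields Proposition~\ref{prop:reduction}, expressing $\DT(r,l)$ through series $\DT(r_i,l_i)$ of \emph{strictly lower} (not rank-one) classes, and the blow-up formula of Proposition~\ref{prop:blow} (from [Tod]) transports everything back to $\mathbb{P}^2$; rank one appears only as the base of the induction, which is where G\"ottsche's formula (\ref{DT1l}) and the factor $q^{r/8}\eta(q)^{-3r}$ actually originate. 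Without the blow-up and the fiber-class vanishing your argument has no mechanism for producing any finite decomposition at all.

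Two further points would also need repair even granting some wall-crossing formula. First, you apply the Joyce--Song/KS machinery directly to the Behrend-weighted invariants, but the paper explicitly notes (Remark~\ref{rmk:CS}) that on the non-compact Calabi--Yau 3-fold $X$ the integration map (\ref{Lie}) is not known to be a Lie algebra homomorphism; this is why the paper wall-crosses the Euler-characteristic invariants $\Eu_H$ via Joyce's formula and converts to $\DT_H$ by the sign identity (\ref{DT=Eu}), valid when $K_S\cdot H<0$. Second, your dictionary into the theta data is off in several places: the polynomial factors $\prod_j B(\alpha_j,\nu)$ are not a ``refinement from stacky to integer invariants'' or $\mathbb{C}^{\ast}$-weights --- they are the Euler-pairing factors $\prod_{i\to j\text{ in }G}\chi(\gamma_i,\gamma_j)$ attached to graphs in Joyce's formula; the rescaling $q^{1/N}$ is lattice rescaling as in Lemma~\ref{lem:element}; and the denominator $\Theta_{\xi'}(q)^{-1}$ is the inverse of the blow-up theta factor $\vartheta_{r,1-l}(q)$ from Proposition~\ref{prop:blow}, not a center-of-mass factor. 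Finally, what you single out as the hard part (positivity of $Q$ on the sign-cone, so that Lemma~\ref{lem:converge} applies) is in the paper packaged once and for all in the axioms (i)--(v) of Subsection~\ref{subsec:theta}; the substantive labor is instead Proposition~\ref{prop:SU}, verifying that Joyce's combinatorial coefficients $S$ and $U$ assemble into series of exactly that admissible shape, with condition (iv) secured by Lemma~\ref{lem:F0}.
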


Since the series (\ref{intro:Theta}) 
converges, we obtain the following 
 corollary:
\begin{cor}
For any $r\in \mathbb{Z}_{\ge 1}$ and $l\in \mathbb{Z}$, 
the generating function
\begin{align*}
\sum_{\Delta\in \mathbb{Z}_{\ge 0}} \DT(r, l, \Delta)
e^{2\pi i \Delta \tau}
\end{align*}
converges absolutely on the upper half plane $\tau \in \hH \subset \mathbb{C}$.
\end{cor}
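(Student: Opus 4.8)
The plan is to deduce the corollary directly from Theorem~\ref{intro:main} together with the convergence statement of Lemma~\ref{lem:converge}, by showing that the power series underlying $\DT(r,l)$ has radius of convergence at least one. First I would match the two generating series. Choosing the branch $q^{1/(2r)}=-e^{2\pi i \tau}$, equivalently $q=e^{2\pi i\cdot 2r\tau}$, one has $(-q^{1/(2r)})^{\Delta}=e^{2\pi i \Delta \tau}$, so by (\ref{intro:gen})
\begin{align*}
\sum_{\Delta \in \mathbb{Z}_{\ge 0}} \DT(r,l,\Delta)\, e^{2\pi i \Delta \tau} = \DT(r,l)\big|_{q=e^{2\pi i\cdot 2r\tau}}.
\end{align*}
Since $2r>0$ we have $2r\tau\in\hH$ whenever $\tau\in\hH$, and $\lvert e^{2\pi i \tau}\rvert<1$; thus the assertion is equivalent to saying that the power series $\sum_{\Delta}\DT(r,l,\Delta)\,x^{\Delta}$, with $x=e^{2\pi i \tau}$, converges absolutely for every $\lvert x\rvert<1$, i.e. that its radius of convergence is at least one. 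As a power series converges absolutely in the interior of its disk of convergence, it suffices to bound this radius from below.

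Next I would substitute the formula of Theorem~\ref{intro:main} and analyze its radius of convergence factor by factor. Writing everything in a common variable $u$ with $q=u^{M}$ for a suitable $M\in\mathbb{Z}_{\ge 1}$ that is a common multiple of $2r$ and $N$, the right hand side becomes a genuine power series in $u$. The eta contribution simplifies: since $\frac{3r}{24}=\frac{r}{8}$, one has $q^{r/8}\eta(q)^{-3r}=\prod_{m\ge 1}(1-q^{m})^{-3r}$ by (\ref{eta}), an infinite product that is absolutely convergent and zero-free on $\lvert q\rvert<1$, hence a power series of radius one. Each numerator factor $\Theta_{\xi_i}(q^{1/N})$ and the series $\Theta_{\xi'}(q)$ itself converge absolutely for all $\tau\in\hH$ by Lemma~\ref{lem:converge}; translated into the variable $u$ (a positive integer power substitution) this says precisely that their expansions have radius of convergence at least one.

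The remaining, and main, point is the reciprocal $\Theta_{\xi'}(q)^{-1}$: its radius of convergence equals the distance from $u=0$ to the nearest zero of $\Theta_{\xi'}(u^{M})$ inside the unit disk, so I must show that the classical theta series $\Theta_{\xi'}$ has no zero on $\hH$. This is the crux of the argument, since without it the reciprocal could introduce a pole at some $\lvert u\rvert<1$ and the corollary would in fact fail. I would resolve it by tracing through the proof of Theorem~\ref{intro:main} to identify $\Theta_{\xi'}$ explicitly; for the local projective plane the classical datum $\xi'$ produces a one-dimensional (rank one, positive definite) theta series, which is a theta constant of Jacobi type and is therefore zero-free on the upper half plane, exactly as $\eta$ is. The possible vanishing of $\Theta_{\xi'}(u^{M})$ at $u=0$ is immaterial, since $\DT(r,l)$ is visibly a power series in non-negative powers of $u$, so any apparent pole at the cusp cancels against zeros of the numerator. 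Granting the zero-freeness on $\hH$, $\Theta_{\xi'}(u^{M})^{-1}$ has radius of convergence at least one as well. A finite product and sum of power series of radius at least one again has radius at least one, so the series defining $\DT(r,l)$, and hence $\sum_{\Delta}\DT(r,l,\Delta)\,e^{2\pi i \Delta \tau}$, converges absolutely on all of $\hH$, which is the desired conclusion.
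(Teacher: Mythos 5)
You have correctly matched the two series, reduced the corollary to a radius-of-convergence statement, and handled the factors $q^{r/8}\eta(q)^{-3r}$ and $\sum_i a_i\Theta_{\xi_i}(q^{1/N})$; this much agrees with the paper, whose entire proof is the sentence preceding the corollary (deduce it from Theorem~\ref{intro:main} and Lemma~\ref{lem:converge}). You are also right that the whole weight of the argument falls on the factor $\Theta_{\xi'}(q)^{-1}$. The gap is in your resolution of that point: the claim that $\xi'$ is rank one is not what the proof of Theorem~\ref{intro:main} yields. Tracing the induction (Corollary~\ref{cor:main} via Proposition~\ref{prop:reduction}), $\Theta_{\xi'}$ is a product of the series $\vartheta_{r',a}(q)$ with $r'\le r$, and $\vartheta_{r',a}$ is the theta series of a coset of the $A_{r'-1}$ root lattice, hence of rank $r'-1$. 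Already for $(r,l)=(3,1)$ the denominator contains $\vartheta_{3,1-l}=\vartheta_{3,0}(q)=\sum_{(k_1,k_2)\in\mathbb{Z}^2}q^{k_1^2+k_1k_2+k_2^2}$, the theta series of the hexagonal lattice.

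And positive definite theta series of rank $\ge 2$ can vanish on $\hH$; this one does. It is a weight-one form on $\Gamma_0(3)$ (with character), and $[\SL_2(\mathbb{Z}):\Gamma_0(3)]=4$, so the valence formula gives total vanishing order $1/3$; since its values at both cusps are nonzero, it must have a simple zero at the order-three elliptic point $\tau_0=(-3+i\sqrt{3})/6$, i.e. at $q_0=-e^{-\pi/\sqrt{3}}$ with $\lvert q_0\rvert<1$. (One can verify $\vartheta_{3,0}(q_0)=0$ directly by Poisson summation, or via the Borwein identity $\vartheta_{3,0}^3=\bigl(\eta(\tau)^3/\eta(3\tau)\bigr)^3+\bigl(3\eta(3\tau)^3/\eta(\tau)\bigr)^3$: the hauptmodul $(\eta(\tau)/\eta(3\tau))^{12}$ attains the value $-27$ at an interior point, and $\vartheta_{3,0}$ vanishes there.) Consequently your final step breaks down: the reciprocal $\Theta_{\xi'}(q)^{-1}$ need not have radius of convergence $\ge 1$, and to conclude one would have to show that at every interior zero of $\Theta_{\xi'}$ the numerator $\sum_i a_i\Theta_{\xi_i}(q^{1/N})$ vanishes to at least the same order — equivalently, that the right-hand side of the formula in Theorem~\ref{intro:main} defines a function with no poles on $0<\lvert q\rvert<1$. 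This cancellation is exactly what the corollary needs, and it is not established by your proposal (nor, it should be said, by the paper's one-sentence proof, which silently passes over the inversion issue you correctly identified as the crux).
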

The strategy of the proof of Theorem~\ref{intro:main}
is described below. 
Although it 
follows from a
 traditional approach, the result of Theorem~\ref{intro:main}
is a new structure result for the series (\ref{intro:gen})
with an arbitrary positive rank\footnote{In~\cite[Theorem~3.7]{Kool}, 
by the torus localization, 
Kool described the Euler numbers of moduli spaces of 
stable sheaves on $\mathbb{P}^2$ with any positive 
rank in terms of Euler numbers of certain explicit 
varieties given by GIT quotients.
Since the computation of the latter numbers is not obvious, 
his result does not imply Theorem~\ref{intro:main} even 
if $r$ and $l$ are coprime. }.

\subsection{Strategy of the proof of Theorem~\ref{intro:main}}
\label{subsec:past}
So far there have been two kinds of approaches toward
the study of the invariants (\ref{intro:DT}): 
one is to use the localization with respect to 
the torus action~\cite{Kly}, 
\cite{Kool}, \cite{Wei}, \cite{GS1}, 
and the other one is to use the blow-up 
formula and the wall-crossing formula~\cite{Yo1}, \cite{Man2}, \cite{Man1}, 
\cite{BrMan}. 
We follow the latter strategy. 
In fact, the latter one has been used to compute 
Betti numbers (rather than Euler numbers)
of moduli spaces of stable sheaves on $\mathbb{P}^2$. 
Let 
\begin{align*}
f \colon \wP \to \mathbb{P}^2
\end{align*}
 be a blow-up at a point
and $C$ the exceptional divisor of $f$. 
The
blow-up formula~\cite{Yo1}, \cite{WZ}, \cite{GoTheta}
describes  
Betti numbers of the moduli spaces of stable sheaves on $\mathbb{P}^2$
in terms of those
on $\wP$ with respect to the $f^{\ast}H$-stability
and classical theta series.  
Here $H$ is the hyperplane class of $\mathbb{P}^2$. 
Note that 
$\wP$ admits a $\mathbb{P}^1$-fibration $\wP \to \mathbb{P}^1$, 
and we denote by $F$ a fiber class. 
Let us consider a one parameter family of 
$\mathbb{R}$-divisors on $\wP$: 
\begin{align*}
H_t=f^{\ast}H -tC, \ t \in [0, 1).
\end{align*} 
The $\mathbb{R}$-divisor $H_t$ is ample for $t\in (0, 1)$. 
It is well-known that, for $t$ sufficiently close to $1$, 
there is no $H_t$-semistable sheaf $E$ on $\wP$ with $\rank(E) \ge 2$
and $c_1(E) \cdot F=1$.
This
fact together with the 
wall-crossing from $H_0$ to $H_t$ with $t\to 1-0$
enable us to describe 
Betti numbers of moduli spaces of
 $H_0$-semistable sheaves on $\wP$
in terms of those with lower rank. 
Combined with the blow-up formula, we 
can compute the desired Betti numbers on $\mathbb{P}^2$
by the induction of the rank. 
The above argument was considered by Yoshioka~\cite{Yo1}
in the rank two case, by Manschot~\cite{Man2}, \cite{Man1}
in the rank three case. 
As pointed out in~\cite{Man1}, 
the argument in principal can be applied for an arbitrary rank. 
However there are some issues to apply 
the above arguments to study the series (\ref{intro:gen}): 
\begin{itemize}
\item If $r$ and $l$ are not coprime, then a relationship 
between Betti numbers of the moduli spaces\footnote{
In this case, the moduli space is an algebraic stack, 
and its Betti numbers are interpreted as a rational function 
given by the ratio of Poincar\'e polynomials.}
and the generalized DT invariants is not yet established. 
\item To obtain a result for the series (\ref{intro:gen}) from 
the result of Betti numbers, one has to take a specialization, 
whose computation is not obvious. 
\item The wall-crossing formula is quite complicated, and it is hard
to describe the result in a general rank. 
\end{itemize}
In order to avoid the first and the second issues, 
we directly work with 
the generalized DT invariants, rather than Betti numbers.  
Instead of using the results of~\cite{Yo1}, \cite{WZ}, \cite{GoTheta}, 
we use the result of~\cite{TodS} in which a blow-up formula 
for the series (\ref{intro:gen}) was obtained by interpreting 
a blow-up of a surface as a 3-fold flop. 
As for the third issue, we 
work with Joyce's wall-crossing coefficients~\cite{Joy4} 
with respect to the polarization change from $H_0$ to $H_t$ with $t\to 1-0$ in 
detail, and extract the theta type series (\ref{intro:Theta}).

\subsection{Acknowledgment}
This work is supported by World Premier 
International Research Center Initiative
(WPI initiative), MEXT, Japan. This work is also supported by Grant-in Aid
for Scientific Research grant (No. 26287002)
from the Ministry of Education, Culture,
Sports, Science and Technology, Japan.

\section{Preliminary}
This section is devoted to a preliminary to the proof of 
Theorem~\ref{intro:main}. 
Throughout this paper, all the varieties or stacks
are defined over $\mathbb{C}$. 
\subsection{Stability conditions on local surfaces}
Let $S$ be a smooth projective surface and 
\begin{align*}
\pi \colon 
X=\omega_{S} \to
S
\end{align*}
the total space of the canonical line bundle 
on $S$. 
Note that $X$ is a non-compact Calabi-Yau 3-fold, i.e.
$\omega_X \cong \oO_X$. 
Let
\begin{align*}
\Coh_c(X) \subset \Coh(X)
\end{align*}
be the abelian category of 
coherent sheaves on $X$
whose supports are compact. 
We recall two kinds of stability conditions on 
$\Coh_c(X)$ which depend on a choice of an ample 
$\mathbb{R}$-divisor $H$ on $X$:
slope stability
condition and Gieseker
stability condition. 

The slope stability condition uses 
the following slope function for $0\neq E \in \Coh_c(X)$: 
\begin{align*}
\mu_H(E) =\frac{c_1(\pi_{\ast}E) \cdot H}{\rank(\pi_{\ast}E)} \in 
\mathbb{R} \cup \{ \infty\}. 
\end{align*}
Here we set $\mu_H(E)=\infty$ if $\rank(\pi_{\ast}E)=0$. 
\begin{defi}
A pure two dimensional sheaf $E \in \Coh_c(X)$ is $H$-slope
(semi)stable if for any 
short exact sequence $0\to F \to E \to G \to 0$ in 
$\Coh_c(X)$ with $F, G \neq 0$, 
we have 
$\mu_H(F)<(\le) \mu_H(G)$. 
\end{defi}

The Gieseker stability condition uses 
the reduced Hilbert polynomial
for $E \in \Coh_c(X)$\footnote{By the Riemann-Roch theorem, we
can formally define (\ref{overchi}) for any $\mathbb{R}$-divisor $H$. }:
\begin{align}\label{overchi}
\overline{\chi}_H(E)=
\chi(E \otimes \oO_X(mH))/a_d
\end{align}
where $a_d$ is the leading coefficient of 
$\chi(E\otimes \oO_X(mH))$. 
We write $\overline{\chi}_H(F) \prec \overline{\chi}_H(E)$
if $\overline{\chi}_H(F)<\overline{\chi}_H(E)$ for $m\gg 0$. 
\begin{defi}
A pure two dimensional sheaf 
 $E \in \Coh_c(X)$ is Gieseker (semi)stable
if for any non-zero proper subsheaf $F \subset E$, 
we have $\overline{\chi}_H(F) \prec (\preceq) \overline{\chi}_H(E)$. 
\end{defi}
We
have the obvious implications: 
\begin{align}\notag
\mbox{slope stable} &\Rightarrow \mbox{Gieseker stable} \\
\label{obvious}
&\Rightarrow \mbox{Gieseker semistable} \Rightarrow
\mbox{slope semistable}. 
\end{align}
We regard $S$ as a closed subscheme of $X$ by the zero section of $\pi$. 
In some situation, an object in $\Coh_c(X)$ is supported on $S$: 
\begin{lem}\label{rmk:standard}
Suppose that $K_S \cdot H <0$. 
Then any $H$-slope semistable sheaf $E \in \Coh_c(X)$ 
is an $\oO_S$-module. In particular, any 
pure two dimensional sheaf on $X$ is supported on $S$. 
\end{lem}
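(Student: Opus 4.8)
The plan is to exploit the standard Higgs-sheaf (spectral) description of $\Coh_c(X)$. As $\pi$ is affine with $\pi_\ast\oO_X=\Sym^\bullet(\omega_S^{-1})$, to give $E\in\Coh_c(X)$ is the same as to give a coherent sheaf $\mathcal{F}=\pi_\ast E$ on $S$ equipped with an $\oO_S$-linear field $\phi\colon\mathcal{F}\to\mathcal{F}\otimes\omega_S$, namely the action of the tautological section of $\pi^\ast\omega_S$. Under this equivalence $E$ is an $\oO_S$-module precisely when $\phi=0$; subsheaves $F\subset E$ correspond to $\phi$-invariant subsheaves $\mathcal{G}\subset\mathcal{F}$, i.e. those with $\phi(\mathcal{G})\subset\mathcal{G}\otimes\omega_S$; and the projection formula identifies $\mu_H(F)$ with the slope of $\pi_\ast F$ computed on $S$ (where $H$ is regarded as an ample class on $S$ through $\pi^\ast\colon\NS(S)\simto\NS(X)$). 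Thus the whole statement becomes a slope computation on $S$. Since the support of $E$ is proper and $\pi$ is affine, the restriction $\pi|_{\Supp E}$ is finite, so $\mathcal{F}$ is again pure two-dimensional on $S$, i.e. torsion-free of rank $r\ge1$ (we may assume $E\neq0$); in particular $\Ker\phi\subset\mathcal{F}$ and $\im\phi\subset\mathcal{F}\otimes\omega_S$ are torsion-free.

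The core is to oppose two semistability inequalities. Suppose for contradiction that $\phi\neq0$, so $\im\phi\neq0$ and $\rank(\im\phi)\ge1$. Because $\Ker\phi$ is a $\phi$-invariant subsheaf, semistability of $E$ gives $\mu_H(\Ker\phi)\le\mu_H(\mathcal{F})$ (vacuous if $\Ker\phi=0$). Feeding this into the additivity of $\rank$ and $c_1$ along $0\to\Ker\phi\to\mathcal{F}\to\im\phi\to0$, which writes $\mu_H(\mathcal{F})$ as a weighted average of $\mu_H(\Ker\phi)$ and $\mu_H(\im\phi)$ with positive weight $\rank(\im\phi)$ on the latter, I would obtain
\[
\mu_H(\im\phi)\ge\mu_H(\mathcal{F}).
\]
On the other hand, tensoring by $\pi^\ast\omega_S$ is an exact autoequivalence of $\Coh_c(X)$, so $E\otimes\pi^\ast\omega_S\cong(\mathcal{F}\otimes\omega_S,\,\phi\otimes\id)$ is again $H$-slope semistable, of slope $\mu_H(\mathcal{F})+K_S\cdot H$ by the projection formula. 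One checks that $\im\phi$ is stable under $\phi\otimes\id$, hence underlies a subobject of $E\otimes\pi^\ast\omega_S$, so that semistability yields
\[
\mu_H(\im\phi)\le\mu_H(\mathcal{F})+K_S\cdot H.
\]
Comparing the two displays gives $0\le K_S\cdot H$, contradicting $K_S\cdot H<0$. Hence $\phi=0$ and $E$ is an $\oO_S$-module.

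For the final assertion, given an arbitrary pure two-dimensional $E$ I would pass to the factors of its slope Harder--Narasimhan filtration, which are slope-semistable and therefore, by the first part, have vanishing Higgs field. In the Higgs picture this means $\phi$ strictly decreases the corresponding finite filtration of $\mathcal{F}$, so $\phi$ is nilpotent. Since the set-theoretic support of $E$ in $X$ is the spectral locus cut out by the characteristic polynomial of $\phi$, nilpotency of $\phi$ forces this support to coincide with the zero section $S$.

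I expect the main obstacle to lie in the second inequality: one must check carefully that $\im\phi$ is genuinely $(\phi\otimes\id)$-invariant, so that it corresponds to an honest subobject of $E\otimes\pi^\ast\omega_S$, and that the twist by $\pi^\ast\omega_S$ shifts the slope by exactly $K_S\cdot H$. The averaging step likewise relies on torsion-freeness of $\mathcal{F}$ to ensure $\rank(\im\phi)\ge1$ once $\phi\neq0$, which is exactly where purity of $E$ enters.
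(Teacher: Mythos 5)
Your proof is correct and is essentially the paper's argument translated into spectral-correspondence language: since $\oO_X(S)\cong\pi^{\ast}\omega_S$, your Higgs field $\phi$ is exactly the multiplication map $E(-S)\to E$ obtained by tensoring $\oO_X(-S)\subset\oO_X$ with $E$, and the paper kills this map in one stroke by noting that $E(-S)$ and $E$ are both $H$-slope semistable with $\mu_H(E(-S))=\mu_H(E)-K_S\cdot H>\mu_H(E)$ --- which is precisely the content of your two inequalities on $\Ker\phi$ and $\im\phi$, i.e.\ the unpacked proof of the standard vanishing of maps between semistable sheaves of decreasing slope. The only other difference is that you spell out the Harder--Narasimhan/nilpotency argument for the final assertion, which the paper leaves implicit in its ``in particular.''
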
 
\begin{proof}
Applying $\otimes_{\oO_X}E$ to the inclusion 
$\oO_X(-S) \subset \oO_X$, 
we obtain the map 
\begin{align*}
E(-S) \to E. 
\end{align*}
The above map is zero since 
\begin{align*}
\mu_H(E(-S))=\mu_H(E)-K_S \cdot H >\mu_H(E)
\end{align*}
and $E$, $E(-S)$ are $H$-slope semistable. 
This implies that $E$ is an $\oO_S$-module. 
(Also see~\cite[Lemma~2.1]{GS1}.)
\end{proof}

\subsection{Hall algebras}
We recall the stack theoretic Hall algebras 
of $\Coh_c(X)$ introduced by Joyce~\cite{Joy4}. 
Let $\mathfrak{M}$ be the moduli 
stack of all the objects in $\Coh_c(X)$. 
The stack theoretic Hall algebra $H(X)$ is 
$\mathbb{Q}$-spanned by the isomorphism 
classes of the symbols (cf.~\cite{Joy4})
\begin{align}\label{symbol}
[\rho \colon \xX \to \mathfrak{M}]
\end{align}
where $\xX$ is an algebraic stack of finite type with 
affine geometric stabilizers and
$\rho$ is a 1-morphism. 
The relation is generated by 
\begin{align}\label{relation}
[\rho \colon \xX \to \mathfrak{M}]
\sim [\rho|_{\yY} \colon \yY \to \mathfrak{M}]
+ [\rho|_{\uU} \colon \uU \to \mathfrak{M}]
\end{align}
where $\yY \subset \xX$ is a closed substack and 
$\uU\cneq \xX \setminus \yY$. 
There is an associative $\ast$-product
on $H(X)$
based on the Ringel-Hall algebras. 
Let $\mathfrak{Ex}$ be the 
stack of short exact sequences 
\begin{align*}
0 \to E_1 \to E_3 \to E_2 \to 0
\end{align*}
in $\Coh_c(X)$
and 
\begin{align*}
p_i \colon \mathfrak{Ex} \to 
\mathfrak{M}
\end{align*} the 
1-morphism sending 
$E_{\bullet}$ to $E_i$. 
The $\ast$-product on $H(X)$ 
is given by 
\begin{align*}
[\rho_1 \colon \xX_1 \to \mathfrak{M}]
\ast [\rho_2 \colon \xX_2 \to \mathfrak{M}]
=[\rho_3 \colon \xX_3 \to \mathfrak{M}]
\end{align*}
where 
\begin{align*}
(\xX_3, \rho_3=p_3 \circ (\rho_1', \rho_2'))
\end{align*}
 is given by
the following Cartesian diagram
\begin{align*}
\xymatrix{
\xX_3 \ar[r]^{\hspace{-5mm}(\rho_1', \rho_2')}\ar[d] \ar@{}[dr]|\square
& \mathfrak{Ex} \ar[d]^{(p_1, p_2)}
  \ar[r]^{p_3} &
\mathfrak{M} \\
\xX_1 \times \xX_2  \ar[r]^{(\rho_1, \rho_2)} 
& \mathfrak{M}^{\times 2}.
& }
\end{align*}
Let $\cl$ be the group homomorphism 
\begin{align}\label{cl}
\cl \colon K(\Coh_c(X)) \to H^{\ast}(S, \mathbb{Q})
\end{align}
defined in the following way: 
\begin{align}\label{cl2}
\cl(E)=(\rank (\pi_{\ast}E), c_1(\pi_{\ast}E), \ch_2(\pi_{\ast}E)). 
\end{align}
We denote by $\Lambda \subset H^{\ast}(S, \mathbb{Q})$
the image of 
$\cl$. We write an element 
$\gamma \in \Lambda$ 
as $(r, l, s)$ as in the RHS of (\ref{cl2}). 
For $\gamma \in \Lambda$, 
let 
\begin{align}\label{Mgamma}
\mathfrak{M}(\gamma) \subset \mathfrak{M}
\end{align}
be the substack of $E \in \Coh_c(X)$ with $\cl(E)=\gamma$.
The algebra $H(X)$ is $\Lambda$-graded
\begin{align*}
H(X)=\bigoplus_{\gamma \in \Lambda} H_{\gamma}(X)
\end{align*}
where $H_{\gamma}(X)$ is $\mathbb{Q}$-spanned by 
the symbols (\ref{symbol})
which factor through (\ref{Mgamma}). 

\subsection{Integration map}
Let $\chi$ be the pairing on $\Lambda$
given by 
\begin{align}\label{pairing}
\chi((r_1, l_1, s_1), (r_2, l_2, s_2))=
K_S(r_2 l_1 -r_1 l_2). 
\end{align}
Since $X$ is a non-compact Calabi-Yau 3-fold, 
the Serre duality 
and the Riemann-Roch theorem implies 
\begin{align*}
\chi(\cl(E_1), \cl(E_2))
=&\dim \Hom(E_1, E_2) - \dim \Ext^1(E_1, E_2) \\
&+ \dim \Ext^1(E_2, E_1)
-\dim \Hom(E_2, E_1)
\end{align*}
for $E_1, E_2 \in \Coh_{c}(X)$. 
Let $C(X)$ be the Lie algebra
\begin{align*}
C(X)=\bigoplus_{\gamma \in \Lambda}
\mathbb{Q} \cdot c_{\gamma}
\end{align*}
with bracket given by 
\begin{align}\label{bracket}
[c_{\gamma_1}, c_{\gamma_2}]=(-1)^{\chi(\gamma_1, \gamma_2)}
\chi(\gamma_1, \gamma_2) c_{\gamma_1+\gamma_2}. 
\end{align}
There is a Lie subalgebra 
\begin{align*}
H^{\rm{Lie}}(X) \subset H(X)
\end{align*}
consisting of virtual indecomposable 
objects (cf.~\cite[Section~5.2]{Joy2})
and a linear homomorphism (cf.~\cite[Theorem~5.12]{JS})
\begin{align}\label{Lie}
\Pi \colon H^{\rm{Lie}}(X) \to C(X)
\end{align}
such that if $\xX$ is a $\mathbb{C}^{\ast}$-gerb over an 
algebraic space $\xX'$, we have 
\begin{align*}
\Pi([\rho \colon \xX \to \mathfrak{M}(\gamma)])
=-\left(\sum_{k\in \mathbb{Z}}
k \cdot \chi(\nu^{-1}(k))  \right) c_{\gamma}.
\end{align*}
Here $\nu$ is  
Behrend's constructible function~\cite{Beh}
on $\xX'$. 
Moreover the map (\ref{Lie})
preserves the brackets 
for the elements $[\rho_i \colon \xX_i \to \mathfrak{M}]$
for $i=1, 2$
if $\mathfrak{M}$ is a smooth stack 
at $\rho_i(x)$ for any $x\in \xX_i$, $i=1, 2$. 
In the case we are interested in, this
condition is satisfied: 
\begin{lem}\label{rmk:smooth}
Suppose that $K_S \cdot H <0$. Then for 
any $H$-slope semistable $E \in \Coh_c(X)$, 
the stack $\mathfrak{M}$ is smooth 
at $[E]$. 
\end{lem}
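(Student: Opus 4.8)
The plan is to reduce the smoothness of $\mathfrak{M}$ at $[E]$ to the smoothness of the moduli stack of sheaves on the surface $S$, using that $E$ is scheme-theoretically supported on $S$. Write $i\colon S\hookrightarrow X$ for the zero section; it is a smooth divisor with normal bundle $\nN_{S/X}\cong K_S$. By Lemma~\ref{rmk:standard}, an $H$-slope semistable $E$ is an $\oO_S$-module, so $E=i_{\ast}F$ for some sheaf $F$ on $S$, and $F\cong \pi_{\ast}E$ is $H$-slope semistable on $S$ (the slope depends only on $\pi_{\ast}(-)$, and subobjects of $E$ in $\Coh_c(X)$ correspond under $i_{\ast}$ to subobjects of $F$, since a subsheaf of an $\oO_S$-module is an $\oO_S$-module). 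As a pure two-dimensional sheaf on a surface, $F$ is torsion-free.

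First I would establish the vanishing $\Hom_S(F,F\otimes K_S)=0$. Both $F$ and $F\otimes K_S$ are $\mu_H$-semistable, and $\mu_H(F\otimes K_S)=\mu_H(F)+K_S\cdot H<\mu_H(F)$ because $K_S\cdot H<0$. Any nonzero $\phi\colon F\to F\otimes K_S$ has image $I$ that is simultaneously a quotient of $F$ and a subsheaf of $F\otimes K_S$, forcing $\mu_H(F)\le \mu_H(I)\le \mu_H(F\otimes K_S)<\mu_H(F)$, a contradiction. By Serre duality on $S$ this gives $\Ext^2_S(F,F)\cong \Hom_S(F,F\otimes K_S)^{\vee}=0$, so the moduli stack $\mathfrak{M}_S$ of sheaves on $S$ is smooth at $[F]$.

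Next I would compare the deformation theories on $X$ and on $S$. Using the Koszul resolution of $i_{\ast}\oO_S$ together with adjunction, $\dL i^{\ast}i_{\ast}F\cong F\oplus (F\otimes K_S^{-1})[1]$, whence for every $k$
\begin{align*}
\Ext^k_X(E,E)\cong \Ext^k_S(F,F)\oplus \Ext^{k-1}_S(F,F\otimes K_S).
\end{align*}
In particular $\Ext^1_X(E,E)\cong \Ext^1_S(F,F)$ by the vanishing above, so every first-order deformation of $E$ as an $X$-sheaf is already a deformation along $S$. Here I want to stress what makes this lemma subtle: the naive obstruction space does \emph{not} vanish, since $\Ext^2_X(E,E)\cong \Ext^1_S(F,F)^{\vee}$ by Serre duality, which is nonzero whenever $[F]$ varies in a positive-dimensional family. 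Thus smoothness cannot be read off from $\Ext^2_X(E,E)=0$; it must come from the fact that the Kuranishi obstruction map vanishes, equivalently that $\mathfrak{M}$ near $[E]$ coincides with $\mathfrak{M}_S$ near $[F]$.

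To conclude I would argue that, in a neighborhood of $[E]$, every object of $\mathfrak{M}$ is a pushforward from $S$. Slope semistability is an open condition in flat families of compactly supported sheaves, so there is an open substack of $\mathfrak{M}$ containing $[E]$ all of whose points are $H$-slope semistable; by the family version of Lemma~\ref{rmk:standard} these are $\oO_S$-modules. Since $i_{\ast}\colon \Coh(S)\to \Coh_c(X)$ is fully faithful with essential image the $\oO_S$-modules, this identifies the open neighborhood of $[E]$ in $\mathfrak{M}$ with a neighborhood of $[F]$ in $\mathfrak{M}_S$, which is smooth by the second step; hence $\mathfrak{M}$ is smooth at $[E]$. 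The main obstacle is precisely this last step: upgrading the pointwise Lemma~\ref{rmk:standard} to flat families, i.e. showing that a $B$-flat family on $X$ whose geometric fibers are all $\oO_S$-modules is itself an $\oO_{S\times B}$-module and so descends to an $S$-family. I would prove this by induction over the infinitesimal neighborhoods of the base, using at each order that the obstruction to keeping the support on $S$ lies in the normal direction $\Hom_S(F,F\otimes K_S)$, which vanishes by the first step; this controls the deformation to all orders and yields the desired formal smoothness.
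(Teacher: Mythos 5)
Your proof is correct and follows essentially the same route as the paper: reduce to sheaves on $S$ via Lemma~\ref{rmk:standard} together with openness of semistability, then kill $\Ext^2_S(F,F)$ by Serre duality and the slope inequality forced by $K_S \cdot H<0$. The additional material you supply --- the Koszul computation showing $\Ext^2_X(E,E)\cong \Ext^1_S(F,F)^{\vee}$ is generally nonzero, and the order-by-order argument that deformations stay on $S$ because $\Hom_S(F,F\otimes K_S)=0$ --- is a faithful (and welcome) expansion of the step the paper compresses into the single sentence that the obstruction space ``lies in $\Ext_S^2(E,E)$''.
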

\begin{proof}
By Lemma~\ref{rmk:standard}, 
we have $E \in \Coh(S)$. 
Since the stability is an open condition, the 
obstruction space 
of the deformation theory of $E$ lies in $\Ext_{S}^2(E, E)$. 
By the Serre duality, we have
\begin{align*}
\Ext_S^2(E, E) \cong \Hom(E, E\otimes \omega_S)^{\vee}
\end{align*}
which vanishes since $E$, $E\otimes \omega_S$
are $H$-slope semistable
and $\mu_H(E)>\mu_H(E\otimes \omega_S)$
by $K_S \cdot H<0$. 
\end{proof}

\begin{rmk}\label{rmk:CS}
By the argument of~\cite[Theorem~5.12]{JS}, 
the map (\ref{Lie}) is a Lie algebra homomorphism
if we know that $\mathfrak{M}$ is 
analytically locally written as a critical locus 
of a certain holomorphic function 
in the sense of~\cite[Theorem~5.3]{JS}. 
However, since our situation is a non-compact Calabi-Yau 3-fold, 
we are not able to use~\cite[Theorem~5.12]{JS} to 
conclude that (\ref{Lie}) is a Lie algebra homomorphism. 
\end{rmk}
\subsection{Generalized DT invariants} 
For $\gamma \in \Lambda$, 
let 
\begin{align}\label{stack}
\mathfrak{M}_{H}^{s(ss)}(\gamma) \subset \mathfrak{M}(\gamma)
\end{align}
be the substack of
$H$-slope (semi)stable 
sheaves $E \in \Coh_c(X)$ satisfying
$\cl(E)=\gamma$. 
The stack (\ref{stack}) determines the element
\begin{align}\label{deltaH}
\delta_{H}(\gamma)=[\mathfrak{M}_{H}^{ss}(\gamma) \subset 
\mathfrak{M}(\gamma)] \in H_{\gamma}(X). 
\end{align}
The above element also defines the element of $H_{\gamma}(X)$\footnote{It 
is straightforward to check that (\ref{epH}) is a finite sum.}: 
\begin{align}\label{epH}
\epsilon_{H}(\gamma)
=\sum_{\begin{subarray}{c}
\gamma_1+ \cdots + \gamma_m=\gamma \\
\mu_H(\gamma_i)=\mu_H(\gamma)
\end{subarray}}
\frac{(-1)^{m-1}}{m}
\delta_{H}(\gamma_1) \ast \cdots \ast \delta_{H}(\gamma_m).
\end{align}
Here the slope $\mu_H(\gamma)$
for non-zero $\gamma=(r, l, s) \in \Lambda$ 
 is given by 
$l \cdot H/r$, i.e.
\begin{align*}
\mu_H(\cl(E))=\mu_H(E)
\end{align*}
holds 
for any non-zero $E \in \Coh_c(X)$. 
\begin{defi}\label{defi:genDT}
The generalized DT invariant
$\DT_{H}(\gamma) \in \mathbb{Q}$
is defined
by the formula:
\begin{align}\label{Pi:DT}
\Pi(\epsilon_{H}(\gamma))=-\DT_{H}(\gamma) \cdot c_{\gamma}. 
\end{align}
\end{defi}
\begin{rmk}\label{rmk:s=ss}
If $\mathfrak{M}_{H}^{s}(\gamma)=\mathfrak{M}_{H}^{ss}(\gamma)$, 
then they are $\mathbb{C}^{\ast}$-gerb over a
quasi-projective scheme $M_{H}^{s}(\gamma)$. 
In this case, the invariant $\DT_{H}(\gamma)$
is written as
\begin{align*}
\DT_{H}(\gamma)=\int_{M_{H}^{s}(\gamma)} \nu \ 
d\chi
\end{align*}
where $\nu$ is the Behrend function~\cite{Beh} on $M_H^{s}(\gamma)$. 
\end{rmk}
By formally replacing the Behrend function by the
constant function $1$ in the
construction of (\ref{Lie}), 
and removing the minus sign in (\ref{Pi:DT}), 
we obtain another invariant (cf.~\cite{Joy4}): 
\begin{align}\label{inv:Eu}
\Eu_{H}(\gamma) \in \mathbb{Q}. 
\end{align}
In the situation of Remark~\ref{rmk:s=ss}, the above 
invariant is the usual Euler number:
\begin{align*}
\Eu_{H}(\gamma)=\chi(M_{H}^{s}(\gamma)). 
\end{align*}
Also in the same situation of Lemma~\ref{rmk:standard}, the invariant
 (\ref{inv:Eu}) essentially 
coincides with the generalized DT invariant: 
\begin{lem}
Suppose that $K_S \cdot H<0$. Then we have 
the equality: 
\begin{align}\label{DT=Eu}
\DT_{H}(\gamma)=(-1)^{r^2 \chi(\oO_S)+1 + \Delta(\gamma)}
\Eu_{H}(\gamma). 
\end{align}
Here for $\gamma=(r, l, s) \in \Lambda$, 
the discriminant $\Delta(\gamma)$ is defined to be
\begin{align*}
\Delta(\gamma)=l^2 -2rs. 
\end{align*}
\end{lem}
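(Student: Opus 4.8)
The plan is to reduce the comparison to a pointwise computation of Behrend's function on a smooth moduli space. First I would use Lemma~\ref{rmk:standard} to identify, along the $H$-slope semistable locus, the objects $E\in\Coh_c(X)$ with their pushforwards $F=\pi_{\ast}E\in\Coh(S)$; since being an $\oO_S$-module is an open condition, the restriction of $\mathfrak{M}$ to the semistable locus $\mathfrak{M}_H^{ss}(\gamma)$ is identified with the moduli stack of $H$-semistable sheaves on $S$ of class $\gamma$. By (the proof of) Lemma~\ref{rmk:smooth} this stack is smooth, and in fact the argument there shows $\Ext_S^2(F,F)=0$ for every semistable $F$ (a nonzero map from a semistable sheaf to one of strictly smaller slope is impossible, and $\mu_H(F)>\mu_H(F\otimes\omega_S)$ since $K_S\cdot H<0$). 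Hence the stack is equidimensional: its dimension at every point is $\dim\Ext_S^1(F,F)-\dim\Hom_S(F,F)=-\chi_S(F,F)$, where $\chi_S(F,F)=\sum_i(-1)^i\dim\Ext_S^i(F,F)$ depends only on $\gamma$.

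The second step is the Riemann-Roch computation of this dimension. Writing $\ch(F)=(r,l,s)$ and expanding $\ch(F^{\vee})\ch(F)\td(S)$, where $\td_2(S)=\chi(\oO_S)$ by Noether's formula, I would obtain $\chi_S(F,F)=r^2\chi(\oO_S)-\Delta(\gamma)$. Therefore at a stable point the coarse moduli space satisfies $\dim M_H^s(\gamma)=\dim\Ext_S^1(F,F)=1-\chi_S(F,F)=1-r^2\chi(\oO_S)+\Delta(\gamma)$, so that $(-1)^{\dim M_H^s(\gamma)}=(-1)^{r^2\chi(\oO_S)+1+\Delta(\gamma)}$.

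Third, in the case $\mathfrak{M}_H^s(\gamma)=\mathfrak{M}_H^{ss}(\gamma)$ of Remark~\ref{rmk:s=ss}, the scheme $M_H^s(\gamma)$ is smooth, so its Behrend function is the global constant $\nu=(-1)^{\dim M_H^s(\gamma)}$. Substituting into the formula $\DT_H(\gamma)=\int_{M_H^s(\gamma)}\nu\,d\chi$ of Remark~\ref{rmk:s=ss} gives $\DT_H(\gamma)=(-1)^{\dim M_H^s(\gamma)}\chi(M_H^s(\gamma))=(-1)^{\dim M_H^s(\gamma)}\Eu_H(\gamma)$, and by Step~2 the exponent equals the claimed sign. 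This already settles the coprime case relevant to (\ref{intro:gen}).

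The main obstacle is the fourth step, the strictly semistable case, in which $\DT_H(\gamma)$ and $\Eu_H(\gamma)$ are both extracted from the \emph{same} Hall-algebra element $\epsilon_H(\gamma)$ by two integration maps that differ only by the insertion of Behrend's function. Here I would use that $\epsilon_H(\gamma)$ is supported on $\mathfrak{M}_H^{ss}(\gamma)$, since an iterated extension of semistable sheaves of the same slope is again semistable, and that by Steps~1--2 the Behrend function of the smooth stack $\mathfrak{M}$ is the global constant $(-1)^{-\chi_S(\gamma,\gamma)}$ there. The delicate point is to propagate this constancy through the definition of the map $\Pi$ in (\ref{Lie}), which is phrased via the Behrend functions of the coarse spaces $\xX'$ of the symbols rather than via $\mathfrak{M}$ directly: one must check that the automorphism/gerbe contributions (the trivial $\mathbb{C}^{\ast}$ in the stable case, and the larger stabilizers at strictly semistable points) are absorbed consistently, so that the uniform sign $(-1)^{\dim M_H^s(\gamma)}$ factors out of $\Pi(\epsilon_H(\gamma))$ relative to its Euler analogue. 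Granting this compatibility, which follows from the smoothness of $\mathfrak{M}$ together with the formalism of~\cite{Joy4} and~\cite{JS}, the two invariants differ by exactly the sign of Step~2, proving (\ref{DT=Eu}).
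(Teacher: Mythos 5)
Your proposal is correct and follows essentially the same route as the paper: smoothness of $\mathfrak{M}$ along the semistable locus (Lemma~\ref{rmk:smooth}), the Riemann--Roch computation giving $\dim\Ext_S^1(E,E)-\dim\Hom_S(E,E)=\Delta(\gamma)-r^2\chi(\oO_S)$, the resulting constancy of the Behrend function on $\mathfrak{M}_H^{ss}(\gamma)$, and the sign bookkeeping through the integration map $\Pi$. The paper's own proof is even terser than yours---it makes no case division between stable and strictly semistable classes, and the compatibility you carefully flag in your fourth step is dispatched there with the single phrase ``taking the minus sign in (\ref{Pi:DT}) into account.''
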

\begin{proof}
For any closed point $[E] \in \mathfrak{M}_{H}^{ss}(\gamma)$, 
the stack $\mathfrak{M}$ is smooth at $[E]$ by Lemma~\ref{rmk:s=ss}. 
Its dimension is  
\begin{align*}
\dim \Ext_S^1(E, E)-\dim \Hom_S(E, E) =r^2 \chi(\oO_S)-\Delta(\gamma)
\end{align*}
by the Riemann-Roch theorem. 
Hence the Behrend function of $\mathfrak{M}$ at $[E]$
is given by $(-1)^{r^2 \chi(\oO_S)-\Delta(\gamma)}$. 
Taking the minus sign in (\ref{Pi:DT}) into account, 
we obtain the desired equality. 
\end{proof}
For $r\in \mathbb{Z}_{\ge 1}$ and $l \in \NS(S)$, 
we set
\begin{align}\label{DTHrl}
\DT_H(r, l) \cneq \sum_{s} \DT_H(r, l, s) (-q^{\frac{1}{2r}})^{l^2 -2rs}. 
\end{align}
Note that if $K_S \cdot H<0$, the equality (\ref{DT=Eu})
implies
\begin{align}\label{DTEuH}
\DT_H(r, l)=(-1)^{r^2 \chi(\oO_S)+1} \sum_{s} 
\Eu_H(r, l, s) q^{\frac{l^2}{2r}-s}. 
\end{align}
If furthermore $r=1$, 
the moduli stack $\mathfrak{M}_H^{ss}(1, l, s)$
is isomorphic to the $\mathbb{C}^{\ast}$-gerb over the 
Hilbert scheme of points on $S$. 
Hence we have (cf.~\cite{Got}):
\begin{align}\label{DT1l}
\DT(1, l)=(-1)^{\chi(\oO_S)+1} q^{\frac{\chi(S)}{24}} \eta(q)^{-\chi(S)}.  
\end{align}
Here $\eta(q)$ is the Dedekind eta function (\ref{eta}). 
Also our definition of the generating series (\ref{DTHrl})
implies 
that $\DT_H(r, l)$ depends on $l$ 
only on modulo $r$: 
\begin{lem}\label{lem:modr}
For any $l' \in \mathrm{NS}(S)$, we have
\begin{align*}
\DT_H(r, l+rl')=\DT_H(r, l). 
\end{align*}
\end{lem}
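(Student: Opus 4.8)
The plan is to realize the shift $l\mapsto l+rl'$ as the effect of tensoring by a line bundle, an autoequivalence under which all the data defining $\DT_H$ is preserved. Fix a line bundle $L$ on $S$ with $c_1(L)=l'$ and let $T_L\colon \Coh_c(X)\to \Coh_c(X)$ be the exact functor $E\mapsto E\otimes \pi^{\ast}L$; it preserves $\Coh_c(X)$ since it leaves the support unchanged, and by the projection formula $\pi_{\ast}(E\otimes \pi^{\ast}L)=(\pi_{\ast}E)\otimes L$. First I would record the effect on numerical classes: multiplying $\ch(\pi_{\ast}E)=(r,l,s)$ by $\ch(L)=(1,l',(l')^2/2)$ gives, with $s'\cneq s+l\cdot l'+\frac{r}{2}(l')^2$,
\begin{align*}
\cl(T_L(E))=\left(r,\ l+rl',\ s'\right).
\end{align*}
A one-line computation then shows the discriminant is preserved,
\begin{align*}
\Delta(r,l+rl',s')=(l+rl')^2-2rs'=l^2-2rs=\Delta(r,l,s),
\end{align*}
which is exactly the invariance that will make the reindexing work.

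The second step is to see that $T_L$ respects the stability used to define the invariant. Since $\mu_H(T_L(E))=\mu_H(E)+l'\cdot H$ is a uniform shift of the slope, $T_L$ sends $H$-slope (semi)stable sheaves to $H$-slope (semi)stable sheaves and preserves inclusions of subobjects together with the ordering of their slopes. Consequently $T_L$ induces an isomorphism of moduli stacks
\begin{align*}
T_L\colon \mathfrak{M}_H^{ss}(r,l,s)\simto \mathfrak{M}_H^{ss}(r,l+rl',s').
\end{align*}

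The third step is to transport this isomorphism through the construction of $\DT_H$. As $T_L$ is an exact autoequivalence it is compatible with the $\ast$-product on $H(X)$, so it carries $\delta_H(r,l,s)$ to $\delta_H(r,l+rl',s')$ and hence, by the defining formula (\ref{epH}) whose slope conditions are preserved by the uniform shift, carries $\epsilon_H(r,l,s)$ to $\epsilon_H(r,l+rl',s')$. The pairing (\ref{pairing}) depends only on the $(r,l)$-parts and is invariant under the shift, because $r_2(l_1+r_1l')-r_1(l_2+r_2l')=r_2l_1-r_1l_2$; thus $c_{\gamma}\mapsto c_{\gamma'}$ is an isomorphism of the Lie algebra $C(X)$. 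Since the Behrend function is intrinsic to the moduli stack and therefore preserved by the isomorphism $T_L$, the integration map (\ref{Lie}) intertwines the two sides, and (\ref{Pi:DT}) gives
\begin{align*}
\DT_H(r,l,s)=\DT_H(r,l+rl',s').
\end{align*}

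Finally I would reindex the generating series (\ref{DTHrl}). Along the bijection $s\mapsto s'$ both the coefficient $\DT_H$ and the exponent $l^2-2rs=(l+rl')^2-2rs'$ are unchanged, so the two series agree term by term and $\DT_H(r,l+rl')=\DT_H(r,l)$. The one point demanding care is the penultimate step: one must verify that $T_L$ is compatible with the full Hall-algebra and integration-map definition of $\DT_H$, not merely with the underlying moduli spaces. This is where the argument could look delicate, but since $T_L$ is an exact equivalence preserving both the stability and the pairing $\chi$, the compatibility is formal and, crucially, does not require knowing that $\Pi$ is a Lie algebra homomorphism (cf.~Remark~\ref{rmk:CS}).
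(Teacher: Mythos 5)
Your proposal is correct and takes essentially the same approach as the paper: the paper's two-line proof picks $L' \in \Pic(X)$ with $c_1(L'|_{S})=l'$ and observes that $E \mapsto E \otimes L'$ preserves $\mu_H$-semistability and the discriminant $\Delta$, which is precisely your functor $T_L = (-)\otimes \pi^{\ast}L$. Your extra steps (compatibility with the Hall algebra, the pairing $\chi$, and the Behrend function) are a careful spelling-out of what the paper leaves implicit in the phrase ``the lemma follows since.''
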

\begin{proof}
Let us take $L' \in \Pic(X)$
such that $c_1(L'|_{S})=l'$. 
The lemma follows since
$E \mapsto E \otimes L'$ preserves the 
$\mu_H$-semistability and 
$\Delta(E\otimes L')=\Delta(E)$. 
\end{proof}

\subsection{Generalized DT invariants for Gieseker semistable sheaves}
\label{lem:mu=G}
One may also be interested in generalized DT invariants 
counting Gieseker semistable sheaves. 
Indeed in the situation we are interested in (i.e. 
$S=\mathbb{P}^2$), they coincide with
the invariants $\DT_H(\gamma)$ in Definition~\ref{defi:genDT}. 
Let
\begin{align*}
\mathfrak{M}_{G, H}^{ss}(\gamma) \subset \mathfrak{M}
\end{align*}
be the substack of Gieseker semistable sheaves
$E \in \Coh_c(X)$ satisfying $\cl(E)=\gamma$. 
Similarly to (\ref{deltaH}), (\ref{epH}), 
we have the following elements of $H_{\gamma}(X)$: 
\begin{align}\notag
&\delta_{G, H}(\gamma)=[\mathfrak{M}_{G, H}^{ss}(\gamma) \subset 
\mathfrak{M}(\gamma)] \\
\label{ep:GH}
&\epsilon_{G, H}(\gamma)
=\sum_{\begin{subarray}{c}
\gamma_1+ \cdots + \gamma_m=\gamma \\
\overline{\chi}_H(\gamma_i)=\overline{\chi}_H(\gamma)
\end{subarray}}
\frac{(-1)^{m-1}}{m}
\delta_{G, H}(\gamma_1) \ast \cdots \ast \delta_{G, H}(\gamma_m). 
\end{align}
Here $\overline{\chi}_H(\gamma)$ for $\gamma \in \Lambda$
is determined by the condition 
$\overline{\chi}_H(\cl(E))=\overline{\chi}_H(E)$
for any $E \in \Coh_c(X)$. 
Similarly to Definition~\ref{defi:genDT}, we 
can define the invariant
\begin{align*}
\Pi(\epsilon_{G, H}(\gamma))=-\DT_{G, H}(\gamma) \cdot c_{\gamma}. 
\end{align*}

\begin{lem}\label{lem:compare}
Suppose that $-K_S$ is ample and 
$H=-aK_S$ for $a \in \mathbb{R}_{>0}$. 
Then we have the equality
\begin{align}\label{DT:equal}
\DT_{H}(\gamma)=\DT_{G, H}(\gamma). 
\end{align}
\end{lem}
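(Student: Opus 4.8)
The plan is to reduce the statement to a computation in the Lie algebra $C(X)$, exploiting the fact that the choice $H=-aK_S$ forces the Euler pairing $\chi$ to vanish on classes of a fixed slope. First I would record that the hypotheses place us in the setting of the earlier lemmas: since $-K_S$ is ample and $H=-aK_S$ with $a>0$, we have $K_S\cdot H=-aK_S^2<0$, so by Lemma~\ref{rmk:standard} every $H$-slope semistable (hence, by (\ref{obvious}), every Gieseker semistable) sheaf in $\Coh_c(X)$ is an $\oO_S$-module, and by Lemma~\ref{rmk:smooth} the stack $\mathfrak{M}$ is smooth at each such point. Consequently the map $\Pi$ of (\ref{Lie}) preserves Lie brackets on the elements built out of $\delta_H(\gamma)$ and $\delta_{G,H}(\gamma)$, and by Definition~\ref{defi:genDT} together with (\ref{Pi:DT}) it suffices to prove the equality $\Pi(\epsilon_H(\gamma))=\Pi(\epsilon_{G,H}(\gamma))$ in $C(X)$.

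The crucial observation is the following vanishing. Suppose $\gamma_i=(r_i,l_i,s_i)\in\Lambda$ for $i=1,2$ have the same slope $\mu_H(\gamma_1)=\mu_H(\gamma_2)=\mu$, i.e. $l_i\cdot H=r_i\mu$. Writing $K_S=-H/a$ in the pairing (\ref{pairing}) gives
\begin{align*}
\chi(\gamma_1,\gamma_2)=K_S\cdot(r_2 l_1-r_1 l_2)=-\frac{1}{a}\bigl(r_2(l_1\cdot H)-r_1(l_2\cdot H)\bigr)=-\frac{1}{a}(r_2 r_1\mu-r_1 r_2\mu)=0.
\end{align*}
Hence, by the definition (\ref{bracket}) of the bracket, $[c_{\gamma_1},c_{\gamma_2}]=0$ whenever $\gamma_1,\gamma_2$ have equal slope; in other words, for each fixed $\mu$ the elements $c_{\gamma}$ with $\mu_H(\gamma)=\mu$ span an abelian Lie subalgebra of $C(X)$. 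This is precisely the point at which the hypothesis $H=-aK_S$ enters, and it is what makes the two stabilities interchangeable.

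Finally I would compare the two invariants through Joyce's change of stability condition. Since Gieseker semistability implies slope semistability, a slope semistable sheaf decomposes, via its Harder--Narasimhan filtration for Gieseker stability, into Gieseker semistable factors all of the same slope $\mu_H$; this is the input that relates $\delta_H$ and $\delta_{G,H}$, hence $\epsilon_H(\gamma)$ in (\ref{epH}) and $\epsilon_{G,H}(\gamma)$ in (\ref{ep:GH}). Applying the Lie algebra homomorphism $\Pi$ to Joyce's wall-crossing formula \cite{Joy4}, one expresses $\Pi(\epsilon_{G,H}(\gamma))$ as a sum over decompositions $\gamma=\gamma_1+\cdots+\gamma_n$ of iterated brackets of the $\Pi(\epsilon_H(\gamma_i))$, with combinatorial coefficients that vanish unless $\mu_H(\gamma_1)=\cdots=\mu_H(\gamma_n)=\mu_H(\gamma)$. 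By the vanishing above, every such iterated bracket with $n\ge 2$ is zero, so only the term $n=1$ survives and $\Pi(\epsilon_{G,H}(\gamma))=\Pi(\epsilon_H(\gamma))$, which yields (\ref{DT:equal}). The main obstacle is the bookkeeping of the last step: one must invoke Joyce's transformation in its Lie-algebra (iterated bracket) form and verify that its coefficients are supported on same-slope decompositions, while keeping track of the caveat of Remark~\ref{rmk:CS} concerning the homomorphism property of $\Pi$ on the non-compact Calabi--Yau $3$-fold $X$. Once the bracket form is in hand, the vanishing $\chi(\gamma_1,\gamma_2)=0$ makes the collapse to the $n=1$ term immediate.
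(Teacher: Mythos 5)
Your proposal is correct and takes essentially the same route as the paper: both arguments rest on the vanishing $\chi(\gamma_1,\gamma_2)=0$ for equal-slope classes forced by $H=-aK_S$, on the bracket-preservation of $\Pi$ at smooth points of $\mathfrak{M}$ (Lemma~\ref{rmk:smooth}), and on Joyce's change-of-stability identity put into multiple-commutator form before applying $\Pi$. The only cosmetic difference is the direction: you express $\epsilon_{G,H}(\gamma)$ as $\epsilon_H(\gamma)$ plus iterated brackets of the $\epsilon_H(\gamma_i)$, whereas the paper writes $\epsilon_H(\gamma)=\epsilon_{G,H}(\gamma)+(\text{multiple commutators of }\epsilon_{G,H}(\gamma_i))$ via (\ref{equal:delta}) and Joyce's inversion formula; either direction collapses to the leading term once the brackets are killed.
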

\begin{proof}
By (\ref{obvious}), the argument of~\cite[Theorem~5.11]{Joy4} shows 
that
\begin{align}\label{equal:delta}
\delta_{H}(\gamma)
=\sum_{\begin{subarray}{c}
\gamma_1+ \cdots + \gamma_m=\gamma  \\
\mu_H(\gamma_i)=\mu_H(\gamma) \\
\overline{\chi}_H(\gamma_1) \succ \cdots \succ
\overline{\chi}_H(\gamma_m)
\end{subarray}}
\delta_{G, H}(\gamma_1) \ast \cdots \ast \delta_{G, H}(\gamma_m). 
\end{align}
By substituting (\ref{equal:delta}) into the RHS of 
(\ref{epH}), 
and using the inversion formula of (\ref{ep:GH})
as in~\cite[Equation~(23)]{Joy4}), 
we can describe $\epsilon_H(\gamma)$ in terms of 
$\epsilon_{G, H}(\gamma_i)$
with $\mu_H(\gamma_i)=\mu_H(\gamma)$. 
Using~\cite[Theorem~5.4]{Joy4}, the same
argument of~\cite[Theorem~5.2]{Joy4}
shows that
$\epsilon_{H}(\gamma)$ is written as
\begin{align}\label{commutator}
\epsilon_{H}(\gamma)
=\epsilon_{G, H}(\gamma)+
\left( \begin{array}{c}
\mbox{ multiple commutators of } \\
\epsilon_{G, H}(\gamma_i) 
\mbox{ with }
\mu_H(\gamma_i)=\mu_H(\gamma) 
\end{array} \right). 
\end{align}
By our assumption $H=-aK_S$, 
(\ref{pairing}), (\ref{bracket}), and Lemma~\ref{rmk:smooth}, 
we have
\begin{align*}
\Pi [\epsilon_{G, H}(\gamma_1), \epsilon_{G, H}(\gamma_2)]=0, \ 
\mbox{ if } \mu_H(\gamma_i)=\mu_H(\gamma). 
\end{align*}
Applying $\Pi$ to (\ref{commutator}), 
we obtain the desired equality (\ref{DT:equal}). 
\end{proof}

\subsection{Wall-crossing formula}\label{subsec:wcf}
The behavior of $\DT_H(\gamma)$, $\Eu_H(\gamma)$
under the change of $H$ is described 
by the wall-crossing formula given in~\cite{Joy4}, \cite{JS}. 
Here we recall its explicit formula
for $\Eu_H(\gamma)$. 
Let $H_1$, $H_2$ be $\mathbb{R}$-divisors on $S$. 
We recall some combinatorial numbers:
\begin{defi}\emph{(\cite[Definition~4.2]{Joy4})}
For non-zero $\gamma_1, \cdots, \gamma_m \in \Lambda$, 
we define 
\begin{align*}
S(\{\gamma_1, \cdots, \gamma_m\}, H_1, H_2) \in \{0, \pm 1\}
\end{align*}
as follows: 
if for each $i=1, \cdots, m-1$, we have either 
(\ref{S1}) or (\ref{S2})
\begin{align}
\label{S1}
&\mu_{H_1}(\gamma_i) \le \mu_{H_1}(\gamma_{i+1}) \mbox{ and }
\mu_{H_2}(\gamma_1+ \cdots + \gamma_i)>
\mu_{H_2}(\gamma_{i+1}+ \cdots + \gamma_{m}) \\
\label{S2}
&\mu_{H_1}(\gamma_i) > \mu_{H_1}(\gamma_{i+1}) \mbox{ and }
\mu_{H_2}(\gamma_1+ \cdots + \gamma_i) \le
\mu_{H_2}(\gamma_{i+1}+ \cdots + \gamma_{m})
\end{align}
then define 
\begin{align*}
S(\{\gamma_1, \cdots, \gamma_m\}, H_1, H_2)=(-1)^{k}
\end{align*}
where $k$ is the number of 
$i=1, \cdots, m-1$ satisfying (\ref{S1}). 
Otherwise we define $S(\{\gamma_1, \cdots, \gamma_m\}, H_1, H_2)=0$.
\end{defi}
Another combinatorial number is defined as follows: 
\begin{defi}\emph{(\cite[Definition~4.4]{Joy4})}
For non-zero $\gamma_1, \cdots, \gamma_m \in \Lambda$, 
we define 
\begin{align}\notag
U(\{\gamma_1, \cdots, \gamma_m\}, H_1, H_2)
=\sum_{1\le m'' \le m' \le m}
\sum_{\begin{subarray}{c}
\psi \colon \{1, \cdots, m\} \to \{1, \cdots, m'\} \\
\label{defi:U}
\psi' \colon \{1, \cdots, m'\} \to \{1, \cdots, m''\}
\end{subarray}} \\
\prod_{a=1}^{m''}S(\{\Upsilon_i\}_{i \in \psi^{'-1}(a)}, H_1, H_2)
\frac{(-1)^{m^{''}-1}}{m''}
\prod_{b=1}^{m'} \frac{1}{\lvert \psi^{-1}(b) \vert !}. 
\end{align}
Here $\psi$, $\psi'$, $\Upsilon_i$ are as follows: 
\begin{itemize}
\item $\psi$ and $\psi'$ are non-decreasing surjective maps. 
\item For $1\le i, j \le m$ with $\psi(i)=\psi(j)$, 
we have 
$\mu_{H_1}(\gamma_i)=\mu_{H_1}(\gamma_j)$. 
\item For $1\le i, j \le m''$, we have 
\begin{align}\label{m''}
\mu_{H_2}\left( \sum_{k \in \psi^{-1} \psi^{'-1}(i)} \gamma_k \right)
=\mu_{H_2}\left( \sum_{k \in \psi^{-1} \psi^{'-1}(j)} \gamma_k \right). 
\end{align}
\item The elements $\Upsilon_i \in \Lambda$ 
for $1\le i\le m'$ are defined to be
\begin{align}\label{wi}
\Upsilon_i=\sum_{j \in \psi^{-1}(i)} \gamma_j. 
\end{align}
\end{itemize}
\end{defi}
For $m\in \mathbb{Z}_{\ge 1}$, let
$G(m)$ be the set of 
connected, simply connected graphs
with vertex $\{1, \cdots, m\}$, 
such that $i \to j$ in $G$ implies $i<j$.  
The wall-crossing formula for $\Eu_{H}(\gamma)$ is described in
the following way: 
\begin{thm}\emph{(\cite[Theorem~6.28, Equation~(130)]{Joy4})}\label{thm:Joy4}
Suppose that $H_1, H_2$ are ample 
$\mathbb{R}$-divisors. We have the formula: 
\begin{align}\notag
\Eu_{H_2}(\gamma)=
\sum_{\begin{subarray}{c}
m\ge 1, \gamma_1, \cdots, \gamma_m \in \Lambda \\
\gamma_1+ \cdots + \gamma_m=\gamma
\end{subarray}}
\sum_{G \in G(m)}
\frac{1}{2^{m-1}}U(\{\gamma_1, \cdots, \gamma_m\}, H_1, H_2)
\\ \label{WCF}
\prod_{i \to j \text{ \rm{in} }G} \chi(\gamma_i, \gamma_j) \prod_{i=1}^{m} \Eu_{H_1}(\gamma_i). 
\end{align}
\end{thm}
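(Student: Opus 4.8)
The plan is to derive the formula by combining Joyce's change-of-stability transformation for the Hall-algebra elements $\epsilon_H(\gamma)$ with the fact that the Euler-characteristic version of the integration map is a Lie algebra homomorphism.

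First I would recall Joyce's transformation law: for ample $\mathbb{R}$-divisors $H_1, H_2$, the elements $\epsilon_H(\gamma)$ of (\ref{epH}) transform in $H_\gamma(X)$ according to
\begin{align*}
\epsilon_{H_2}(\gamma)=\sum_{\gamma_1+\cdots+\gamma_m=\gamma}
U(\{\gamma_1, \cdots, \gamma_m\}, H_1, H_2)\,
\epsilon_{H_1}(\gamma_1)\ast \cdots \ast \epsilon_{H_1}(\gamma_m),
\end{align*}
where the sum is finite and the coefficients $U$ are exactly those of (\ref{defi:U}). This identity is purely formal: it follows from the associativity of the Ringel--Hall $\ast$-product and the definitions of the combinatorial numbers $S$ and $U$, and uses no geometric input beyond the existence of Harder--Narasimhan filtrations. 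I would take it from \cite{Joy4}.

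Second, the invariant $\Eu_H(\gamma)$ is defined by applying to $\epsilon_H(\gamma)$ the variant of the map $\Pi$ in (\ref{Lie}) in which Behrend's function is replaced by the constant $1$. Unlike the Behrend-weighted map $\Pi$ --- which by Remark~\ref{rmk:CS} is only known to respect brackets under a critical-chart hypothesis --- this Euler-characteristic integration map is a Lie algebra homomorphism $H^{\mathrm{Lie}}(X)\to C(X)$ unconditionally. Since each $\epsilon_{H_1}(\gamma_i)$ lies in the Lie subalgebra of virtual indecomposables, I can apply this homomorphism to both sides of the transformation law: the left-hand side yields $\Eu_{H_2}(\gamma)\,c_\gamma$, and each factor on the right-hand side has image $\Eu_{H_1}(\gamma_i)\,c_{\gamma_i}$.

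Third --- the crux --- I would convert the associative $\ast$-products $\epsilon_{H_1}(\gamma_1)\ast\cdots\ast\epsilon_{H_1}(\gamma_m)$ into iterated Lie brackets so that the homomorphism can be evaluated. Expanding each product as a sum of nested commutators and reading off the result in $C(X)$ produces, after the dust settles, exactly one term for every connected, simply connected graph $G\in G(m)$ on $\{1, \cdots, m\}$: each edge $i\to j$ contributes a bracket $[c_{\gamma_i}, c_{\gamma_j}]$, hence the factor $\chi(\gamma_i, \gamma_j)$ from (\ref{bracket}), while the repeated symmetrization over orderings of the factors supplies the global factor $2^{-(m-1)}$. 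Extracting the coefficient of $c_\gamma$ then gives the right-hand side of (\ref{WCF}). The main obstacle is precisely this step: one must show that the contributions indexed by graphs with cycles (those that are not simply connected) cancel, so that only tree-type graphs survive, and one must check that the signs $(-1)^{\chi(\gamma_i,\gamma_j)}$ appearing in (\ref{bracket}) combine correctly with those hidden in the $U$-coefficients to leave the clean product $\prod_{i\to j}\chi(\gamma_i, \gamma_j)$. This bookkeeping over the free Lie algebra is the content of \cite[Theorem~6.28]{Joy4} and the specialization recorded in its Equation~(130); rather than reproduce that combinatorial analysis I would invoke it directly, since our pairing $\chi$ and Lie algebra $C(X)$ fit its hypotheses.
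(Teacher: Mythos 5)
The paper offers no proof of this statement at all: it is quoted as Joyce's result, with the citation to \cite[Theorem~6.28, Equation~(130)]{Joy4} serving as the entire justification. Your outline (the $U$-coefficient transformation law for the elements $\epsilon_{H}(\gamma)$, the unweighted Euler-characteristic integration map being a Lie algebra homomorphism, and the tree-graph bookkeeping with the factor $2^{-(m-1)}$) is a faithful sketch of how Joyce's argument goes, and since you too ultimately invoke that same theorem of Joyce for the crux combinatorial step, your approach is essentially identical to the paper's.
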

\begin{rmk}
If we know that the stack $\mathfrak{M}$
satisfies the property as in Remark~\ref{rmk:CS}, 
we can apply (\ref{Lie}) to show the wall-crossing formula 
for $\DT_H(\gamma)$ similar to (\ref{WCF}) 
as in~\cite[Theorem~6.28]{JS}. 
Alternatively, if $K_S \cdot H_i<0$, we 
can substitute the equality (\ref{DT=Eu}) to (\ref{WCF})
and obtain the wall-crossing formula for $\DT_H(\gamma)$. 
\end{rmk}

\subsection{Theta type series for indefinite lattices}\label{subsec:theta}
We introduce the theta type series 
from data
\begin{align}\label{xi}
\xi=(\Gamma, B, \overline{\nu}, c_1, c_2, \cdots, c_b, c_1', 
c_2', \cdots, c_b', \alpha_1, \cdots, \alpha_k)
\end{align}
satisfying the following conditions: 
\begin{itemize}
\item (i) $\Gamma$ is a finitely 
generated free abelian group and 
\begin{align*}
B \colon \Gamma \times \Gamma \to \mathbb{Z}
\end{align*}
a non-degenerate 
symmetric bilinear pairing
with index $(a, b)$ for $a\ge b$. 

\item (ii) The elements  
$c_1, \cdots, c_b \in \Gamma_{\mathbb{Q}}$ 
span a $b$-dimensional negative 
definite subspace in $\Gamma_{\mathbb{Q}}$. 

\item (iii) The elements $c_1', \cdots, c_b' \in \Gamma_{\mathbb{Q}}$
satisfy 
\begin{align*}
&B(c_i, c_j')=0 \ \mbox{ for all }
1\le i, j \le b, \ i\neq j \\ 
&B(c_i', c_j')=0 \ \mbox{ for all } 
1\le i, j \le b \\
&B(c_i, c_i')<0 \ \mbox{ for all }
1\le i\le b. 
\end{align*}

\item (iv) The element $\overline{\nu} \in \Gamma_{\mathbb{Q}}$
satisfies that
\begin{align*}
B(c_i', \nu) \neq 0 \ 
\mbox{ for all } 1\le i\le b \mbox{ and } \nu \in \overline{\nu} +\Gamma. 
\end{align*}

\item (v) $k\in \mathbb{Z}_{\ge 0}$
and $\alpha_1, \cdots, \alpha_k$
are elements of $\Gamma_{\mathbb{Q}}$. 
\end{itemize}
As in the introduction, we 
set $Q(\nu)=B(\nu, \nu)/2$ and 
consider the series
\begin{align}\label{Theta}
&\Theta_{\xi}(q) 
\cneq \\
\notag
&\sum_{\nu \in \overline{\nu} + \Gamma}
\prod_{i=1}^{b}\left( \sgn \left(B(c_i, \nu)\right)-
\sgn \left( B(c_i', \nu) \right) \right)
\prod_{j=1}^{k} B(\alpha_j, \nu) \cdot q^{Q(\nu)}. 
\end{align}
When $b=k=0$, 
the series (\ref{Theta})
becomes
\begin{align}\label{Theta:classical}
\Theta_{\xi}(q)=\sum_{\nu \in \overline{\nu}+\Gamma} q^{Q(\nu)}. 
\end{align}
The series (\ref{Theta:classical}) 
is a classical theta series with respect to the positive 
definite quadratic form $Q$ on $\Gamma$, 
which 
is a modular form with weight $a/2$.
If $b=0$ and $k>0$, then the series (\ref{Theta}) is obtained 
as derivations of Jacobi theta series with respect to 
elliptic variables. 

If $b=1$ and $k=0$, 
then the series (\ref{Theta})
is not always a modular form. 
Instead 
Zwegers~\cite{Zweg}
showed that the series
\begin{align*} 
\sum_{\nu \in \overline{\nu} + \Gamma}
 \left( E\left( \frac{B(c_1, \nu)}{\sqrt{-Q(c_1)}} 
y^{\frac{1}{2}} \right)-
\sgn (B(c_1', \nu)) \right)
 e^{2\pi i Q(\nu) \tau}
\end{align*}
for $y=\Imm \tau$ gives a real analytic modular form 
of weight $(a+1)/2$. 
Here $E(x)$ is defined by 
\begin{align*}
E(x)=2 \int_{0}^{x} e^{-\pi u^2} du. 
\end{align*}
A series which admits 
a modular completion as above is called 
a mock modular form~\cite{Zagier}.   
The case $b=1$ and $k>0$ is obtained by the 
derivations of mock Jacobi forms in~\cite{Zweg}
with respect to the elliptic variables. 

Suppose that $b\ge 2$ and $k=0$. 
If we further assume that 
\begin{align}\label{further}
B(c_i, c_j)=0, \ i\neq j
\end{align}
then the argument of~\cite{Zweg} can 
be easily generalized to show 
that 
\begin{align}\label{bmock}
\sum_{\nu \in \overline{\nu} + \Gamma}
\prod_{i=1}^{b}
 \left( E\left( \frac{B(c_i, \nu)}{\sqrt{-Q(c_i)}} 
y^{\frac{1}{2}} \right)-
\sgn (B(c_i', \nu)) \right)
 e^{2\pi i Q(\nu) \tau}
\end{align}
is a real analytic modular form. 
Indeed the series (\ref{Theta}) in this case
is a mixed mock modular form in the sense of~\cite{BrMan}. 
\begin{rmk}
Unfortunately
 the series (\ref{Theta}) without 
the condition (\ref{further}) 
is involved in Theorem~\ref{intro:main}. 
In that case, 
the proof of~\cite{Zweg} 
is not directly applied to show the 
modularity of (\ref{bmock}). 
The study of the modularity of (\ref{bmock}) without (\ref{further}),  
or other kind of modular completion of the series (\ref{Theta}), 
would be required to 
understand the S-duality for an arbitrary rank. 
\end{rmk}

\subsection{Some properties of theta type series}
Let us 
consider the series (\ref{Theta}) determined by data 
(\ref{xi}) satisfying (i) to (v). 
We first show the convergence of (\ref{Theta}): 
\begin{lem}\label{lem:converge}
For $\tau \in \hH$, the series
\begin{align}\label{Theta:tau}
\sum_{\nu \in \overline{\nu} + \Gamma}
\prod_{i=1}^{b} \left( \sgn (B(c_i, \nu))-
\sgn ( B(c_i', \nu) ) \right)
\prod_{j=1}^{k} B(\alpha_j, \nu) \cdot e^{2\pi i Q(\nu)\tau}
\end{align}
converges absolutely. 
\end{lem}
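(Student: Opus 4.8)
The plan is to establish convergence by controlling the growth of the summand against the Gaussian decay coming from $e^{2\pi i Q(\nu)\tau}$. The essential difficulty is that $Q$ is \emph{not} positive definite on $\Gamma$ (its signature is $(a,b)$), so the naive bound $\lvert e^{2\pi i Q(\nu)\tau}\rvert = e^{-2\pi y\, Q(\nu)}$ does not decay along the $b$ negative directions. The entire role of the sign factors $\prod_{i=1}^b(\sgn(B(c_i,\nu))-\sgn(B(c_i',\nu)))$ is to cut down the region of summation so that convergence is restored on the support. So the first step is to understand on which $\nu$ this product is nonzero.

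First I would orthogonally decompose $\Gamma_{\mathbb{R}}$ as $V_+ \oplus V_-$, where $V_-$ is the $b$-dimensional negative definite subspace spanned by $c_1,\dots,c_b$ (condition (ii)) and $V_+ = V_-^{\perp}$ is positive definite of dimension $a$. Write $\nu = \nu_+ + \nu_-$ accordingly, so $Q(\nu) = Q(\nu_+) + Q(\nu_-)$ with $Q(\nu_+) \ge 0$ and $Q(\nu_-) \le 0$. Then $\lvert e^{2\pi i Q(\nu)\tau}\rvert = e^{-2\pi y\, Q(\nu_+)} e^{-2\pi y\, Q(\nu_-)}$; the first factor gives Gaussian decay in $\nu_+$, while the second factor \emph{grows} like $e^{2\pi y\lvert Q(\nu_-)\rvert}$ as $\nu_-$ grows. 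The key point, which I would extract from conditions (iii) and (iv), is that for each fixed $i$ the factor $\sgn(B(c_i,\nu)) - \sgn(B(c_i',\nu))$ vanishes unless $B(c_i,\nu)$ and $B(c_i',\nu)$ have strictly opposite signs (note $B(c_i',\nu)\neq 0$ always, by (iv), so the second sign is always $\pm 1$). Because $c_i, c_i' \in V_-$ up to contributions that pair trivially, I expect $B(c_i,\nu)$ and $B(c_i',\nu)$ to be essentially linear functionals on the $\nu_-$-component, and the opposite-sign condition to confine $\nu_-$ to a \emph{bounded} cone-like region; more precisely, the product being nonzero forces $\nu_-$ to lie in a region where $\lvert\nu_-\rvert$ is controlled by $\lvert\nu_+\rvert$.

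The main technical step, and the one I expect to be the real obstacle, is making that confinement quantitative: showing that on the support of the sign product one has a bound of the form $\lvert Q(\nu_-)\rvert \le \kappa\, Q(\nu_+) + O(1)$ with some constant $\kappa$, or more robustly that the number of lattice points $\nu$ with $Q(\nu_+) \le R$ and with the sign product nonzero grows only polynomially in $R$. The delicate case is when the $c_i$ are not mutually orthogonal (the condition (\ref{further}) is \emph{not} assumed here), so the confining half-spaces interact and one cannot simply treat the $b$ factors independently. I would handle this by analyzing the polyhedral region carved out by the $2b$ linear inequalities $\sgn(B(c_i,\nu)) = -\sgn(B(c_i',\nu))$ and arguing that, thanks to $B(c_i,c_i')<0$ in (iii), this region is contained in a cone transverse to $V_-$, forcing $\nu_-$ to grow at most linearly with $\nu_+$.

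Granting the geometric bound, the final step is routine estimation. On the support the polynomial prefactor $\prod_{j=1}^k B(\alpha_j,\nu)$ is bounded by a polynomial in $\lvert\nu\rvert$, and the sign product is bounded by $2^b$, so the absolute value of the general term is at most
\begin{align*}
2^b\, P(\lvert\nu\rvert)\, e^{-2\pi y\, Q(\nu_+)}\, e^{2\pi y\lvert Q(\nu_-)\rvert}
\end{align*}
for a polynomial $P$. Substituting the bound $\lvert Q(\nu_-)\rvert \le \kappa\, Q(\nu_+) + O(1)$ and choosing the decomposition so that the net exponent is still negative-definite in $\nu_+$ — or, failing a clean constant $\kappa$, using the polynomial count of lattice points in each shell $\{Q(\nu_+) \in [R, R+1]\}$ — I would dominate the series by a convergent Gaussian-type sum $\sum_{\nu_+} P(\lvert\nu_+\rvert)\, e^{-c\, Q(\nu_+)}$ with $c>0$. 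Since $y = \Imm\tau > 0$, this majorant converges, and absolute convergence of (\ref{Theta:tau}) follows.
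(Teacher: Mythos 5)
Your heuristic is right (the sign factors must cut the sum down to a region where decay can win), but the two steps you yourself flag as the crux are precisely where the proposal is incomplete, and one of your two proposed ways to finish actually fails. First, the confinement claim --- that the sign conditions force $\lvert \nu_- \rvert$ to be controlled by $\lvert \nu_+ \rvert$ on the support --- is never proved, only hoped for; note also that your preparatory remark that ``$c_i, c_i' \in V_-$ up to contributions that pair trivially'' is false: $c_i'$ is isotropic by condition (iii), so it cannot lie in the negative definite space $V_-$, and its $V_+$-component pairs nontrivially with $\nu_+$ --- this cross term is the only mechanism by which confinement could hold at all, so it cannot be waved away. Second, your route (a) (``choose the decomposition so the net exponent is negative definite in $\nu_+$'') is impossible. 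Take signature $(1,1)$ with $B(x,y)=x_1y_1-x_2y_2$, $c_1=(0,1)$, $c_1'=(1,1)$, $\overline{\nu}=(\tfrac12,0)$: the support contains all points $\nu=(n+\tfrac12,n)$, for which $Q(\nu_+)\sim n^2/2$, $Q(\nu_-)\sim -n^2/2$, and $Q(\nu)\sim n/2$. So the best constant is $\kappa=1$ and $Q(\nu)$ grows only \emph{linearly} in $\lvert\nu\rvert$ along the support; no orthogonal splitting can produce Gaussian decay in $\nu_+$. The series still converges, but only via your route (b), a polynomial count of support points in the shells $\{Q(\nu)\in[R,R+1]\}$; making that count work requires a uniform lower bound $\inf\{\lvert B(c_i',\nu)\rvert : \nu\in\overline{\nu}+\Gamma\}>0$ (condition (iv) plus discreteness of the values), which you never invoke --- you use (iv) only to say the sign is $\pm 1$. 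As written, the proof has genuine gaps at both the confinement step and the final estimate.

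For contrast, the paper's proof avoids this geometry entirely by decomposing along the isotropic vectors rather than orthogonally: every $\nu\in\overline{\nu}+\Gamma$ is written uniquely as $\nu=\mu+\sum_{i=1}^{b}m_i c_i'$ with $m_i\in\mathbb{Z}$ and $B(c_i,\mu)/B(c_i,c_i')\in[0,1)$, which is possible because $B(c_i,c_j')=0$ for $i\neq j$ and $B(c_i,c_i')<0$. Since $B(c_i',c_j')=0$, the quadratic form becomes \emph{linear} in the $m_i$, namely $Q(\nu)=Q(\mu)+\sum_i m_i B(c_i',\mu)$, and the sign factors restrict each $m_i$ to (essentially) a half-line on which $e^{2\pi i m_i B(c_i',\mu)\tau}$ decays geometrically, uniformly in $\mu$ because $\inf_{\mu}\lvert B(c_i',\mu)\rvert>0$ by condition (iv). What remains is a sum over $\mu$ in finitely many translates of the orthogonal complement of the span of $c_1,\cdots,c_b$, which is positive definite, so one concludes by comparison with a classical theta series. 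This is exactly what conditions (iii) and (iv) were designed for; your approach can likely be completed (the confinement does hold, by summing the $b$ sign inequalities against the Gram matrix of the $V_+$-components of the $c_i'$), but it is substantially harder than the argument it replaces.
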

\begin{proof}
The convergence for $(b, k)=(1, 0)$ 
follows from~\cite[Proposition~2.4]{Zweg}. 
The conditions (i) to (v) allow us
to apply a similar argument. 
Since the series (\ref{Theta}) is unchanged 
by replacing $c_i, c_i'$ by multiplications
of positive integers, we may assume that $c_i, c_i' \in \Gamma$. 
By the condition (iii) of data (\ref{xi}), 
the element $\nu \in \overline{\nu} + \Gamma$
is uniquely written as 
\begin{align*}
\nu=\mu+ \sum_{i=1}^{b}m_i c_i'
\end{align*}
for some $\mu \in \overline{\nu} +\Gamma$, 
$m_i \in \mathbb{Z}$ satisfying 
\begin{align*}
\frac{B(c_i, \mu)}{B(c_i, c_i')} \in [0, 1) \ 
\mbox{ for all } 1\le i\le b.  
\end{align*}
Therefore the series (\ref{Theta:tau}) is written as
\begin{align}\notag
\sum_{\begin{subarray}{c}
\mu \in \overline{\nu} + \Gamma \\
\frac{B(c_i, \mu)}{B(c_i, c_i')} \in [0, 1) \\
\text{for all }
1\le i\le b
\end{subarray}}
e^{2\pi i Q(\mu)\tau}
&\sum_{m_1, \cdots, m_b \in \mathbb{Z}}
\prod_{i=1}^{b}
\left( \sgn \left( \frac{B(c_i, \mu)}{B(c_i, c_i')} +m_i \right)  
+\sgn(B(c_i', \mu)) \right) \\
\label{theta:1}
&\cdot (-1)^b \prod_{i=1}^{k} B\left(\alpha_i, \mu+\sum_{i=1}^{b}m_i c_i'
\right)
e^{\sum_{j=1}^{b} 2\pi i B(c_j', \mu) m_j \tau}. 
\end{align}
Here we have used $B(c_i, c_i')<0$ 
from the condition (ii). 
Since there is a finite number
of possibilities for the value
$B(c_i, \mu)/B(c_i, c_i') \in [0, 1)$
in (\ref{theta:1}), 
there is a finite number of $\mu_1, \cdots, \mu_p \in \Gamma$
such that 
any $\mu \in \overline{\nu} + \Gamma$ 
in (\ref{theta:1})
is written 
as 
\begin{align*}
\mu= \overline{\nu} + \mu_e + \mu'
\end{align*}
for some $1\le e\le p$ and 
$\mu' \in \Gamma'$, 
where $\Gamma' \subset \Gamma$ is the orthogonal 
complement of the $(0, b)$-space spanned by 
$c_1, \cdots, c_b$. 
Therefore the series (\ref{theta:1})
is a finite linear combination
 of the series of the form
\begin{align*}
&\sum_{\mu \in \overline{\nu}+\mu_e+\Gamma'}
\prod_{s\in S}B(\alpha_s, \mu) e^{2\pi i Q(\mu) \tau} \\
&\cdot
\prod_{j=1}^{b} \sum_{m_j \in \mathbb{Z}}
m_j^{k_j}\left( \sgn \left( \frac{B(c_j, \mu)}{B(c_j, c_j')} +
m_j \right)  
+\sgn(B(c_j', \mu)) \right) 
e^{2\pi i B(c_j', \mu) m_j \tau}
\end{align*}
for some fixed $e \in \{1, \cdots, p\}$, 
a finite set $S \subset \{1, \cdots, k\}$
and 
some $k_1, \cdots, k_b \in \mathbb{Z}_{\ge 0}$. 
Let us also fix the elements 
\begin{align*}
\mu \in \overline{\nu} + \mu_e +\Gamma', \
j \in \{1, \cdots, b\}.
\end{align*}
Then we have 
\begin{align*}
&\sum_{m_j \in \mathbb{Z}}
m_j^{k_j}\left( \sgn \left( \frac{B(c_j, \mu)}{B(c_j, c_j')} +
m_j \right)  
+\sgn(B(c_j', \mu)) \right) 
e^{2\pi i B(c_j', \mu) m_j \tau} \\
&=2 \sgn(B(c_j', \mu)) \sum_{m_j \in \mathbb{Z}_{\ge 0}}m_j^{k_j}
e^{2\pi i \lvert B(c_j', \mu) \rvert m_j \tau}
+C \\
&=2\sgn(B(c_j', \mu))(2\pi i \lvert B(c_j', \mu) \rvert)^{-k_j}
\left( \frac{d}{d\tau} \right)^{k_j}
\frac{1}{1-e^{2\pi i \lvert B(c_j', \mu)\rvert \tau}}
+C. 
\end{align*}
Here $C\in \{0, \pm 1\}$, depending on the signs 
of $B(c_j', \mu)$ and $B(c_j, \mu)/B(c_j, c_j')$. 
Since we have
\begin{align*}
\mathrm{inf} \{ \lvert B(c_j', \mu) \rvert : \mu \in \overline{\nu} +\Gamma\}>0
\end{align*}
due to the condition (iv),
we have 
\begin{align*}
\mathrm{sup} \left\{ \left\lvert B(c_j', \mu)^{-k_j}
\left( \frac{d}{d\tau} \right)^{k_j}
\frac{1}{1-e^{2\pi i \lvert B(c_j', \mu)\rvert \tau}} \right\rvert :
\mu \in \overline{\nu} + \Gamma \right\} <\infty. 
\end{align*}
We are reduced to showing the absolute convergence of 
the series
\begin{align}\label{theta:2}
\sum_{\mu \in \overline{\nu}+\mu_e+\Gamma'}
\prod_{s\in S}B(\alpha_s, \mu) e^{2\pi i Q(\mu) \tau}. 
\end{align}
Since $\Gamma'$ is positive definite by the condition (ii), 
the series (\ref{theta:2}) converges absolutely 
by the absolute convergence of the classical theta series. 
\end{proof}
By the proof of Lemma~\ref{lem:converge},
the series $\Theta_{\xi}(q)$ in (\ref{Theta})
makes sense, and determines the element 
\begin{align*}
\Theta_{\xi}(q) \in \mathbb{Q}(\hspace{-1mm}( q^{\frac{1}{N}})\hspace{-1mm})
\end{align*} 
for some $N\in \mathbb{Z}_{\ge 1}$. 
\begin{defi}\label{defi:subM}
We define  
\begin{align}\label{subM}
\mM \subset \lim_{\longrightarrow}
\mathbb{Q}(\hspace{-1mm}( q^{\frac{1}{N}})\hspace{-1mm})
\end{align}
to
be the $\mathbb{Q}$-subalgebra
generated by $\Theta_{\xi}(q^{\frac{1}{N}})$ for all the data (\ref{xi}) 
and $N\in \mathbb{Z}_{\ge 1}$. 
\end{defi}
We will use the following lemma: 
\begin{lem}\label{lem:classical}
Let $\xi$ be data (\ref{xi}), 
$V \subset \Gamma_{\mathbb{Q}}$ a linear subspace
which contains $c_1, \cdots, c_b$, $c_1', \cdots, c_b'$, 
and $T \subset \{1, \cdots, b\}$ a subset. 
Then the series
\begin{align}\label{series:V}
\sum_{\begin{subarray}{c}
\nu \in (\overline{\nu} + \Gamma) \cap V \\
B(\nu, c_i)=0 \text{ \rm{for all} } i \in T
\end{subarray}}
\prod_{i\notin T} \left( \sgn (B(c_i, \nu))-
\sgn ( B(c_i', \nu) ) \right)
\prod_{j=1}^{k}B(\alpha_j, \nu) q^{Q(\nu)} 
\end{align}
is an element of $\mM$. 
\end{lem}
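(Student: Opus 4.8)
The plan is to exhibit the series (\ref{series:V}) as a single theta type series $\Theta_{\xi'}$ attached to new admissible data $\xi'$ supported on a sublattice of $\Gamma$, and then to invoke Definition~\ref{defi:subM}. Write $N=\langle c_1,\dots,c_b\rangle$, $U=\langle c_i : i\in T\rangle$ and $P=\langle c_1,\dots,c_b,c_1',\dots,c_b'\rangle$ for the $\mathbb{Q}$-subspaces these vectors span, and set $W=V\cap U^{\perp}$ and $\Gamma'=\Gamma\cap W$, where $U^{\perp}$ denotes the $B$-orthogonal complement in $\Gamma_{\mathbb{Q}}$. The summation locus in (\ref{series:V}) is exactly $(\overline{\nu}+\Gamma)\cap W$; if this is empty the series is $0\in\mM$, and otherwise, after fixing a base point $\overline{\nu}'\in(\overline{\nu}+\Gamma)\cap W$, it equals the coset $\overline{\nu}'+\Gamma'$ with $\overline{\nu}'\in\overline{\nu}+\Gamma$.

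The structural input I would establish first is non-degeneracy. Since the $c_i'$ span a $b$-dimensional totally isotropic subspace of $P$ and $P$ is itself non-degenerate (its Gram matrix in the basis $c_i,c_i'$ has determinant $\pm\prod_i B(c_i,c_i')^2\neq 0$, using condition (iii)), $P$ has index $(b,b)$; hence $P^{\perp}$ is positive definite, and $V=P\oplus(V\cap P^{\perp})$ shows that $B|_V$ is non-degenerate. As $U\subset V$ is negative definite, hence non-degenerate, $W=V\cap U^{\perp}$ is the orthogonal complement of $U$ inside $V$, so $B'\cneq B|_{\Gamma'}$ is non-degenerate of index $(\dim(V\cap P^{\perp})+b,\ b-\lvert T\rvert)$, with $b-\lvert T\rvert\ge 0$. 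This gives condition (i) for $\xi'$, the inequality between positive and negative ranks being clear.

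Next I would build the remaining entries of $\xi'$. The clean point is that for $i\notin T$ the vector $c_i'$ already lies in $W$, because condition (iii) gives $B(c_i',c_k)=0$ for $k\in T$ (as $k\neq i$); so these need no modification. For the unprimed vectors I take $\widehat{c}_i$ to be the $B$-orthogonal projection of $c_i$ onto $W$; writing $V=U\oplus W$ orthogonally, this projection is $c_i$ minus its $U$-component, hence lands in $N\cap U^{\perp}$, which is negative definite. The $\widehat c_i$ for $i\notin T$ remain linearly independent (the projection $N\to N\cap U^\perp$ is injective on $\langle c_i:i\notin T\rangle$, which meets $U$ trivially), so they span a $(b-\lvert T\rvert)$-dimensional negative definite subspace, giving condition (ii). Finally let $\widehat{\alpha}_j$ be the projection of $\alpha_j$ onto $W$. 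By construction $B(c_i,\nu)=B'(\widehat c_i,\nu)$, $B(\alpha_j,\nu)=B'(\widehat\alpha_j,\nu)$, $B(c_i',\nu)=B'(c_i',\nu)$ and $Q(\nu)=\tfrac12 B'(\nu,\nu)$ for all $\nu\in W$, so (\ref{series:V}) becomes literally $\Theta_{\xi'}(q)$.

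It then remains to check (iii)--(v) for $\xi'$, and condition (iii) is the main obstacle: the projection could in principle disturb the isotropy relations. However, the computations $B'(\widehat c_i,c_j')=B(c_i,c_j')$ (for $i\neq j$), $B'(c_i',c_j')=B(c_i',c_j')$, and $B'(\widehat c_i,c_i')=B(c_i,c_i')$ all reduce to the original (iii), precisely because the $U$-components pair trivially against the $c_j'$ (using $B(c_k,c_j')=0$ for $k\in T$, $j\notin T$) and because the $c_i'$ were left untouched; so the new $c$'s and $c'$'s inherit exactly the required orthogonality and negativity. Condition (iv) holds since $\overline{\nu}'+\Gamma'\subset\overline{\nu}+\Gamma$, whence $B'(c_i',\nu)=B(c_i',\nu)\neq 0$ on the new coset, and (v) is automatic. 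Thus $\xi'$ is admissible and (\ref{series:V})$=\Theta_{\xi'}(q)$ is a generator of $\mM$, completing the proof; the empty-coset case contributes $0\in\mM$.
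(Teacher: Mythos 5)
Your proof is correct and follows essentially the same route as the paper's: both identify the summation locus with a coset $\overline{\nu}'+\Gamma'$ of the sublattice $\Gamma'=\Gamma\cap V\cap\langle c_i : i\in T\rangle^{\perp}$, replace the $c_i$ and $\alpha_j$ by their $B$-orthogonal projections onto $\Gamma'_{\mathbb{Q}}$ (leaving the $c_i'$, which already lie there, untouched), and check conditions (i)--(v) for the resulting data. The only difference is one of detail: you verify explicitly that $B$ restricted to $\Gamma'_{\mathbb{Q}}$ is non-degenerate, via the signature $(b,b)$ of the span of the $c_i,c_i'$, a point the paper's proof uses implicitly when forming the orthogonal decomposition $\Gamma_{\mathbb{Q}}=\Gamma'_{\mathbb{Q}}\oplus\Gamma_{\mathbb{Q}}^{\prime\perp}$.
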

\begin{proof}
If the series (\ref{series:V}) is not zero, there is 
$\nu_0 \in (\overline{\nu}+\Gamma) \cap V$
with $B(\nu_0, c_i)=0$ for all $i \in T$. 
Any element $\nu$ in the series (\ref{series:V})
satisfies that
\begin{align*}
\nu-\nu_0 \in \Gamma' \cneq 
\{ v \in \Gamma \cap V : B(v, c_i) =0 \text{ for all } i \in T\}. 
\end{align*}
Note that $c_i'$ for $i\notin T$ and 
$\nu_0$ are elements of $\Gamma'_{\mathbb{Q}}$. 
We consider the decomposition
\begin{align*}
\Gamma_{\mathbb{Q}}=\Gamma'_{\mathbb{Q}} \oplus 
\Gamma_{\mathbb{Q}}^{'\perp}
\end{align*}
where $\Gamma_{\mathbb{Q}}^{'\perp}$ is the orthogonal 
complement of $\Gamma'_{\mathbb{Q}}$ in $\Gamma_{\mathbb{Q}}$
with respect to $B(-, -)$. 
For $\nu \in \Gamma_{\mathbb{Q}}$, we denote by $\nu^{+}$ the
$\Gamma'_{\mathbb{Q}}$-component 
of $\nu$ with respect to the above decomposition. 
The series (\ref{series:V}) is written as
\begin{align}\label{series:dash}
\sum_{\begin{subarray}{c}
\nu \in \nu_0 + \Gamma'
\end{subarray}}
\prod_{i\notin T} 
\left( \sgn (B(c_i^{+}, \nu))-
\sgn ( B(c_i', \nu) ) \right)
 \prod_{j=1}^{k}B(\alpha_j^{+}, \nu) \cdot q^{Q(\nu)}. 
\end{align}
Since $c_1, \cdots, c_b$ span 
a $(0, b)$-space, the elements 
$c_i$ with $i \in T$ span a $(0, \lvert T \rvert)$-space
and $c_i^{+}$ with $i\notin T$ span a 
$(0, b-\lvert T \rvert)$-space. 
Hence the data
\begin{align*}
(\Gamma', B|_{\Gamma'}, \nu_0, c_i^{+}, c_i', i\notin T, \alpha_1^{+}, \cdots, 
\alpha_k^{+})
\end{align*}
satisfies the conditions (i) to (v) in the previous subsection. 
Therefore the series (\ref{series:dash}) is an element of $\mM$. 
\end{proof}
\begin{lem}\label{lem:element}
Any element in $\mM$ is written as
\begin{align*}
\sum_{i=1}^{n} a_i \Theta_{\xi_i}(q^{\frac{1}{N}})
\end{align*}
for a finite number of data $\xi_1, \cdots, \xi_n$
as in (\ref{xi}), $a_1, \cdots, a_n \in \mathbb{Q}$
and $N\in \mathbb{Z}_{\ge 1}$.  
\end{lem}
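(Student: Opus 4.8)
The plan is to show that the generating set $\{\Theta_{\xi}(q^{1/N})\}$ is already closed, up to rational scalars and passage to a common root of $q$, under the product operation. Since $\mM$ is by definition the $\mathbb{Q}$-algebra these series generate, an arbitrary element is a finite $\mathbb{Q}$-linear combination of finite products $\Theta_{\xi_1}(q^{1/N_1})\cdots \Theta_{\xi_p}(q^{1/N_p})$ (together with a constant term), so it suffices to rewrite each such product as a single $a\cdot \Theta_{\xi}(q^{1/N})$. This then gives exactly the asserted form.

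First I would reduce all factors in a given product to a common root of $q$. Set $N=N_1\cdots N_p$ and $p_0=q^{1/N}$, so that $\Theta_{\xi_i}(q^{1/N_i})=\Theta_{\xi_i}(p_0^{M_i})$ with $M_i=N/N_i\in\mathbb{Z}_{\ge 1}$. For data $\xi=(\Gamma,B,\overline{\nu},c_\bullet,c_\bullet',\alpha_\bullet)$ and $M\in\mathbb{Z}_{\ge 1}$, let $\xi^{(M)}$ be the datum obtained by replacing $B$ with the integral form $MB$ and keeping all other entries. Multiplying by the positive integer $M$ leaves the index $(a,b)$ unchanged, preserves the vanishing and negativity relations in (iii) and the nonvanishing in (iv), and does not alter any sign $\sgn(B(c_i,\nu))$ or $\sgn(B(c_i',\nu))$; hence $\xi^{(M)}$ again satisfies (i)--(v). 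Since $Q_{MB}(\nu)=M\,Q(\nu)$ and the linear factors become $M\,B(\alpha_j,\nu)$, one obtains the identity $\Theta_{\xi}(p_0^{M})=M^{-k}\,\Theta_{\xi^{(M)}}(p_0)$. Applying this to each factor rewrites the product as $\bigl(\prod_i M_i^{-k_i}\bigr)\prod_i \Theta_{\xi_i^{(M_i)}}(p_0)$, a rational multiple of a product of series all evaluated at the single variable $p_0=q^{1/N}$.

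It then remains to show that a product of two (hence, by induction, finitely many) series in the same variable is again a single such series. Given data $\xi_1,\xi_2$, I would form the orthogonal direct sum $\Gamma=\Gamma_1\oplus\Gamma_2$ with $B=B_1\oplus B_2$, take $\overline{\nu}=(\overline{\nu}_1,\overline{\nu}_2)$, and let the $c$'s, $c'$'s and $\alpha$'s be the unions of those of $\xi_1,\xi_2$, each extended by zero to the complementary summand. Writing $\nu=(\nu_1,\nu_2)$, one has $B((c_i,0),\nu)=B_1(c_i,\nu_1)$, $Q(\nu)=Q_1(\nu_1)+Q_2(\nu_2)$, and analogously for all pairings, so the double sum defining $\Theta_{\xi_1}(p_0)\,\Theta_{\xi_2}(p_0)$ is exactly $\Theta_{\xi}(p_0)$ for this direct-sum datum $\xi$. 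The content is then to check (i)--(v) for $\xi$: the indices add as $(a_1+a_2,b_1+b_2)$ with $a_1+a_2\ge b_1+b_2$, giving (i); the two families $\{c_i\}$ span orthogonal negative definite subspaces, giving (ii); all cross pairings between the summands vanish, so (iii) reduces to the two original instances; and for (iv) the $\Gamma_2$-component is irrelevant since $(c_i',0)$ pairs only with $\nu_1\in\overline{\nu}_1+\Gamma_1$, to which condition (iv) for $\xi_1$ applies (and symmetrically).

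The step I would treat most carefully — and the only genuinely subtle bookkeeping — is this verification of (i)--(v) for the direct sum, in particular (iv), where one must confirm that the coset projection $\nu\mapsto\nu_1$ keeps $\nu_1$ inside $\overline{\nu}_1+\Gamma_1$ so that the original nonvanishing hypothesis is available. Finally, the unit of $\mM$ is accounted for by the trivial datum $\Gamma=0$ (with $b=k=0$), whose series is the single term $q^{Q(0)}=1$, so the empty product is also of the required form. Combining the three steps, every element of $\mM$ is a finite $\mathbb{Q}$-linear combination $\sum_{i=1}^{n} a_i\,\Theta_{\xi_i}(q^{1/N})$, as claimed.
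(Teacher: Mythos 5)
Your proof is correct and follows essentially the same route as the paper's: the paper also reduces everything to the two identities $\Theta_{\xi_1}(q)\cdot\Theta_{\xi_2}(q)=\Theta_{\xi_1\oplus\xi_2}(q)$ for the orthogonal direct sum of data and $\Theta_{\xi}(q)=\Theta_{\xi'}(q^{1/N})$ with $B$ replaced by $NB$. In fact you are slightly more careful than the paper, which omits both the scalar factor $M^{-k}$ coming from the linear factors $B(\alpha_j,\nu)$ (harmless, since rational coefficients are allowed) and the explicit verification of conditions (i)--(v) for the direct sum.
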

\begin{proof} 
If $\xi_1, \xi_2$ are data (\ref{xi}), then 
we have 
\begin{align*}
\Theta_{\xi_1}(q) \cdot \Theta_{\xi_2}(q) =\Theta_{\xi_1 \oplus \xi_2}(q)
\end{align*} 
where $\xi_1 \oplus \xi_2$ is the direct product of 
data $\xi_1$, $\xi_2$ in an obvious sense. 
Moreover $\Theta_{\xi}(q)=\Theta_{\xi'}(q^{1/N})$ where
$\xi'$ is data (\ref{xi}) with $B$ replaced by 
$NB$. Therefore the lemma holds. 
\end{proof}

\section{Proof of Theorem~\ref{intro:main}}
In this section, we prove Theorem~\ref{intro:main}. Below, 
we denote by $H$ the hyperplane class of $\mathbb{P}^2$. 
We identify $\NS(\mathbb{P}^2)$ with 
$\mathbb{Z}$ by $lH \mapsto l$.
For $r\in \mathbb{Z}_{\ge 1}$ and $l\in \mathbb{Z}$, 
we consider the generating 
series 
\begin{align*}
\DT(r, l) \cneq \DT_H(r, l) 
\end{align*} 
defined by (\ref{DTHrl}) for $S=\mathbb{P}^2$. 
By the Bogomolov inequality, the above
series coincides with the series (\ref{intro:gen}) in
the introduction.

\subsection{Blow-up formula}
Let 
\begin{align*}
f \colon \widehat{\mathbb{P}}^2 \to \mathbb{P}^2
\end{align*}
be a blow-up at a point in $\mathbb{P}^2$. 
Note that we have
\begin{align*}
\NS(\widehat{\mathbb{P}}^2)=\mathbb{Z}[f^{\ast}H] \oplus 
\mathbb{Z}[C].
\end{align*}
Below we write an element $l f^{\ast}H +aC$ 
of $\NS(\wP)$ 
as $(l, a)$. 
We have the following blow-up formula of the series (\ref{DTHrl}):
\begin{prop}\label{prop:blow}
For any $r\in \mathbb{Z}_{\ge 1}$, $l\in \mathbb{Z}$
and $a \in \mathbb{Z}$, 
we have the following formula:
\begin{align}\label{blow:DT}
\DT_{H_0}(r, (l, a))=q^{\frac{r}{24}}\eta(q)^{-r}
\cdot \vartheta_{r, a}(q) \cdot \DT(r, l). 
\end{align}
Here $H_0=f^{\ast}H$ and $\vartheta_{r, a}(q)$ is 
defined by 
\begin{align}\notag
\vartheta_{r, a}(q) \cneq \sum_{\begin{subarray}{c}
(k_1, \cdots, k_{r-1}) \in  (a/r, \cdots, a/r) + \mathbb{Z}^{r-1}
\end{subarray}}
q^{\sum_{1\le i\le j\le r-1}k_i k_j}. 
\end{align}
\end{prop}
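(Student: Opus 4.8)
The plan is to derive~(\ref{blow:DT}) from the blow-up formula for the generating series~(\ref{DTHrl}) established in~\cite{TodS}, in which the blow-up $f\colon \wP \to \mathbb{P}^2$ is interpreted as a $3$-fold flop. First I would fix the geometric bookkeeping and recall the precise output of~\cite{TodS}. Since $K_{\wP}\cdot H_0<0$ (because $K_{\wP}=f^{\ast}K_{\mathbb{P}^2}+C$ and $C\cdot H_0=0$), the sheaves counted by $\DT_{H_0}(r,(l,a))$ are $\oO_{\wP}$-modules by Lemma~\ref{rmk:standard}, and the flop relates them, up to the wall-crossing contributions supported over the exceptional curve $C$, to the sheaves on $\mathbb{P}^2$ counted by $\DT(r,l)$. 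The formula of~\cite{TodS} should then express $\DT_{H_0}(r,(l,a))$ as $\DT(r,l)$ times a universal series depending only on $r$ and $a$; its $l$-independence is consistent with Lemma~\ref{lem:modr}, and by~(\ref{DTEuH}) the overall sign prefactor agrees on both sides since $\chi(\oO_{\wP})=\chi(\oO_{\mathbb{P}^2})=1$.

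The next step is to identify this universal series with $q^{\frac{r}{24}}\eta(q)^{-r}\cdot\vartheta_{r,a}(q)$, which I would do by splitting the contribution localized over $C$ into two independent pieces. The factor
\begin{align*}
q^{\frac{r}{24}}\eta(q)^{-r}=\prod_{m\ge 1}(1-q^m)^{-r}
\end{align*}
should account for the length-zero modifications concentrated on $C$: a rank $r$ sheaf acquires $r$ independent partition-type degrees of freedom along the exceptional curve, each contributing one factor $\prod_{m\ge1}(1-q^m)^{-1}$ to the discriminant generating series. The remaining factor $\vartheta_{r,a}(q)$ should record the $C$-twisting data: fixing $c_1\cdot C=a$ and decomposing the sheaf along $C$ into $r$ line-bundle degrees with prescribed sum, the induced shift of the discriminant $\Delta$ is a positive-definite quadratic form in the $r-1$ independent differences, which after normalization becomes exactly $Q(\vec k)=\sum_{1\le i\le j\le r-1}k_i k_j$ on the coset $\vec k\in(a/r,\dots,a/r)+\mathbb{Z}^{r-1}$.

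The step I expect to be the main obstacle is this last identification, i.e.\ reproducing the quadratic form and its offset exactly rather than merely up to some unspecified positive-definite form. I would track $\ch_2(\pi_{\ast}E)$, hence $\Delta$, under $f^{\ast}$ together with the elementary modifications along $C$, using $C^2=-1$ and $K_{\wP}=f^{\ast}K_{\mathbb{P}^2}+C$. A useful consistency check is that the target form has Gram matrix $I_{r-1}+J_{r-1}$, with $J_{r-1}$ the all-ones matrix, and hence determinant $r$, matching both the rank and the $\tfrac{a}{r}$-offset of the lattice coset. Finally I would reconcile the conventions of~\cite{TodS} with the present normalization $\DT(r,l)=\sum_s\DT(r,l,s)(-q^{\frac{1}{2r}})^{l^2-2rs}$ from~(\ref{DTHrl}), propagating the Behrend-function signs through~(\ref{DT=Eu}), so that no spurious sign or fractional power of $q$ remains.
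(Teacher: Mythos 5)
Your high-level strategy coincides with the paper's: both derive (\ref{blow:DT}) from the flop-interpretation blow-up formula of~\cite{TodS} and reconcile conventions through (\ref{DT=Eu}) and (\ref{DTEuH}), and your side remarks are correct (the sign prefactors match because $\chi(\oO_{\wP})=\chi(\oO_{\mathbb{P}^2})=1$; the Gram matrix of $\vartheta_{r,a}$ is $I_{r-1}+J_{r-1}$, of determinant $r$). The genuine gap is at the step you yourself flag as the main obstacle: you never pin down the universal series multiplying $\DT(r,l)$. Your plan is to identify it by a heuristic decomposition --- $\eta(q)^{-r}$ from ``partition-type degrees of freedom'' on $C$ and $\vartheta_{r,a}(q)$ from distributing line-bundle degrees along $C$ --- to be justified by tracking $\ch_2$ under elementary modifications. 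That is not a proof, and in the generalized DT setting (non-coprime $(r,l)$, strictly semistable sheaves, invariants defined through the Hall-algebra logarithm) making such a sheaf-by-sheaf decomposition rigorous would amount to reproving the blow-up formula rather than using it; it is also not how~\cite{TodS} argues (the result there comes from flop equivalences and wall-crossing, not from a geometric decomposition along $C$).

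What closes the gap in the paper is quoting the precise statement of~\cite[Theorem~4.3]{TodS}: a two-variable identity
\begin{align*}
\sum_{s, a} \Eu_{H_0}(r, (l, a), -s)
q^{\frac{r}{12}-\frac{a}{2}+s}t^{\frac{r}{2}-a}
=\sum_{s} \Eu_{H}(r, l, -s)q^{s}
\cdot \eta(q)^{-r} \cdot
\vartheta_{1, 0}(q, t)^{r}
\end{align*}
with $\vartheta_{1,0}(q,t)=\sum_{k\in \mathbb{Z}} q^{\frac{1}{2}\left( k + \frac{1}{2} \right)^2 }t^{k+\frac{1}{2}}$, where the extra variable $t$ records $c_1\cdot C$; moreover the left-hand side makes sense for the non-ample class $H_0$ by the boundedness result of~\cite[Proposition~2.17]{TodS}, a point your write-up skips. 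Given this, the rest is formal bookkeeping: extracting the coefficient of $t^{\frac{r}{2}-a}$ from $\vartheta_{1,0}(q,t)^{r}$ yields the constrained sum $\sum_{k_1+\cdots+k_r=-a} q^{\frac{1}{2}(k_1^2+\cdots+k_r^2)-\frac{a^2}{2r}}$, converting $\Eu$ into $\DT$ on both sides via (\ref{DTEuH}) cancels the signs and produces the prefactor $q^{\frac{r}{24}}\eta(q)^{-r}$, and the substitution $k_r=-a-k_1-\cdots-k_{r-1}$ identifies the constrained sum with $\vartheta_{r,a}(q)$. So the missing ingredient in your proposal is precisely the input you treat as a black box: the two-variable form of the identity in~\cite{TodS} and the coefficient extraction in $t$, which replace your unproven decomposition argument.
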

\begin{proof}
If $r$ and $l$ are coprime, the 
result essentially follows from~\cite{Yo1}, \cite{WZ}, \cite{GoTheta}. 
In a general case, 
we use the blow-up formula in~\cite{TodS} for the 
invariants $\Eu_H(\gamma)$ obtained as an application 
of the flop transformation formula of 
generalized DT type invariants. 
We note that, although 
$H_0$ is not ample, 
 the LHS of (\ref{blow:DT})
is well-defined due to the boundedness of $\mu_{H_0}$-semistable 
sheaves on $\widehat{\mathbb{P}}^2$ (cf.~\cite[Proposition~2.17]{TodS}). 
By~\cite[Theorem~4.3]{TodS}, we have 
\begin{align}\notag
\sum_{s, a} \Eu_{H_0}(r, (l, a), -s)
&q^{\frac{r}{12}-\frac{a}{2}+s}t^{\frac{r}{2}-a} \\
\label{Eu:blow}
&=\sum_{s} \Eu_{H}(r, l, -s)q^{s} 
\cdot \eta(q)^{-r} 
\vartheta_{1, 0}(q, t)^{r}. 
\end{align}
 Here $\eta(q)$ is given by (\ref{eta}) 
and $\vartheta_{1, 0}(q, t)$ is given by
\begin{align*}
\vartheta_{1, 0}(q, t)=\sum_{k\in \mathbb{Z}}
q^{\frac{1}{2}\left( k + \frac{1}{2} \right)^2 }t^{k+\frac{1}{2}}. 
\end{align*}
The formulas (\ref{DTEuH}) and (\ref{Eu:blow})
immediately imply 
\begin{align*}
&\DT_{H_0}(r, (l, a)) \\
&=q^{\frac{r}{24}}\eta(q)^{-r}
\cdot \left(\sum_{\begin{subarray}{c}
k_1, \cdots, k_r \in \mathbb{Z} \\
k_1 + \cdots + k_r=-a\end{subarray}}  
q^{\frac{1}{2}(k_1^2 + \cdots + k_r^2)-\frac{a^2}{2r}}
\right) \cdot \DT(r, l). 
\end{align*}
By the substitution $k_r=-a-k_1 - \cdots - k_{r-1}$, it is 
straightforward to check that
\begin{align*}
\sum_{\begin{subarray}{c}
k_1, \cdots, k_r \in \mathbb{Z} \\
k_1 + \cdots + k_r=-a\end{subarray}}  
q^{\frac{1}{2}(k_1^2 + \cdots + k_r^2)-\frac{a^2}{2r}}=\vartheta_{r, a}(q). 
\end{align*}
\end{proof}

\subsection{Combinatorial generating series}
In this subsection, we introduce 
some generating series defined by 
the combinatorial numbers in Subsection~\ref{subsec:wcf}. 
 For $t\in \mathbb{R}$, we 
set \begin{align*}
H_t \cneq f^{\ast}H -t C \in \NS(\widehat{\mathbb{P}}^2)_{\mathbb{R}}. 
\end{align*}
Note that $H_t$ is ample for $t \in (0, 1)$, 
$H_0=f^{\ast}H$ is nef and big, and 
\begin{align*}
F \cneq H_1 =f^{\ast}H -C
\end{align*}
is a fiber class of the $\mathbb{P}^1$-fibration
$\widehat{\mathbb{P}}^2 \to \mathbb{P}^1$. 
Also we denote by $\Lambda \subset H^{\ast}(\wP, \mathbb{Q})$
the image of $\cl$ in (\ref{cl}) for $S=\wP$. 
Let us take $m\in \mathbb{Z}_{\ge 1}$ and 
\begin{align*}
r_1, \cdots, r_m \in \mathbb{Z}_{\ge 1}, \ 
\beta_1, \cdots, \beta_m \in \NS(\wP).
\end{align*}
We set
\begin{align*}
&S(\{(r_i, \beta_i)\}_{i=1}^{m}, H, F_{+})
\cneq \lim_{t \to 1-0}
S(\{(r_i, \beta_i, 0)\}_{i=1}^{m}, H_0, H_t) \\
&U(\{(r_i, \beta_i)\}_{i=1}^{m}, H, F_{+})
\cneq \lim_{t \to 1-0}
U(\{(r_i, \beta_i, 0)\}_{i=1}^{m}, H_0, H_t). 
\end{align*}
Here we regard $(r_i, \beta_i, 0)$ as an element\footnote{The choice $0$ in the $H^4$-component can be arbitrary, since 
the slope is independent of the second Chern character.}
of $\Lambda$, and 
$S$, $U$ are combinatorial numbers
 in Subsection~\ref{subsec:wcf}. 
Obviously the limits of the RHS are well-defined. 

We introduce some more 
notation. For $r\ge 1$, we set
\begin{align*}
&\NS_{< r}(\wP) \\
&\cneq \{ x f^{\ast}H + yC : x, y \in \mathbb{Z}, \ 
0\le x \le r-1, 0\le y \le r-1 \}. 
\end{align*}
Note that $\NS_{< r}(\wP)$ is a finite subset of 
$\NS(\wP)$. 
Also we denote by $G'(m)$
the set of oriented graphs $G$ with 
vertex a subset in $\{1, \cdots, m\}$, which 
may not be connected nor simply connected, 
and $i \to j$ implies $i\le j$. 
Note that $G(m) \subset G'(m)$, where $G(m)$ is the 
set of graphs in Subsection~\ref{subsec:wcf}. 
\begin{defi}
Given data
\begin{align}\label{data:W}
&l \in \mathbb{Z}, \ m \in \mathbb{Z}_{\ge 1}, \ G \in G'(m) \\
\notag
&r_1, \cdots, r_m \in \mathbb{Z}_{\ge 1}, \ 
\overline{\beta}_i \in \NS_{< r_i}(\wP) \ (1\le i\le m)
\end{align}
we define
the following generating series
\begin{align}\notag
S_{(r_1, \overline{\beta}_1), \cdots, (r_m, \overline{\beta}_m)}^{l, G}(q)
\cneq \sum_{\begin{subarray}{c}
\beta_i \in \NS(\wP), \
\beta_i \equiv \overline{\beta}_i \ (\text{\rm{mod} } r_i) \\
\beta_1 + \cdots + \beta_m=(l, 1-l) 
\end{subarray}}
S(\{(r_i, \beta_i)\}_{i=1}^{m}, H, F_{+}) \\
\label{S(q)}
\cdot \prod_{i \to j \text{ \rm{in} } G} K_{\wP}(r_j \beta_i - r_i \beta_j) q^{-\sum_{1\le i<j \le m} \frac{(r_j \beta_i -r_i \beta_j)^2}{2r r_i r_j}} \\
\notag
U_{(r_1, \overline{\beta}_1), \cdots, (r_m, \overline{\beta}_m)}^{l, G}(q)
\cneq \sum_{\begin{subarray}{c}
\beta_i \in \NS(\wP), \
\beta_i \equiv \overline{\beta}_i \ (\text{\rm{mod} } r_i) \\
\beta_1 + \cdots + \beta_m=(l, 1-l)
\end{subarray}}
U(\{(r_i, \beta_i)\}_{i=1}^{m}, H, F_{+}) \\
\label{W(q)}
\cdot \prod_{i \to j \text{ \rm{in} } G} K_{\wP}(r_j \beta_i - r_i \beta_j) q^{-\sum_{1\le i<j \le m} \frac{(r_j \beta_i -r_i \beta_j)^2}{2r r_i r_j}}.
\end{align}
\end{defi}
The generating series (\ref{S(q)}), (\ref{W(q)}) are well-defined. 
Indeed, we have the following proposition: 
\begin{prop}\label{prop:SU}
The series (\ref{S(q)}), (\ref{W(q)}) are elements of $\mM$. 
\end{prop}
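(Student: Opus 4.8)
The plan is to pass to explicit coordinates on $\NS(\wP)=\mathbb{Z}[f^{\ast}H]\oplus\mathbb{Z}[C]$, write $\beta_i=(x_i,y_i)$, and read off both the slopes and the quadratic exponent. Since $(f^{\ast}H)^2=1$, $C^2=-1$ and $f^{\ast}H\cdot C=0$, one has $\beta_i^2=x_i^2-y_i^2$, together with $\mu_{H_0}((r_i,\beta_i))=x_i/r_i$ and $\mu_{H_t}((r_i,\beta_i))=(x_i+ty_i)/r_i$. First I would record the standard identity $\sum_{i<j}(r_j\beta_i-r_i\beta_j)^2/(r_ir_j)=r\sum_i\beta_i^2/r_i-(\sum_i\beta_i)^2$, which shows that on the lattice $\Gamma=\{(\beta_1,\dots,\beta_m):\sum_i\beta_i=0\}$ of rank $2(m-1)$ the exponent in (\ref{S(q)}) defines a quadratic form $Q$ equal, up to an additive constant, to $-\tfrac12\sum_i x_i^2/r_i+\tfrac12\sum_i y_i^2/r_i$. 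Thus $Q$ is negative definite on the $x$-directions and positive definite on the $y$-directions, the two subspaces are $B$-orthogonal, and $(\Gamma,B)$ with $B=2Q$ has index $(m-1,m-1)$. The coset $\overline{\nu}+\Gamma$ and the integer $N$ are fixed by the constraints $\sum_i\beta_i=(l,1-l)$ and $\beta_i\equiv\overline{\beta}_i$.

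Next I would compute the limiting combinatorial number $S(\{(r_i,\beta_i)\}_{i=1}^m,H,F_{+})$. By definition it is a product over $i=1,\dots,m-1$ of factors governed by (\ref{S1}) and (\ref{S2}): the $H_0$-comparison is the sign of the pure-$x$ form $P_i=r_ix_{i+1}-r_{i+1}x_i$, while the $H_t$-comparison of the partial sums, evaluated as $t\to1-0$, is the sign of $v_i(t)=A_i+(t-1)D_i$ where $A_i$ is the $F$-slope comparison form and $D_i$ is a pure-$y$ form. Off the loci where these forms vanish, a short case-check turns the $i$-th factor into $-\tfrac12\bigl(\sgn(B(c_i,\nu))-\sgn(B(c_i',\nu))\bigr)$ with $c_i\leftrightarrow P_i$ and $c_i'\leftrightarrow-A_i$. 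Here $c_i$ lies in the negative-definite $x$-space, giving condition (ii); because $A_i$ weights the $x$- and $y$-coordinates equally while $Q$ is of type $-x^2+y^2$, the vectors $c_i'$ are $B$-null with $B(c_i',c_j')=0$, and a direct computation gives $B(c_i,c_j')=0$ for $i\neq j$ and $B(c_i,c_i')<0$, which is condition (iii). The edge factors $K_{\wP}(r_j\beta_i-r_i\beta_j)$ are linear in $\nu$ and supply the polynomial $\prod_jB(\alpha_j,\nu)$, so the generic part of (\ref{S(q)}) is exactly a series of the shape (\ref{Theta}).

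The main obstacle is that $c_i'$ need not satisfy condition (iv): $A_i$ does vanish on $\overline{\nu}+\Gamma$, and there the subleading term $D_i$ takes over in $\sgn(v_i)$. I would deal with this by stratifying the lattice sum according to the subset $T\subseteq\{1,\dots,m-1\}$ of indices with $A_i=0$. On the subspace $V_T=\{A_i=0:i\in T\}$ the $i\in T$ factors become sign-differences in the pure-$y$ forms $D_i$ (the doubly degenerate locus $A_i=D_i=0$ being handled by a further restriction), so each stratum is precisely a series of the form treated in Lemma~\ref{lem:classical} and hence lies in $\mM$; in that lemma the restriction $B(\nu,c)=0$ is applied to the forms cutting out $V_T$. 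Summing over the finitely many strata shows that the $S$-series (\ref{S(q)}) belongs to $\mM$.

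Finally, for the $U$-series (\ref{W(q)}) I would expand $U$ via (\ref{defi:U}) as a finite $\mathbb{Q}$-linear combination of products of numbers $S(\{\Upsilon_a\},H,F_{+})$ attached to the merged classes $\Upsilon_a=\sum_{j\in\psi^{-1}(a)}\beta_j$. Each such merged $S$-number is again a product of limiting sign-factors, now of the $H_0$- and $F$-slope comparison forms of the $\Upsilon_a$, which are still linear forms in the original $\beta_i$; the identical coordinate computation and stratification place each resulting series in $\mM$. Since $\mM$ is a $\mathbb{Q}$-algebra (Lemma~\ref{lem:element}), the finite sums and products survive, giving (\ref{W(q)})$\in\mM$ as well. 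I expect the genuinely delicate point to be the bookkeeping of the degenerate strata and the verification of (i)--(v) for the data produced on each stratum; this is exactly the content that Lemma~\ref{lem:classical} is designed to absorb, while the convergence of every series involved is guaranteed by Lemma~\ref{lem:converge}.
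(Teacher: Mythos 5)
Your lattice-theoretic setup is sound and in substance matches the paper's proof in different coordinates: the signature count $(m-1,m-1)$, the realization of the two slope comparisons as $B(c_i,\nu)$ and $B(c_i',\nu)$ with the $c_i$ spanning a negative definite subspace and the $c_i'$ isotropic and mutually orthogonal (both hinging on $H_0^2=1$, $F^2=0$), the absorption of the edge factors $K_{\wP}(r_j\beta_i-r_i\beta_j)$ into $\prod_j B(\alpha_j,\nu)$, and the reduction of the $U$-series to $S$-numbers of merged classes $\Upsilon_a$ using that $\mM$ is an algebra. However, you have interchanged the two degeneracies, and this leaves a genuine gap. The locus you stratify over is empty: since the classes sum to $\beta=(l,1-l)$ and $F\cdot\beta=1$, the $F$-comparison of a partial sum is, up to a positive factor, $F\cdot(\beta_1+\cdots+\beta_i)-(r_1+\cdots+r_i)/r$, an integer minus a non-integer, so $A_i$ never vanishes on $\overline{\nu}+\Gamma$. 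This is precisely Lemma~\ref{lem:F0} of the paper (which you never invoke); it gives condition (iv) for free, makes all of your strata with $T\neq\emptyset$ vanish identically, and is also what kills the $m''\ge 2$ terms of (\ref{defi:U}) through the constraint (\ref{m''}).

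The degeneracy that actually occurs is the one you set aside after the phrase ``off the loci where these forms vanish'': ties $P_i=0$ in the $H_0$-comparison. Such lattice points are in general present in the sum (\ref{S(q)}) and contribute nontrivially, because (\ref{S1}) uses $\le$ while (\ref{S2}) uses strict inequality: at a tie the $i$-th factor of $S$ equals $-1$ or $0$, whereas your expression $-\tfrac12\bigl(\sgn(B(c_i,\nu))-\sgn(B(c_i',\nu))\bigr)$ equals $\pm\tfrac12$ there. Hence the identity you assert between (\ref{S(q)}) and a combination of series of the form (\ref{Theta}) is false as stated; it is off by correction terms supported on the loci $\{B(c_i,\nu)=0,\ i\in T\}$ with $T\neq\emptyset$. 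The repair is exactly your stratification device aimed at the other family of forms: write the $i$-th factor (in the paper's normalization) as $\tfrac12\bigl(\sgn(B(c_i,\nu))-\delta_{0,B(c_i,\nu)}-\sgn(B(c_i',\nu))\bigr)$, expand the product over the subsets $T$ where the Kronecker terms act, and apply Lemma~\ref{lem:classical} to the resulting restricted sums. This is what the paper does, and the same correction is needed once more for the merged $S$-numbers in your treatment of (\ref{W(q)}), where in addition the within-group equal-slope constraints must be built into the sublattice before the conditions (i)--(v) are checked.
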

Here $\mM$ is given in Definition~\ref{defi:subM}. 
The proof of Proposition~\ref{prop:SU} will be
given in Subsection~\ref{subsec:propS} and Subsection~\ref{subsec:propU}.

\subsection{Rank reduction formula}
In this subsection, we apply Theorem~\ref{thm:Joy4}
to describe
$\DT(r, l)$ for $r\ge 2$ in terms of the 
series (\ref{W(q)}) and the series $\DT(r', l')$ with $r'<r$. 
We first collect some well-known lemmas: 
\begin{lem}\label{lem:wall}
For a fixed $\gamma=(r, \beta, s) \in \Lambda$, there exist 
\begin{align*}
0=t_0<t_1< \cdots < t_m=1
\end{align*}
 such that 
the stack $\mathfrak{M}^{ss}_{H_t}(\gamma)$ is constant 
on $t \in (t_i, t_{i+1})$. 
\end{lem}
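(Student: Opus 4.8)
The plan is to show that $H_t$-slope semistability of a sheaf of class $\gamma$ changes only at finitely many values of $t\in[0,1]$, the ``walls'', and is locally constant in between. First I would recall that a pure two-dimensional sheaf $E$ with $\cl(E)=\gamma$ is $H_t$-slope semistable exactly when $\mu_{H_t}(F)\le\mu_{H_t}(E)$ for every nonzero proper subsheaf $F\subset E$; since $K_{\wP}\cdot H_t<0$ for all $t\in[0,1]$, Lemma~\ref{rmk:standard} shows $E$ and all its subsheaves are $\oO_{\wP}$-modules, so this is really a statement about sheaves on $\wP$. Writing $\cl(F)=(r_F,\beta_F,s_F)$ and $\xi_F\cneq r\beta_F-r_F\beta\in\NS(\wP)$, the comparison $\mu_{H_t}(F)\le\mu_{H_t}(E)$ is equivalent to $\xi_F\cdot H_t\le 0$. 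Because
\[
\xi_F\cdot H_t=\xi_F\cdot f^{\ast}H-t\,(\xi_F\cdot C)
\]
is affine-linear in $t$, its sign is constant away from the single value $t=(\xi_F\cdot f^{\ast}H)/(\xi_F\cdot C)$ when $\xi_F\cdot C\neq 0$, and constant on all of $[0,1]$ otherwise. Thus each numerical class $\xi_F$ contributes at most one candidate wall in $(0,1)$.

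The key finiteness input is that the set of all $E\in\Coh_c(X)$ which are $H_t$-slope semistable of class $\gamma$ for some $t\in[0,1]$ forms a bounded family; this is the same boundedness that makes the left-hand side of Proposition~\ref{prop:blow} well defined, and it persists at the endpoint $t=0$ where $H_0$ is only nef and big (cf.~\cite[Proposition~2.17]{TodS}). Granting this, I would apply Grothendieck's boundedness lemma: the subsheaves of members of a bounded family whose slopes are bounded below themselves form a bounded family, so their first Chern classes $\beta_F$ range over a finite subset of $\NS(\wP)$. A subsheaf can produce a wall at $t_0$ only when $\mu_{H_{t_0}}(F)=\mu_{H_{t_0}}(E)$; as $t_0$ ranges over the compact interval $[0,1]$ these slopes stay in a bounded range, so the relevant $F$ have bounded slope and hence finitely many classes $\beta_F$. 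Combined with $0<r_F<r$, this gives finitely many possible $\xi_F$, and by the first paragraph each yields at most one wall in $(0,1)$, so the set of walls is finite.

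Collecting these walls and taking $0=t_0<t_1<\cdots<t_m=1$ to be them together with the endpoints, I would conclude by noting that on each open chamber $(t_i,t_{i+1})$ the sign of $\xi_F\cdot H_t$ is independent of $t$ for every relevant $\xi_F$. Therefore the set of $H_t$-slope semistable sheaves of class $\gamma$ is constant on the chamber, and the substacks $\mathfrak{M}^{ss}_{H_t}(\gamma)\subset\mathfrak{M}(\gamma)$ coincide for all such $t$. The main obstacle is the finiteness of the walls, i.e.\ bounding the Chern classes of all potentially destabilizing subsheaves uniformly in $t$; this rests entirely on the uniform boundedness of the family over the whole interval $[0,1]$, including the non-ample endpoint $t=0$, after which the argument reduces to the elementary affine-linear analysis above.
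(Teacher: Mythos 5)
Your chamber-constancy argument and the observation that each numerical class $\xi_F=r\beta_F-r_F\beta$ cuts out at most one wall are fine; the genuine gap is the finiteness input. The uniform boundedness you assert --- that the sheaves of class $\gamma$ which are $H_t$-slope semistable for \emph{some} $t\in[0,1]$ form a bounded family --- is false at the endpoint $t=1$ and unjustified on $[0,1)$. For a counterexample at $t=1$, take $\gamma=(2,0,0)$ and $E_a=\oO_{\wP}(aF)\oplus\oO_{\wP}(-aF)$ for $a\in\mathbb{Z}_{\ge 1}$: since $F^2=0$ all $E_a$ have class $\gamma$, and since both summands have $\mu_F=0$ each $E_a$ is $H_1=F$-slope semistable, giving an unbounded family (each $E_a$ is $H_t$-unstable for $t<1$, which is why this is invisible in the interior). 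The reference you invoke, \cite[Proposition~2.17]{TodS}, gives boundedness only for the single nef and big polarization $H_0$, i.e.\ the endpoint $t=0$; it says nothing about uniformity in $t$, and in particular nothing as $t\to 1-0$. Over $[0,1)$ the union of the semistable loci is indeed bounded, but proving this is essentially equivalent to proving that walls do not accumulate at $t=1$ --- which is the content of the lemma (note $t_m=1$ in the statement) --- so assuming it is circular. There is also a secondary slip in the Grothendieck-lemma step: that lemma requires a slope bound with respect to a \emph{fixed} ample class, whereas you only control $\mu_{H_{t_0}}(F)$ for a $t_0$ depending on $F$; writing $\beta_F=(x,y)$, boundedness of $x+t_0y$ does not bound $(x,y)$, precisely because of cancellation as $t_0\to 1$.

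The paper's proof avoids boundedness entirely and is purely numerical, and this is exactly the ingredient your argument is missing. A wall forces a decomposition $\gamma=\gamma_1+\gamma_2$ into classes $\gamma_i=(r_i,\beta_i,s_i)$ of $H_t$-semistable sheaves of equal $H_t$-slope. Equality of slopes plus the Hodge index theorem (applicable since $H_t^2=1-t^2>0$ and $H_t$ is nef for $t\in[0,1)$) gives the upper bound $(r\beta_1-r_1\beta)^2\le 0$, while the Bogomolov inequality $s_i\le \beta_i^2/2r_i$ for the two semistable factors gives the lower bound $(r\beta_1-r_1\beta)^2\ge -r_1r_2\Delta(\gamma)$. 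Writing $\xi=r\beta_1-r_1\beta=xf^{\ast}H+yC$, the wall condition $\xi\cdot H_t=x+ty=0$ with $t\in[0,1)$ forces $|x|<|y|$ unless $\xi=0$, and then $|x|+|y|\le(|y|-|x|)(|y|+|x|)=y^2-x^2\le r_1r_2\Delta(\gamma)$; together with $0<r_i<r$ this leaves only finitely many $\xi$, hence finitely many walls in $[0,1)$, with no possible accumulation at $t=1$. This two-sided bound on $\xi^2$ replaces your boundedness claim; once it is supplied, your final paragraph (constancy of the semistable locus on each chamber) goes through as written.
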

\begin{proof}
It is enough to show that the set of $t \in [0, 1)$ 
satisfying the following: 
there exist $\mu_{H_t}$-semistable 
sheaves $E_i$ with $\cl(E_i)=\gamma_i=(r_i, \beta_i, s_i) \in \Lambda$ 
for $i=1, 2$
such that 
\begin{align*}
\mu_{H_t}(\gamma_i)=\mu_{H_t}(\gamma), \ 
\gamma_1 + \gamma_2=\gamma.
\end{align*}
By the left equality and the Hodge index theorem, we 
have 
\begin{align}\label{Hodge}
(r\beta_1 - r_1 \beta)^2 \le 0. 
\end{align} 
On the other hand,
we have $s_i \le \beta_i^2/2r_i$
by Bogomolov inequality, hence  
$\beta^2_1/r_1 + \beta_2^2/r_2 \ge 2s$.  
By substituting $\beta_2=\beta-\beta_1$,
we obtain 
\begin{align}\label{below}
-r_1 r_2 \Delta(\gamma) \le (r\beta_1 - r_1 \beta)^2. 
\end{align}
Note that there is only a finite number of possible $r_i$. 
By (\ref{Hodge}), (\ref{below}), 
the possible $\beta_1$ are also finite. 
 Hence the possible $t\in [0, 1)$ is finite. 
\end{proof}

\begin{lem}\label{lem:F}
For $\gamma=(r, \beta, s) \in \Lambda$
with $r\ge 2$ and $F \cdot \beta=1$,
we 
have $\mathfrak{M}_{H_t}^{ss}(\gamma)=\emptyset$
for $t \to 1-0$.  
\end{lem}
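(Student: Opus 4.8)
The plan is to show that any sheaf $E$ with $\cl(E)=\gamma$ that is $H_t$-slope semistable for $t$ near $1$ would admit a subsheaf destabilizing it with respect to the fiber polarization $F=H_1$, and then to upgrade this fiberwise obstruction to the genuine emptiness statement using the wall structure of Lemma~\ref{lem:wall}.

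First I would reduce to honest torsion-free sheaves on the surface. Since $K_{\wP}\cdot H_t=t-3<0$ for $t\in[0,1)$ (using $f^{\ast}H\cdot C=0$ and $C^2=-1$), Lemma~\ref{rmk:standard} shows that any $H_t$-slope semistable $E\in\Coh_c(X)$ is a pure two-dimensional $\oO_{\wP}$-module, i.e. a torsion-free sheaf on $S=\wP$. Thus I may work with $\mu_{H_t}$-semistability of torsion-free sheaves on $\wP$, where $\mu_{H_t}(E)=(c_1(E)\cdot H_t)/r$ depends affine-linearly on $t$ and specializes at $t=1$ to $\mu_F(E)=(c_1(E)\cdot F)/r=1/r$.

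The heart of the argument is restriction to a general fiber $f_0\cong\mathbb{P}^1$ of the fibration $\wP\to\mathbb{P}^1$. There $E|_{f_0}\cong\bigoplus_{i=1}^{r}\oO(a_i)$ with $a_1\ge\cdots\ge a_r$ and $\sum_i a_i=c_1(E)\cdot F=1$. Since these are integers summing to $1$ with $r\ge 2$, they cannot all coincide, so $a_1\ge 1>1/r$ and the relative Harder-Narasimhan filtration of $E$ along the fibration is non-trivial. Taking the saturation in $E$ of its top piece produces a subsheaf $0\neq E'\subsetneq E$ of some rank $1\le r_1\le r-1$ with $c_1(E')\cdot F=r_1 a_1$, hence $\mu_F(E')=a_1>1/r=\mu_F(E)$. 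Because $t\mapsto\mu_{H_t}(E')-\mu_{H_t}(E)$ is continuous (indeed affine) in $t$ and strictly positive at $t=1$, it is strictly positive on some interval $(t_0(E),1)$; for such $t$ the subsheaf $E'$ violates $\mu_{H_t}$-semistability of $E$.

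Finally I would eliminate the $E$-dependence of $t_0(E)$ via Lemma~\ref{lem:wall}: choose $t_{m-1}<1$ so that $\mathfrak{M}_{H_t}^{ss}(\gamma)$ is constant on $(t_{m-1},1)$. Were it non-empty, a sheaf $E$ in it would be $H_t$-semistable for every $t\in(t_{m-1},1)$; but the previous step makes $E$ unstable on $(t_0(E),1)$, and the overlap $(\max(t_{m-1},t_0(E)),1)$ is non-empty, a contradiction. Hence $\mathfrak{M}_{H_t}^{ss}(\gamma)=\emptyset$ as $t\to 1-0$. The step I expect to demand the most care is the construction of the global destabilizing subsheaf $E'$ -- justifying that the fiberwise Harder-Narasimhan top piece spreads out to a saturated subsheaf of $E$ over all of $\wP$, and checking that the intersection number $c_1(E')\cdot F$ really equals the fiberwise degree $r_1 a_1$ -- whereas the final uniformity argument through the wall structure is then routine.
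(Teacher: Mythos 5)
Your proof is correct, and its endpoint is the same as the paper's: both arguments ultimately rest on the non-existence of semistable sheaves of rank $\geq 2$ and degree $1$ on $\mathbb{P}^1$, reached by restricting to fibers of the ruling $\wP \to \mathbb{P}^1$, and both invoke Lemma~\ref{lem:wall} to make the assertion about $t \to 1-0$ uniform in $E$. The middle step, however, is genuinely different. The paper passes to the limit polarization: if $E$ is $H_t$-semistable for all $t$ close to $1$, then by continuity of slopes in $t$ it is $F$-slope semistable, and the paper then cites Mozgovoy~\cite[Lemma~4.3]{Mrule}, which says that an $F$-slope semistable sheaf on $\wP$ restricts to a semistable sheaf on a generic fiber; the rank/degree obstruction on $\mathbb{P}^1$ gives the contradiction. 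You instead run the argument in contrapositive, self-contained form: the splitting type of $E|_{f_0}$ must be unbalanced ($a_1 \geq 1 > 1/r$), the saturation $E'$ of the top piece of the relative Harder--Narasimhan filtration satisfies the \emph{strict} inequality $\mu_F(E') > \mu_F(E)$, and affine-linearity of $t \mapsto \mu_{H_t}(E') - \mu_{H_t}(E)$ destabilizes $E$ for all $t$ in some interval $(t_0(E),1)$, which Lemma~\ref{lem:wall} converts into emptiness of the stack. What your route buys is independence from the external reference (you essentially reprove the needed case of Mozgovoy's lemma), at the cost of the spreading-out construction; what the paper's route buys is brevity, since working at the limit polarization $F$ only needs the non-strict semistability inequality and delegates the HN argument to the citation. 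The point you flagged as delicate is fine: saturation changes $c_1(E')$ only by an effective divisor supported on fibers, and since $F^2 = 0$ (or simply since $F$ is nef) the $F$-degree does not drop, so $\mu_F(E') \geq a_1 > \mu_F(E)$ survives saturation, which is all your continuity step requires. Your explicit reduction to sheaves on $\wP$ via $K_{\wP} \cdot H_t = t-3 < 0$ and Lemma~\ref{rmk:standard} is also correct, and is implicit rather than spelled out in the paper.
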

\begin{proof}
By Lemma~\ref{lem:wall}, 
the moduli stack 
 $\mathfrak{M}_{H_t}^{ss}(\gamma)$
for $t\to 1-0$ is well-defined. 
Also if there is $[E] \in \mathfrak{M}_{H_t}^{ss}(\gamma)$
for $t \to 1-0$,  
then it must be $F$-slope semistable. 
By~\cite[Lemma~4.3]{Mrule}, any $F$-slope 
semistable sheaf on $\wP$
is restricted to a semistable 
sheaf on a generic fiber of $\wP \to \mathbb{P}^1$. 
Since there is no semistable sheaf on $\mathbb{P}^1$
 with rank bigger than or equal to two
and degree one, we have $\mathfrak{M}_{H_t}^{ss}(\gamma)=\emptyset$
for $t \to 1-0$. 
\end{proof}

By combining Theorem~\ref{thm:Joy4}, Proposition~\ref{prop:blow}
and the above two lemmas, we show the following: 
\begin{prop}\label{prop:reduction}
For any $r \in \mathbb{Z}_{\ge 2}$ and $l\in \mathbb{Z}$, we 
have the following formula: 
\begin{align}\notag
\DT(r, l)=&\sum_{\begin{subarray}{c}
m\ge 2, \ r_1, \cdots, r_m \in \mathbb{Z}_{\ge 1} \\
r_1+ \cdots + r_m=r
\end{subarray}}
\sum_{\begin{subarray}{c}
\overline{\beta}_i =(l_i, a_i) \in \NS_{<r_i}(\wP) \\
1\le i\le m
\end{subarray}}
\sum_{G \in G(m)} \frac{(-1)^{m}}{2^{m-1}} \\
\label{formula:reduction}
&\cdot 
U_{(r_1, \overline{\beta}_1), \cdots, (r_m, \overline{\beta}_m)}^{l, G}(q) 
\cdot
\vartheta_{r, 1-l}(q)^{-1}
\prod_{i=1}^{m} \vartheta_{r_i, a_i}(q) \cdot
\prod_{i=1}^{m} \DT(r_i, l_i). 
\end{align}
\end{prop}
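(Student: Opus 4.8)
The plan is to combine three ingredients: the wall-crossing formula of Theorem~\ref{thm:Joy4} applied to the polarization change from $H_0$ to $H_t$ as $t\to 1-0$ on $\wP$; the vanishing of Lemma~\ref{lem:F} which kills the $t\to 1$ limit for sheaves with $F\cdot\beta=1$; and the blow-up formula of Proposition~\ref{prop:blow} which relates $\DT_{H_0}(r,(l,a))$ on $\wP$ back to $\DT(r,l)$ on $\mathbb{P}^2$. The class we track is $\gamma=(r,(l,1-l),\ast)$ on $\wP$, whose $\NS$-part is chosen precisely so that $F\cdot(l,1-l)=(f^\ast H-C)\cdot(lf^\ast H+(1-l)C)=l\cdot 1-(1-l)(-1)=1$, putting us in the regime of Lemma~\ref{lem:F}.

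First I would run the wall-crossing formula \eqref{WCF} for the pair $(H_1,H_2)=(H_0,H_t)$ to express $\Eu_{H_t}(\gamma)$ for $t\to 1-0$ in terms of $\Eu_{H_0}(\gamma_i)$, summed over decompositions $\gamma_1+\cdots+\gamma_m=\gamma$, with combinatorial coefficients $U$ and products of the pairing $\chi(\gamma_i,\gamma_j)$ along the edges of graphs $G\in G(m)$. Using the pairing \eqref{pairing} on $\wP$, each edge factor is $\chi(\gamma_i,\gamma_j)=K_{\wP}(r_j\beta_i-r_i\beta_j)$, matching the products appearing in the definition \eqref{W(q)} of $U^{l,G}_{\cdots}(q)$. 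Since the left-hand side $\mathfrak{M}^{ss}_{H_t}(\gamma)=\emptyset$ for $t\to 1-0$ by Lemma~\ref{lem:F}, the combined expression must vanish, which after isolating the $m=1$ term (the single undecomposed $\gamma$) lets me solve for $\Eu_{H_0}(\gamma)$, equivalently $\DT_{H_0}(r,(l,1-l))$, in terms of strictly lower-rank contributions with $m\ge 2$. I then translate Euler numbers back to generalized DT invariants via \eqref{DT=Eu}, converting the signs and the discriminant-weighted $q$-exponents into the normalization used in the generating series \eqref{DTHrl}.

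Next I would assemble the $q$-series. On the right-hand side each $\gamma_i=(r_i,\beta_i,s_i)$ with $\beta_i\equiv\overline{\beta}_i$ modulo $r_i$ contributes a factor $\DT_{H_0}(r_i,\beta_i)$; applying Proposition~\ref{prop:blow} rewrites each such factor as $q^{r_i/24}\eta(q)^{-r_i}\vartheta_{r_i,a_i}(q)\DT(r_i,l_i)$ where $\overline{\beta}_i=(l_i,a_i)$. The Gaussian exponents $q^{\frac12(k_1^2+\cdots)}$ collected from the $\beta_i$-summation over each residue class reorganize into the $\vartheta_{r_i,a_i}(q)$ theta factors and, on the left, into $\vartheta_{r,1-l}(q)$; the $\eta$ powers telescope by $\sum r_i=r$, and the quadratic terms $q^{-(r_j\beta_i-r_i\beta_j)^2/(2rr_ir_j)}$ are exactly the exponents built into \eqref{W(q)}. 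The sign $(-1)^m/2^{m-1}$ comes from combining the $1/2^{m-1}$ in \eqref{WCF} with the sign bookkeeping from \eqref{DT=Eu} across the $m$ factors after cancellation against the left-hand side's sign.

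\textbf{The hard part will be} the bookkeeping of exponents and signs: verifying that the discriminant-dependent signs $(-1)^{\Delta(\gamma_i)}$ in \eqref{DT=Eu} recombine into a clean overall $(-1)^m$, and that the various $q$-exponents from Bogomolov-shifted Chern characters, the blow-up theta factors, and the edge-weight quadratics $(r_j\beta_i-r_i\beta_j)^2$ all assemble into precisely the cross-term $-\sum_{i<j}(r_j\beta_i-r_i\beta_j)^2/(2rr_ir_j)$ recorded in \eqref{W(q)}. I expect the key algebraic identity to be the completion-of-squares relation $\sum_i\beta_i^2/r_i - (\sum_i\beta_i)^2/r = \sum_{i<j}(r_j\beta_i-r_i\beta_j)^2/(rr_ir_j)$, which is what forces the $\vartheta_{r,1-l}(q)^{-1}$ on the right-hand side and ties the per-summand exponents to the separated theta factors. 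Once this identity is in place, collecting the residue classes $\overline{\beta}_i\in\NS_{<r_i}(\wP)$ and recognizing the resulting sum as $U^{l,G}_{(r_1,\overline{\beta}_1),\cdots}(q)$ gives the stated formula \eqref{formula:reduction}.
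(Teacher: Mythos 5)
Your proposal is correct and takes essentially the same approach as the paper's proof: wall-crossing for $(H_0, H_t)$ with $t\to 1-0$, vanishing of the limit via Lemma~\ref{lem:F}, isolating the $m=1$ term, converting Euler numbers to DT invariants via (\ref{DTEuH}), and applying the blow-up formula (\ref{blow:DT}) to both sides, with your anticipated completion-of-squares identity being exactly the paper's equation (\ref{noting}). The sign bookkeeping you flag as the hard part does come out cleanly: the factor $(-1)^{r^2+1}$ on the left, $(-1)^{r_i^2+1}$ per factor on the right, and the minus sign from moving the $m=1$ term combine to $(-1)^m$ because $r^2 \equiv \sum_i r_i^2 \pmod{2}$, while the discriminant-dependent signs are already absorbed into the $(-q^{\frac{1}{2r}})^{\Delta}$ normalization of the generating series (\ref{DTHrl}).
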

\begin{proof}
We apply Theorem~\ref{thm:Joy4} 
for $S=\wP$, $(H_1, H_2)=(H_0, H_t)$
with $t \in (0, 1)$, 
and $\gamma=(r, \beta, s)$
with $\beta=(l, 1-l)$. 
Using (\ref{pairing}) and (\ref{WCF}), 
we obtain the identity\footnote{It is straightforward to 
generalize the result of Theorem~\ref{thm:Joy4} for 
non-ample $H_0$. }:  
\begin{align}\notag
\Eu_{H_t}(r, \beta, s )=
\sum_{\begin{subarray}{c}
m\ge 1, \ \gamma_i=(r_i, \beta_i, s_i) \in \Lambda \\
\gamma_1+ \cdots + \gamma_m=\gamma  
\end{subarray}}
&\sum_{G \in G(m)}
\frac{1}{2^{m-1}}U(\{\gamma_i\}_{i=1}^{m}, H_0, H_t)
\\ \label{wall:E}
& \cdot \prod_{i \to j \text{ \rm{in} }G} K_{\wP}(r_j \beta_i - r_i \beta_j)
 \prod_{i=1}^{m} \Eu_{H_0}(r_i, \beta_i, s_i). 
\end{align}
Since $F \cdot \beta=1$, we have 
\begin{align*}
\lim_{t\to 1-0}
\Eu_{H_t}(r, \beta, s)=0
\end{align*}
 by Lemma~\ref{lem:F}. 
Therefore by taking $t \to 1-0$
of both sides of (\ref{wall:E}), 
and moving the $m=1$ term to the LHS, 
 we obtain the identity:
\begin{align*}
&\sum_{s} \Eu_{H_0}(r, \beta, s)q^{\frac{\beta^2}{2r}-s}=
-\sum_{\begin{subarray}{c}
m\ge 2, \ r_1, \cdots, r_m 
\in \mathbb{Z}_{\ge 1} \\
r_1 + \cdots + r_m=r
\end{subarray}}
\sum_{G \in G(m)} \frac{1}{2^{m-1}} \\
&\sum_{\begin{subarray}{c}
\beta_i \in \NS(\wP), \ 1\le i\le m \\
\beta_1 + \cdots + \beta_m=\beta
\end{subarray}}
U(\{(r_i, \beta_i)\}_{i=1}^{m}, H, F_{+})
 \prod_{i \to j \text{ in }G}
K_{\wP}(r_j \beta_i - r_i \beta_j)q^{\frac{\beta^2}{2r}-\sum_{i=1}^{m}
\frac{\beta_i^2}{2r_i}} \\
&\hspace{65mm} \prod_{i=1}^{m} \left( \sum_{s_i}
\Eu_{H_0}(r_i, \beta_i, s_i) q^{\frac{\beta^2_i}{2r_i}-s_i} \right). 
\end{align*}
For $\beta_i \in \NS(\wP)$, let 
$\overline{\beta}_i \in \NS_{<r_i}(\wP)$ be the unique 
element such that 
\begin{align*}
\beta_i \equiv \overline{\beta}_i \ (\text{\rm{mod} } r_i).
\end{align*}
Then by (\ref{DTEuH}) and Lemma~\ref{lem:modr}, we have 
\begin{align*}
\sum_{s_i}
\Eu_{H_0}(r_i, \beta_i, s_i) q^{\frac{\beta^2_i}{2r_i}-s_i}
=(-1)^{r_i^2 +1} \DT_{H_0}(r_i, \overline{\beta}_i). 
\end{align*}
Also noting that 
\begin{align}\label{noting}
\frac{\beta^2}{2r}-\sum_{i=1}^{m}
\frac{\beta_i^2}{2r_i}=-\sum_{1\le i<j \le m} \frac{(r_j \beta_i -r_i \beta_j)^2}{2r r_i r_j}
\end{align}
we obtain the following identity: 
\begin{align*}
\DT_{H_0}(r, \beta)=\sum_{\begin{subarray}{c}
m\ge 2, \ r_1, \cdots, r_m \in \mathbb{Z}_{\ge 1} \\
r_1+ \cdots + r_m=r
\end{subarray}}
&\sum_{\begin{subarray}{c}
\overline{\beta}_i \in \NS_{<r_i}(\wP), \\
1\le i\le m.
\end{subarray}}
\sum_{G \in G(m)} \frac{(-1)^{m}}{2^{m-1}} \\
&\cdot 
U_{(r_1, \overline{\beta}_1), \cdots, (r_m, \overline{\beta}_m)}^{l, G}(q) 
\prod_{i=1}^{m} \DT_{H_0}(r_i, \overline{\beta}_i). 
\end{align*}
Applying (\ref{blow:DT}) to both sides of the above 
identity, we obtain the desired formula (\ref{formula:reduction}). 
\end{proof}
We have the following corollary which, 
together with Lemma~\ref{lem:element}, prove
Theorem~\ref{intro:main}. 
\begin{cor}\label{cor:main}
For any $r\in \mathbb{Z}_{\ge 1}$ and $l\in \mathbb{Z}$, there
exist classical data $\xi'$ such that 
\begin{align*}
q^{-\frac{r}{8}}
\eta(q)^{3r} \cdot 
\Theta_{\xi'}(q) \cdot \DT(r, l) \in \mM. 
\end{align*}
\end{cor}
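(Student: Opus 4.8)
The plan is to prove, by induction on $r$, the equivalent statement that for every $l$ there exist classical data $\xi'$ and an element $M\in\mM$ with
\begin{align*}
\DT(r,l)=q^{\frac{r}{8}}\eta(q)^{-3r}\,\Theta_{\xi'}(q)^{-1}\,M,
\end{align*}
which is exactly the assertion of the corollary after multiplying both sides by $q^{-\frac{r}{8}}\eta(q)^{3r}\Theta_{\xi'}(q)$. For the base case $r=1$, formula (\ref{DT1l}) with $S=\mathbb{P}^2$ (so $\chi(\oO_S)=1$ and $\chi(S)=3$) gives $\DT(1,l)=q^{\frac18}\eta(q)^{-3}$, so I take $\xi'$ to be the rank-zero classical data, for which $\Theta_{\xi'}=1$ and $M=1\in\mM$.

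Before the inductive step I would record two elementary facts. First, each $\vartheta_{r,a}$ is itself a classical theta series: writing $\vartheta_{r,a}(q)=\sum_{\nu}q^{Q(\nu)}$ over $\nu\in(a/r,\dots,a/r)+\mathbb{Z}^{r-1}$, the quadratic form $\sum_{1\le i\le j\le r-1}k_ik_j$ has Gram matrix $I+J$ (identity plus all-ones), which is positive definite and integral; hence this is classical data and $\vartheta_{r,a}\in\mM$. Second, a finite product of classical theta series is again the theta series of classical data, by the identity $\Theta_{\xi_1}(q)\Theta_{\xi_2}(q)=\Theta_{\xi_1\oplus\xi_2}(q)$ from the proof of Lemma~\ref{lem:element}, since the orthogonal direct sum of positive-definite data is positive definite. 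I would also stress that $\mM$ is a $\mathbb{Q}$-subalgebra but is \emph{not} closed under inversion.

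For the inductive step, fix $r\ge 2$ and apply the rank reduction formula, Proposition~\ref{prop:reduction}. Every composition occurring there has $m\ge 2$, so each $r_i<r$, and the inductive hypothesis lets me write $\DT(r_i,l_i)=q^{r_i/8}\eta(q)^{-3r_i}\Theta_{\xi'_i}(q)^{-1}M_i$ with $\xi'_i$ classical and $M_i\in\mM$. Since $\sum_i r_i=r$, the prefactors multiply to $q^{r/8}\eta(q)^{-3r}$, and substituting into (\ref{formula:reduction}) yields
\begin{align*}
q^{-\frac{r}{8}}\eta(q)^{3r}\,\vartheta_{r,1-l}(q)\,\DT(r,l)
=\sum_{T}\frac{(-1)^{m_T}}{2^{m_T-1}}\,U^{l,G_T}(q)\,
\prod_i\vartheta_{r_i,a_i}(q)\,\prod_i M_i\cdot D_T^{-1},
\end{align*}
where $T$ ranges over the finitely many terms (compositions, classes $\overline{\beta}_i\in\NS_{<r_i}(\wP)$, graphs $G\in G(m)$), $U^{l,G_T}$ is the series (\ref{W(q)}), and $D_T\cneq\prod_i\Theta_{\xi'_i}(q)$ is the classical theta denominator of term $T$. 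Let $D\cneq\prod_T D_T$, a finite product of classical theta series, hence $D=\Theta_{\xi''}$ for classical data $\xi''$. Multiplying the displayed identity by $D$ clears every denominator: each $D/D_T=\prod_{T'\ne T}D_{T'}\in\mM$, while $U^{l,G_T}\in\mM$ by Proposition~\ref{prop:SU}, $\vartheta_{r_i,a_i}\in\mM$, and $M_i\in\mM$; thus the right-hand side lies in $\mM$. Taking $\xi'$ to be the classical data with $\Theta_{\xi'}=\vartheta_{r,1-l}\cdot D$ (classical, being a product of classical thetas) gives $q^{-\frac{r}{8}}\eta(q)^{3r}\Theta_{\xi'}(q)\DT(r,l)\in\mM$, closing the induction.

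I expect the only genuine point of care inside this corollary—as opposed to the substantive inputs Proposition~\ref{prop:reduction} and Proposition~\ref{prop:SU}, which are already available—to be the handling of the inverse theta factors: both $\vartheta_{r,1-l}^{-1}$ and the denominators $\Theta_{\xi'_i}^{-1}$ coming from the inductive hypothesis cannot be absorbed into $\mM$ directly, since $\mM$ is not closed under inversion. The remedy is to pre-multiply by one common denominator, and the structural fact that makes this work is precisely that a finite product of classical theta series is again $\Theta_{\xi'}$ for a single piece of classical data, which is exactly the form the statement demands.
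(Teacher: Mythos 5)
Your proof is correct and follows essentially the same route as the paper: induction on $r$ with base case (\ref{DT1l}), using Proposition~\ref{prop:reduction} and Proposition~\ref{prop:SU} together with the observation that $\vartheta_{r,a}(q)$ is a classical theta series. The paper's own proof is a single sentence that leaves implicit the denominator-clearing step you spell out (combining $\vartheta_{r,1-l}$ with the common denominator $\prod_T D_T$ into one classical $\Theta_{\xi'}$ via direct sum of data, since $\mM$ is not closed under inversion); your elaboration of that point is accurate and is exactly what the terse argument relies on.
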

\begin{proof}
The case of $r=1$ follows from (\ref{DT1l}). 
The case of $r\ge 2$ follows from the induction of $r$ 
by Proposition~\ref{prop:SU} and Proposition~\ref{prop:reduction}, noting 
that $\vartheta_{r, a}(q)$ is a classical theta series.  
\end{proof}

\subsection{Proof of Proposition~\ref{prop:SU} for (\ref{S(q)}).}
\label{subsec:propS}
In this subsection, we show that
\begin{align}\label{SinM}
S_{(r_1, \overline{\beta}_1), \cdots, (r_m, \overline{\beta}_m)}^{l, G}(q) 
\in \mM. 
\end{align}
We first prepare the following lemma: 
\begin{lem}\label{lem:F0}
For $l\in \mathbb{Z}$, 
there
are no $r_1, r_2 \in \mathbb{Z}_{\ge 1}$, 
$\beta_1, \beta_2 \in \NS(\wP)$ such that
$\beta_1+\beta_2=(l, 1-l)$ and 
\begin{align}\label{ratio}
\frac{\beta_1}{r_1} \cdot F =\frac{\beta_2}{r_2} \cdot F.
\end{align}
\end{lem}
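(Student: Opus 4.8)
The plan is to reduce the statement to an elementary integrality obstruction on the fiber degrees $\beta_i \cdot F$. First I would record the intersection numbers on $\wP$: since $\wP$ is the blow-up of $\mathbb{P}^2$ at a point, we have $(f^{\ast}H)^2=1$, $f^{\ast}H \cdot C=0$ and $C^2=-1$. With $F=f^{\ast}H-C$ this gives, for an element $\beta=(x,y)=xf^{\ast}H+yC$ of $\NS(\wP)$, the formula
\begin{align*}
\beta \cdot F=(xf^{\ast}H+yC)\cdot(f^{\ast}H-C)=x+y.
\end{align*}
In particular $\beta_i \cdot F$ is an integer for any $\beta_i \in \NS(\wP)$.

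Next I would extract the constraint coming from $\beta_1+\beta_2=(l,1-l)$. Intersecting both sides with $F$ gives
\begin{align*}
\beta_1 \cdot F+\beta_2 \cdot F=l+(1-l)=1.
\end{align*}
Now I would combine this with the hypothesis (\ref{ratio}). Writing $\mu$ for the common value $\frac{\beta_1 \cdot F}{r_1}=\frac{\beta_2 \cdot F}{r_2}$, we have $\beta_i \cdot F=\mu r_i$ for $i=1,2$, so summing yields $\mu(r_1+r_2)=1$, hence $\mu=1/(r_1+r_2)$. Therefore $\beta_1 \cdot F=r_1/(r_1+r_2)$, which lies strictly between $0$ and $1$ because $r_1,r_2 \ge 1$. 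This contradicts the integrality of $\beta_1 \cdot F$ noted above, so no such data $(r_1,r_2,\beta_1,\beta_2)$ can exist.

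There is no genuine obstacle in this argument; the only point requiring care is fixing the intersection numbers on $\wP$ correctly and observing that the fiber degrees $\beta_i \cdot F$ are forced to be integers. The essential content is simply that the total fiber degree equals the odd value $1$, which cannot be partitioned into two equal slopes $\beta_i \cdot F/r_i$ with positive integer ranks $r_1,r_2$.
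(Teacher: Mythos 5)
Your proof is correct and follows essentially the same route as the paper: both arguments intersect the relation $\beta_1+\beta_2=(l,1-l)$ with $F$, use $(l,1-l)\cdot F=1$ to deduce $\beta_1\cdot F=r_1/(r_1+r_2)$, and conclude by the integrality of $\beta_1\cdot F$. Your explicit verification of the intersection numbers on $\wP$ is a harmless elaboration of the same idea.
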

\begin{proof}
Suppose that there exist such $(r_i, \beta_i)$. 
By substituting $\beta_2=(l, 1-l) -\beta_1$ into 
(\ref{ratio}), and noting $(l, 1-l) \cdot F=1$, 
we obtain 
$\beta_1 \cdot F=r_1/(r_1+r_2)$. Since the LHS is an integer
and the RHS is not an integer, this is a contradiction. 
\end{proof}
We describe the series (\ref{S(q)}) in terms of 
the theta type series in Subsection~\ref{subsec:theta}. 
In the sum (\ref{S(q)}), we set 
\begin{align}\label{ui}
\beta_i=\overline{\beta}_i+r_i u_i
\end{align}
for $u_i \in \NS(\wP)$ and 
\begin{align}\notag
\nu_i &\cneq \frac{\beta_i}{r_i}-\frac{\beta_{i+1}}{r_{i+1}} \\
\label{numu}
&= \overline{\nu}_i
 + u_i -u_{i+1} 
\end{align}
for $1\le i\le m-1$. 
Here we have set
\begin{align*}
\overline{\nu}_i = 
\frac{\overline{\beta}_i}{r_i} - \frac{\overline{\beta}_{i+1}}{r_{i+1}}. 
\end{align*}
Then we have 
\begin{align}\label{nui}
\nu_i \in \overline{\nu}_{i} +\NS(\wP).
\end{align}
Conversely given $\nu_i$ for $1\le i\le m-1$ as in 
(\ref{nui}), 
we can write $u_i$ satisfying (\ref{numu})
as follows: 
\begin{align*}
u_i=u_1 -(\nu_1 -\overline{\nu}_1) - \cdots -
 (\nu_{i-1} -\overline{\nu}_{i-1}). 
\end{align*}
By substituting into (\ref{ui}) and 
$\beta_1 + \cdots + \beta=\beta$, 
where $\beta=(l, 1-l)$, we have
\begin{align*}
\beta-\overline{\beta}
+ \sum_{i=1}^{m-1} \sum_{j=i+1}^{m} r_j (\nu_{i}-\overline{\nu}_i) = ru_1. 
\end{align*}
Here we have set 
$\overline{\beta}=\overline{\beta}_1+ \cdots + \overline{\beta}_m$. 
Therefore the necessary and sufficient condition for $\nu_i$ in (\ref{nui})
to have the solution $(u_1, \cdots, u_m) \in \NS(\wP)^{\times m}$ is 
\begin{align*}
\beta-\overline{\beta}
+ \sum_{i=1}^{m-1} \sum_{j=i+1}^{m} r_j (\nu_{i}-\overline{\nu}_i)
\in r \NS(\wP). 
\end{align*}
On the other hand, for each $1\le i\le m-1$, we have
\begin{align}\label{sgn}
&\sgn \left( F \cdot \left( \frac{\beta_1+ \cdots + \beta_i}{r_1+ \cdots + r_i} -\frac{\beta_{i+1}+ \cdots + \beta_m}{r_{i+1}+ \cdots + r_m} \right)
\right) \\
\notag
&=\sgn \left( F \cdot \sum_{k\le i<j} (\beta_k r_j -\beta_j r_k)
 \right)  \\
\notag
&=\sgn \left( F\cdot 
\sum_{k\le i<j}r_j r_k(\nu_k + \nu_{k+1} + \cdots + \nu_{j-1})
 \right). 
\end{align}
By Lemma~\ref{lem:F0}, 
the value (\ref{sgn}) is non-zero 
for $(r_i, \beta_i)$ in the series (\ref{S(q)}). 
Therefore
the series (\ref{S(q)}) is written as 
\begin{align}\label{S1q}
&\prod_{i \to j \text{ in }G} r_i r_j 
\cdot \frac{1}{2^{m-1}}\sum_{\begin{subarray}{c}
\nu_i \in \overline{\nu}_i + \NS(\wP), \ 1\le i\le m-1 \\
\beta-\overline{\beta}
+ \sum_{i=1}^{m-1} \sum_{j=i+1}^{m} r_j (\nu_{i}-\overline{\nu}_i)
\in r \NS(\wP)
\end{subarray}} \\
\notag
&\prod_{i=1}^{m-1}
\left( \sgn(H_0 \cdot \nu_i) - \delta_{0, H_0 \cdot \nu_i} -
\sgn \left(F \cdot \sum_{k\le i<j}r_j r_k(\nu_k + \nu_{k+1} + \cdots + \nu_{j-1})
 \right)  \right) \\
\notag
& \cdot \prod_{i \to j \text{ in } G} K_{\wP}(\nu_i+\nu_{i+1}+
 \cdots + \nu_{j-1}) \cdot 
q^{-\sum_{1\le i<j\le m}\frac{r_i r_j(\nu_i+\nu_{i+1}+
 \cdots + \nu_{j-1})^2}{2r}}. 
\end{align}
We set
\begin{align*}
\Gamma=\left\{(\nu_1, \cdots, \nu_{m-1}) \in \NS(\wP)^{\times m-1} : 
\sum_{i=1}^{m-1} \sum_{j=i+1}^{m} r_j \nu_{i}
\in r \NS(\wP) \right\}. 
\end{align*}
Let $\overline{\nu}'$ be one of 
$(\nu_1, \cdots, \nu_{m-1})$ in the series (\ref{S1q}). 
Since $\Gamma \subset \NS(\wP)^{\times m-1}$ is of finite index, 
we have $\overline{\nu}' \in \Gamma_{\mathbb{Q}}$. 
By expanding,  
the series (\ref{S1q}) is a linear combination of the series
\begin{align}\notag
&\sum_{\begin{subarray}{c}
\nu \in \overline{\nu}' + \Gamma \\
H_0 \cdot \nu_i=0 \\
\text{ for all } i\in T
\end{subarray}} \prod_{i\notin T}
\left( \sgn(H_0 \cdot \nu_i) -
\sgn \left(F \cdot \sum_{k\le i<j}r_j r_k(\nu_k + \nu_{k+1} + 
\cdots + \nu_{j-1})
 \right)  \right) \\
\label{S2q}
& \hspace{13mm}
\prod_{i \to j \text{ in } G} K_{\wP}(\nu_i+\nu_{i+1}+
 \cdots + \nu_{j-1}) \cdot 
q^{-\sum_{1\le i<j\le m}\frac{r_i r_j(\nu_i+\nu_{i+1}+
 \cdots + \nu_{j-1})^2}{2r}}. 
\end{align}
Here $T \subset \{1, \cdots, m-1\}$ is a subset. 
By Lemma~\ref{lem:classical}, it is enough
to show that the series (\ref{S2q}) with $T=\emptyset$ 
is written as 
$\Theta_{\xi}(q^{\frac{1}{r}})$ for data $\xi$ as in (\ref{xi}). 

Let $A=\{a_{ij}\}_{1\le i, j \le m-1}$ be the $(m-1)\times (m-1)$-matrix
given by
\begin{align*}
a_{ij}=\left\{ \begin{array}{cl} -\sum_{k\le i, j<l}r_l r_k & (i\le j) \\
a_{ji} & (i>j).
\end{array}
\right.
\end{align*}
We define the integer valued
symmetric bilinear pairing $B(-, -)$ on $\Gamma$
by 
\begin{align*}
B(\nu, \nu')=\nu \cdot A \cdot {}^t \nu'. 
\end{align*}
Here we regard an element $\nu \in \Gamma$
as a row vector $(\nu_1, \cdots, \nu_{m-1})$
in $\NS(\wP)^{\times m-1}$. 
It is straightforward to check that 
\begin{align}\notag
Q(\nu) &\cneq \frac{B(\nu, \nu)}{2} \\
\label{Qnu}
&=
-\sum_{1\le i<j\le m}\frac{r_i r_j(\nu_i+\nu_{i+1}+ \cdots + \nu_{j-1})^2}{2}.
\end{align}
Since $\NS(\wP)$ with its intersection 
form is a lattice with index $(1, 1)$, 
it follows that $(\Gamma, B(-, -))$ is a non-degenerate lattice
with index $(m-1, m-1)$. In particular, we have $\det A \neq 0$. 

We set $c_i$, $c_i' \in \Gamma_{\mathbb{Q}}$ for $1\le i\le m-1$
as follows:
\begin{align}\label{ci}
&c_i=(0, \cdots, 0, \stackrel{i}{H_0}, 0, \cdots, 0) A^{-1} \\
\notag
&c_i'=(0, \cdots, 0, \stackrel{i}{-F}, 0, \cdots, 0). 
\end{align}
Here $\stackrel{i}{\ast}$ means that 
$\ast$ is located on the $i$-th column. 
Let $E(G)$ be the set of arrows in $G$, and take $e=(i \to j) \in E(G)$. 
Since $(\Gamma, B)$ is non-degenerate, 
there exists $\alpha_e \in \Gamma_{\mathbb{Q}}$ such that
\begin{align*}
B(\alpha_e, \nu)=K_{\wP}(\nu_i+\nu_{i+1}+ \cdots + \nu_{j-1})
\end{align*}
for any $\nu \in \Gamma_{\mathbb{Q}}$. 
By the above constructions, 
the series (\ref{S2q})
with $T=\emptyset$ is written 
in the following way:
\begin{align*}
\sum_{\nu \in \overline{\nu}' + \Gamma}
\prod_{i=1}^{m-1}
\left( \sgn (B(c_i, \nu))-
\sgn ( B(c_i', \nu) ) \right)
\prod_{e \in E(G)}
B(\alpha_e, \nu)
 \cdot q^{\frac{Q(\nu)}{r}}. 
\end{align*} 
Hence the following lemma shows that (\ref{SinM}) holds:
\begin{lem}
The data
\begin{align*}
(\Gamma, B, \overline{\nu}', 
c_1, \cdots, c_{m-1}, c_{1}', \cdots, c_{m-1}', \alpha_{e}, 
e\in E(G))
\end{align*}
satisfies the conditions (i) to (v) in Subsection~\ref{subsec:theta}. 
\end{lem}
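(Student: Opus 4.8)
The plan is to check conditions (i)--(v) of Subsection~\ref{subsec:theta} in turn. Conditions (i) and (v) have essentially already been recorded just before the lemma: $\Gamma$ is a finite-index subgroup of $\NS(\wP)^{\times(m-1)}$, hence free of finite rank; $B$ is symmetric since $A$ is, and its non-degeneracy and index $(m-1,m-1)$ (so $a=b=m-1$ and $a\ge b$) were deduced from $\det A\neq 0$; and each $\alpha_e\in\Gamma_{\mathbb{Q}}$ exists precisely because $B$ is non-degenerate, with $k=|E(G)|\ge 0$. So the real content is (ii)--(iv), and everything reduces to two pairing formulas together with the intersection numbers $H_0^2=1$, $H_0\cdot F=1$ and $F^2=0$ on $\wP$. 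Writing $c_i=(0,\dots,0,\stackrel{i}{H_0},0,\dots,0)A^{-1}$ and $c_i'=(0,\dots,0,\stackrel{i}{-F},0,\dots,0)$ as in (\ref{ci}), and using that $B(\mu,\nu)=\mu\,A\,{}^{t}\nu$ combines $A$ with the intersection form on $\NS(\wP)$, I would first establish, for arbitrary $\nu\in\Gamma$,
\begin{align*}
B(c_i,\nu)=H_0\cdot\nu_i,\qquad
B(c_i',\nu)=F\cdot\sum_{k\le i<j}r_kr_j(\nu_k+\cdots+\nu_{j-1}).
\end{align*}
The first is immediate from $A^{-1}A=\mathrm{id}$; the second follows by reading $B$ off from (\ref{Qnu}) as $B(\nu,\mu)=-\sum_{a<b}r_ar_b\langle\nu_a+\cdots+\nu_{b-1},\,\mu_a+\cdots+\mu_{b-1}\rangle$ and evaluating at $\mu=c_i'$. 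These are exactly the arguments of the two sign functions in (\ref{sgn}) and (\ref{S2q}), which is why these $c_i,c_i'$ were chosen.

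Given these formulas, conditions (ii) and (iii) are short. Setting $\nu=c_j$ in the first identity gives $B(c_i,c_j)=(A^{-1})_{ij}$, so the Gram matrix of $c_1,\dots,c_{m-1}$ is $A^{-1}$; this is negative definite because substituting scalars $x=(x_1,\dots,x_{m-1})$ into (\ref{Qnu}) yields $x\,A\,{}^{t}x=-\sum_{1\le i<j\le m}r_ir_j(x_i+\cdots+x_{j-1})^2$, strictly negative unless $x=0$. Hence the $c_i$ span a negative definite $(m-1)$-dimensional subspace, proving (ii). Setting $\nu=c_j'$ in the first identity gives $B(c_i,c_j')=-\delta_{ij}(H_0\cdot F)=-\delta_{ij}$, which furnishes both $B(c_i,c_j')=0$ for $i\neq j$ and $B(c_i,c_i')=-1<0$; and since $B(c_i',\nu)$ is an intersection of $F$ with an $\NS(\wP)$-class that is a multiple of $F$ when $\nu=c_j'$, we obtain $B(c_i',c_j')\in\mathbb{Z}\cdot F^2=0$. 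This proves (iii).

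The one substantive step is condition (iv): I must show $B(c_i',\nu)\neq 0$ for every $\nu\in\overline{\nu}'+\Gamma$ and every $1\le i\le m-1$. By the second pairing formula this is precisely the non-vanishing of the quantity whose sign appears in (\ref{sgn}). Here I would use the bijection set up before (\ref{S1q}) between elements $\nu\in\overline{\nu}'+\Gamma$ and tuples $(\beta_1,\dots,\beta_m)$ with $\beta_i\equiv\overline{\beta}_i\ (\mathrm{mod}\ r_i)$ and $\beta_1+\cdots+\beta_m=(l,1-l)$: under it, $B(c_i',\nu)$ becomes a positive multiple of $F\cdot\left(\frac{\beta_1+\cdots+\beta_i}{r_1+\cdots+r_i}-\frac{\beta_{i+1}+\cdots+\beta_m}{r_{i+1}+\cdots+r_m}\right)$, and Lemma~\ref{lem:F0}, applied to these two partial sums, says it is never zero. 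I expect this to be the only real obstacle: one must confirm that the correspondence covers the entire coset $\overline{\nu}'+\Gamma$, so that Lemma~\ref{lem:F0} applies uniformly, whereas the other conditions are bookkeeping once the two pairing formulas are in place.
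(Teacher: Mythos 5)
Your proof is correct and follows essentially the same route as the paper: (i) and (v) are immediate from the preceding construction, (ii) follows from the negative definiteness encoded in (\ref{Qnu}), (iii) from the direct computation using $H_0\cdot F=1$ and $F^2=0$, and (iv) from Lemma~\ref{lem:F0} applied through the correspondence between the coset $\overline{\nu}'+\Gamma$ and the tuples $(\beta_1,\cdots,\beta_m)$. The only cosmetic difference is that you establish (ii) via the Gram matrix $A^{-1}$ of the $c_i$ (using that $A$ itself is negative definite), whereas the paper evaluates $Q$ directly on the span of the $c_i$; the substance is identical.
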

\begin{proof}
The condition (i) is already stated. 
Let $V \subset \Gamma_{\mathbb{Q}}$ be the
sub $\mathbb{Q}$-vector space spanned by 
$c_i$ for $1\le i\le m-1$. 
By (\ref{ci}), $V$ is $m-1$-dimensional, and 
\begin{align*}
V=\bigoplus_{i=1}^{m-1} 
\mathbb{Q}\cdot (0, \cdots, 0, \stackrel{i}{H_0}, 0, \cdots, 0).
\end{align*}
By (\ref{Qnu}), it follows that 
$Q$ is negative definite on $V$, hence
the condition (ii) holds. The condition (iii) follows from 
\begin{align*}
B(c_i, c_j')&=(0, \cdots, \stackrel{i}{H_0}, \cdots, 0) \cdot 
{}^t (0, \cdots, \stackrel{j}{-F}, \cdots, 0) \\
&=-\delta_{ij} \\
B(c_i', c_j')&=
(0, \cdots, \stackrel{i}{-F}, 
\cdots, 0) \cdot {}^t (b_1 F, \cdots, b_{m-1}F) \\
&=0. 
\end{align*}
Here $b_1, \cdots, b_{m-1}$ are some rational numbers, 
and the last equality follows from $F^2=0$. 
The condition (iv) follows from Lemma~\ref{lem:F0}, 
and there is nothing to prove for (v).  
\end{proof}

\subsection{Proof of Proposition~\ref{prop:SU} for (\ref{W(q)})}
\label{subsec:propU}
We finish the proof of Proposition~\ref{prop:SU} by proving that 
\begin{align}\label{WinM}
U_{(r_1, \overline{\beta}_1), \cdots, (r_m, \overline{\beta}_m)}^{l, G}(q) 
\in \mM. 
\end{align}
By Lemma~\ref{lem:F0}
and (\ref{m''}), 
the rational number 
$U(\{(r_i, \beta_i)\}_{i=1}^{m}, H, F_{+})$
in the RHS of (\ref{W(q)}) does not 
contain contributions of the terms in (\ref{defi:U}) with $m'' \ge 2$. 
For data (\ref{data:W}) and a fixed non-decreasing surjection
\begin{align*}
\psi \colon \{1, \cdots, m \} \twoheadrightarrow \{1, \cdots, m'\}
\end{align*}
we consider the series:
\begin{align}\notag
S_{(r_1, \overline{\beta}_1), \cdots, 
(r_m, \overline{\beta}_m)}^{l, G, \psi}
\cneq \sum_{\begin{subarray}{c}
\beta_j \in \NS(\wP), \
\beta_j \equiv \overline{\beta}_j \ (\text{\rm{mod} } r_j) \\
1\le j\le m, \
\beta_1 + \cdots + \beta_m=(l, 1-l) \\
\beta_j \cdot H_0/r= \beta_k \cdot H_0/r \text{ if }
\psi(j)=\psi(k)
\end{subarray}}
S(\{(R_i, \bB_i) \}_{i=1}^{m'}, H, F_{+} ) \\
\label{W(q)2}
\cdot \prod_{i \to j \text{ in } G} K_{\wP}(r_j \beta_i - r_i \beta_j) q^{-\sum_{1\le i<j \le m} \frac{(r_j \beta_i -r_i \beta_j)^2}{2r r_i r_j}}.
\end{align}
Here $(R_i, \bB_i)$ is given by 
\begin{align}\label{dag}
R_i=
\sum_{j \in \psi^{-1}(i)}r_j, \quad
\bB_i=\sum_{j \in \psi^{-1}(i)}\beta_j.
\end{align}
Since (\ref{W(q)}) is a $\mathbb{Q}$-linear combination of the series 
(\ref{W(q)2}), 
it is enough to show that (\ref{W(q)2}) 
is an element of $\mM$.  

Let us consider $(\beta_1, \cdots, \beta_m)$ in the RHS of (\ref{W(q)2}). 
For each $1\le j \le m$
with $\psi(j)=i$, 
we can write
\begin{align}\label{cond:write}
\frac{\beta_j}{r_j}=\frac{\bB_i}{R_i} + l_j C
\end{align}
for some $l_j \in \mathbb{Q}$. 
For $1\le i\le m'$, let 
$\overline{\bB}_{i} \in \NS_{<R_i}(\wP)$
be the unique element such that 
\begin{align*}
\bB_i \equiv \overline{\bB}_i \ (\text{\rm{mod} } R_i).
\end{align*}
Then we have
\begin{align}\label{cond:mj}
l_j C \in \frac{\overline{\beta}_j}{r_j} -
\frac{\overline{\bB}_i}{R_i} +\NS(\wP). 
\end{align}
By applying $\cdot H_0$ and $\cdot C$, the condition (\ref{cond:mj})
is equivalent to the two conditions:
\begin{align*}
\frac{\overline{\bB}_i}{R_i} \cdot H_0
\in \frac{\overline{\beta}_j}{r_j} \cdot H_0 + \mathbb{Z}, \quad 
l_j \in \frac{\overline{\bB}_i}{R_i} \cdot C
-\frac{\overline{\beta}_j}{r_j} \cdot C + \mathbb{Z}. 
\end{align*}
Also using (\ref{cond:write}), the condition (\ref{dag}) for $\bB_i$ 
is equivalent to 
\begin{align*}
\sum_{j \in \psi^{-1}(i)} r_j l_j=0. 
\end{align*}
Using (\ref{cond:write})
and noting $K_{\wP} \cdot C=-1$, we have
\begin{align*}
\prod_{i \to j \text{ in }G}
K_{\wP}(r_j \beta_i - r_i \beta_j) = 
&\prod_{i\to j \text{ in }G} r_i r_j \cdot 
\sum_{\begin{subarray}{c}G', G'' \subset G \\
E(G') \coprod E(G'')=E(G)
\end{subarray}} \\
&\prod_{i \to j \text{ in }G'}(l_j -l_i) 
\cdot \prod_{ i\to j \text{ in } \psi(G'')}
K_{\wP} \left( \frac{\bB_i}{R_i}
-\frac{\bB_j}{R_j}\right). 
\end{align*}
Here $G', G'' \subset G$ are oriented 
subgraphs, $E(G)$ is the set of arrows in $G$, 
and $\psi(G'') \in G'(m')$ is obtained 
by the image of $G''$ under $\psi$. 
Also using (\ref{noting}) and setting $\beta=(l, 1-l)$, 
the power of $q$ in (\ref{W(q)2}) 
is written as 
\begin{align*}
&\frac{\beta^2}{2r}-\sum_{i=1}^{m}
\frac{\beta_i^2}{2r_i} \\
&=\frac{\beta^2}{2r}-\sum_{i=1}^{m'}
\frac{\bB_i^2}{2R_i} + 
\sum_{i=1}^{m'}\left( \frac{\bB_i^2}{2R_i} - \sum_{j \in \psi^{-1}(i)}
\frac{\beta_j^2}{2r_j} \right) \\
&=-\sum_{1\le i<j\le m'} \frac{(R_j\bB_i -
R_i \bB_j)^2 }{2r R_i R_j}
+\sum_{i=1}^{m'} \sum_{j<k \text{ in } \psi^{-1}(i)}
\frac{r_j r_k(l_k-l_j)^2}{2R_i}. 
\end{align*}
Combing the above calculations, the 
series (\ref{W(q)2}) is written as
\begin{align*}
&\prod_{i \to j \text{ in }G} r_i r_j \cdot 
\sum_{\begin{subarray}{c}G', G'' \subset G \\
E(G') \coprod E(G'')=E(G)
\end{subarray}}
\sum_{\begin{subarray}{c}
\overline{\bB}_i \in \NS_{< R_i}(\wP), \ 1\le i\le m' \\
\overline{\bB}_i \cdot H_0/R_i
\in \overline{\beta}_j \cdot H_0/r_j+ \mathbb{Z}
\text{ for all } j \in \psi^{-1}(i) 
\end{subarray}} \\
& \cdot 
\sum_{\begin{subarray}{c}
l_j \in \overline{\bB}_{\psi(j)} \cdot C/R_{\psi(j)}
-\overline{\beta}_j \cdot C/r_j + \mathbb{Z}, \
1\le j\le m \\
\sum_{j \in \psi^{-1}(i)}r _j l_j=0 \text{ for all }
1\le i\le m'
\end{subarray}}
\prod_{i \to j \text{ in } G'}
(l_j-l_i)  \\
& \hspace{60mm}
 q^{\sum_{i=1}^{m'} \sum_{j<k \text{ in } \psi^{-1}(i)}
\frac{r_j r_k(l_k-l_j)^2}{2R_i}}  \\
&\cdot \sum_{\begin{subarray}{c}
\bB_i \in \NS(\wP), \ \bB_i \equiv \overline{\bB}_i \
(\text{mod } R_i) \\
\bB_1+ \cdots + \bB_{m'}=(l, 1-l) 
\end{subarray}}
S(\{(R_i, \bB_i)\}_{i=1}^{m'}, H, F_{+})
\prod_{ i\to j \text{ in } \psi(G'')}
K_{\wP} \left( \frac{\bB_i}{R_i}
-\frac{\bB_j}{R_j}\right) \\
& \hspace{70mm} 
q^{-\sum_{1\le i<j\le m'} \frac{(R_j\bB_i -
R_i \bB_j)^2 }{2r R_i R_j}}. 
\end{align*}
By Lemma~\ref{lem:classical}, the series 
\begin{align*}
&\sum_{\begin{subarray}{c}
l_j \in \overline{\bB}_{\psi(j)} \cdot C/R_{\psi(j)}
-\overline{\beta}_j \cdot C/r_j + \mathbb{Z}, \
1\le j\le m \\
\sum_{j \in \psi^{-1}(i)}r _j l_j=0 \text{ for all }
1\le i\le m'
\end{subarray}}
\prod_{i \to j \text{ in } G'}
(l_j-l_i)  \\
& \hspace{60mm}
 q^{\sum_{i=1}^{m'} \sum_{j<k \text{ in } \psi^{-1}(i)}
\frac{r_j r_k(l_k-l_j)^2}{2R_i}}
\end{align*}
is an element of $\mM$. Combined with (\ref{SinM}), 
we conclude that (\ref{W(q)2}) is an element of $\mM$. 

\providecommand{\bysame}{\leavevmode\hbox to3em{\hrulefill}\thinspace}
\providecommand{\MR}{\relax\ifhmode\unskip\space\fi MR }
% \MRhref is called by the amsart/book/proc definition of \MR.
\providecommand{\MRhref}[2]{%
  \href{http://www.ams.org/mathscinet-getitem?mr=#1}{#2}
}
\providecommand{\href}[2]{#2}

Kavli Institute for the Physics and 
Mathematics of the Universe, University of Tokyo,
5-1-5 Kashiwanoha, Kashiwa, 277-8583, Japan.

\textit{E-mail address}: yukinobu.toda@ipmu.jp

\end{document}